\numberwithin{equation}{section}
\DeclareFontShape{T1}{lmr}{b}{sc}{<->ssub*cmr/bx/sc}{}
\DeclareFontShape{T1}{lmr}{bx}{sc}{<->ssub*cmr/bx/sc}{}
\newcommand{\abs}[1]{\lvert#1\rvert}
\newcommandx{\unsure}[2][1=]{\todo[linecolor=red,backgroundcolor=red!25,bordercolor=red,#1]{#2}}
\newcommandx{\change}[2][1=]{\todo[linecolor=blue,backgroundcolor=blue!25,bordercolor=blue,#1]{#2}}
\newcommandx{\info}[2][1=]{\todo[linecolor=OliveGreen,backgroundcolor=OliveGreen!25,bordercolor=OliveGreen,#1]{#2}}
\newcommandx{\improvement}[2][1=]{\todo[linecolor=black,backgroundcolor=black!25,bordercolor=black,#1]{#2}}
\newcommandx{\thiswillnotshow}[2][1=]{\todo[disable,#1]{#2}}
\crefname{proposition}{Proposition}{Propositions}
\crefname{equation}{}{}
\newtheorem{theorem}{Theorem}[section]
\newtheorem{lemma}[theorem]{Lemma}
\newtheorem{proposition}[theorem]{Proposition}
\newtheorem{corollary}[theorem]{Corollary}
\theoremstyle{definition}
\newtheorem{definition}[theorem]{Definition}
\crefname{assumption}{Assumption}{Assumptions}
\crefname{definition}{Definition}{Definitions}
\crefname{corollary}{Corollary}{Corollaries}
\crefname{enumi}{item}{items}
\DeclareMathOperator{\N}{\mathbb{N}}
\DeclareMathOperator{\Z}{\mathbb{Z}}
\DeclareMathOperator{\R}{\mathbb{R}}
\DeclareMathOperator{\C}{\mathbb{C}}
\DeclareMathOperator{\tr}{tr}
\renewcommand{\i}{\mathbf{i}}
\renewcommand{\tilde}{\widetilde}
\renewcommand{\bar}[1]{\overline{#1}}
\newcommand{\inv}{^{-1}}
\DeclareMathOperator{\dtn}{\mathcal{T}}
\DeclareMathOperator{\diag}{diag}
\DeclareMathOperator{\BO}{\mathcal{O}}
\DeclareMathOperator{\exactcapmat}{\mathbf{C}}
\DeclareMathOperator{\capmat}{\mathcal{C}}
\DeclareMathOperator{\capmatg}{\mathcal{C}^\gamma}
\DeclareMathOperator{\qpcapmatg}{\mathcal{C}^{\gamma,\alpha}}
\newcommand{\pri}{^\prime}
\newcommand{\prii}{^{\prime\prime}}
\renewcommand{\epsilon}{\varepsilon}
\DeclareMathOperator{\dd}{d\!}
\renewcommand{\i}{\mathbf{i}}
\renewcommand{\tilde}{\widetilde}
\DeclareMathOperator{\Ind}{Ind}
\renewcommand{\sp}{\mathrm{sp}}
\DeclareMathOperator{\iL}{{\mathsf{L}}}
\DeclareMathOperator{\iR}{{\mathsf{R}}}
\DeclareMathOperator{\iLR}{{\mathsf{L},\mathsf{R}}}
\DeclareMathOperator{\csch}{csch}
\DeclareMathOperator*{\argmin}{argmin}
\begin{document}

\title[Mathematical foundations of the skin effect]{Mathematical foundations of the non-Hermitian skin effect}

 \author[H. Ammari]{Habib Ammari}
\address{\parbox{\linewidth}{Habib Ammari\\
 ETH Z\"urich, Department of Mathematics, Rämistrasse 101, 8092 Z\"urich, Switzerland}}
\email{habib.ammari@math.ethz.ch}
\thanks{}

\author[S. Barandun]{Silvio Barandun}
 \address{\parbox{\linewidth}{Silvio Barandun\\
 ETH Z\"urich, Department of Mathematics, Rämistrasse 101, 8092 Z\"urich, Switzerland}}
 \email{silvio.barandun@sam.math.ethz.ch}

\author[J. Cao]{Jinghao Cao}
 \address{\parbox{\linewidth}{Jinghao Cao\\
 ETH Z\"urich, Department of Mathematics, Rämistrasse 101, 8092 Z\"urich, Switzerland}}
\email{jinghao.cao@sam.math.ethz.ch}

\author[B. Davies]{Bryn Davies}
 \address{\parbox{\linewidth}{Bryn Davies\\
Department of Mathematics, Imperial College London, 180 Queen's Gate, London SW7~2AZ, UK}}
\email{bryn.davies@imperial.ac.uk}

 \author[E.O. Hiltunen]{Erik Orvehed Hiltunen}
\address{\parbox{\linewidth}{Erik Orvehed Hiltunen\\
Department of Mathematics, Yale University, 10 Hillhouse Ave,
New Haven, CT~06511, USA}}
\email{erik.hiltunen@yale.edu}

\maketitle

\begin{abstract}
We study the skin effect in a one-dimensional system of finitely many subwavelength resonators with a non-Hermitian imaginary gauge potential. Using Toeplitz matrix theory, we prove the condensation of bulk eigenmodes at one of the edges of the system. By introducing a generalised (complex) Brillouin zone, we can compute spectral bands of the associated infinitely periodic structure and prove that this is the limit of the spectra of the finite structures with arbitrarily large size. Finally, we contrast the non-Hermitian systems with imaginary gauge potentials considered here with systems where the non-Hermiticity arises due to complex material parameters, showing that the two systems are fundamentally distinct. 
\end{abstract}

\date{}

\bigskip

\noindent \textbf{Keywords.}   Non-Hermitian systems,  skin effect, subwavelength resonators, imaginary gauge potential, generalised Brillouin zone, exceptional points, topological invariant, vorticity, phase transition, bulk boundary correspondence.\par

\bigskip

\noindent \textbf{AMS Subject classifications.}
35B34, 
35P25, 
35C20, 
81Q12.  
\\

\vfill
\hrule
\tableofcontents
\vspace{-0.5cm}
\hrule
\vfill

\section{Introduction} 

Understanding the skin effect in non-Hermitian physical systems has been one of the hottest research topics in recent years \cite{zhang.zhang.ea2022review, okuma.kawabata.ea2020Topological,lin.tai.ea2023Topological,yokomizo.yoda.ea2022NonHermitian,skinadd4,skinadd5}. The skin effect is the phenomenon whereby the bulk eigenmodes of a non-Hermitian system are all localised at one edge of an open chain of resonators. This phenomenon is unique to non-Hermitian systems with non-reciprocal coupling. It has been realised experimentally in topological photonics, phononics, and other condensed matter systems \cite{ghatak.brandenbourger.ea2020Observation,skinadd1,skinadd2, skinadd3}. 
By introducing an imaginary gauge
potential, all the bulk eigenmodes condensate in the same direction \cite{hatano,yokomizo.yoda.ea2022NonHermitian,rivero.feng.ea2022Imaginary}. This has deep implications for the fundamental physics of the system. For example, the non-Hermitian skin effect means that the conventional bulk-edge correspondence principle is violated. 

In this paper, we study the non-Hermitian skin effect in a deep subwavelength regime using first-principles mathematical analysis. The ultimate goal of subwavelength wave physics is to manipulate waves at subwavelength scales. Recent breakthroughs, such as the emergence of the field of metamaterials, have allowed us to do this in a way that is robust and that beats traditional diffraction limits. Here, we consider one-dimensional systems of high-contrast subwavelength resonators as a demonstrative setting to develop a mathematical and numerical framework for the non-Hermitian skin effect in the subwavelength regime.

We consider finite chains of subwavelength resonators with an imaginary gauge potential supported inside the resonators. This imaginary gauge potential is realised by adding an additional first-order term to the classical Helmholtz differential problem (the $\gamma$-term in \eqref{eq:coupled ods}). It is worth emphasizing that the $\gamma$-term is added only inside the subwavelength resonators, which are much smaller than the operating wavelength. Without  exciting the structure's subwavelength resonances, the effect of the $\gamma$-term would be negligible. Using an asymptotic methodology that was previously developed for Hermitian systems \cite{ammari.davies.ea2021Functional,feppon.cheng.ea2023Subwavelength}, 
we can approximate the subwavelength eigenfrequencies and eigenmodes of a finite chain of resonators by the eigenvalues of a matrix, which we call the \emph{gauge capacitance matrix}. When $\gamma=0$, this capacitance matrix reduces to the capacitance matrix previously derived for Hermitian systems of subwavelength resonators \cite{feppon.cheng.ea2023Subwavelength,ammari.barandun.ea2023Edge}. This traditional capacitance matrix was originally developed to model the analogous many-body problem in the setting of electrostatics \cite{maxwell1873treatise, diaz2011positivity}. Given this matrix formulation, we can use the well-established theory of tridiagonal Toeplitz matrices and operators to prove the condensation of eigenmodes at one of the edges of the finite chain. 

Using established spectral theory for tridiagonal Toeplitz matrices, we will show the topological origins of the non-Hermitian skin effect. That is, we will demonstrate the  condensation of the eigenmodes. As a consequence of the condensation of the eigenmodes, the imaginary gauge potential leads to a mismatch between the eigenvalues of a finite chain and the band structure of the associated infinite periodic structure, when the band structure is computed over the standard Brillouin zone (as used for Hermitian problems). However, the correspondence between the spectra of finite and infinite structures can be understood if the spectral bands of the infinite periodic structure are computed over the complex plane, in the sense that the quasiperiodicities must be allowed to be complex \cite{borisov.fedotov2022Bloch}. The eigenmodes behave as Bloch waves with a complex wave number, with the imaginary part describing the exponential spatial decay of the amplitude. We will show in Theorem~\ref{cvthm} that the generalised Brillouin zone introduced in (\ref{gbz}) gives the correct description of the limiting problem.

Although the spectrum of the infinite periodic problem computed over the standard Brillouin zone does not provide the correct approximation of the spectrum of a finite resonator chain, the problem is nonetheless insightful to investigate in its own right. We will  consider exceptional points where eigenvalues and eigenvectors coincide and the quasiperiodic gauge capacitance matrix is not diagonalisable  \cite{ammari.davies.ea2022Exceptional}
and prove their existence for periodic dimer structures at the edges of the Brillouin zone. We will also show that the set of exceptional points in terms of the separating distance between the two identical resonators (which constitutes the dimer) in the unit cell separates two different phases described by different values of  the winding number of the eigenvalues, which is known as the \emph{vorticity}.  
  
The imaginary gauge potential considered here is not the only way to break the Hermiticity of a system of subwavelength resonators. This is also achieved by making the material parameters complex valued, corresponding to sources of energy gain or loss in the system. An important class are PT-symmetric systems, which satisfy a combination of time-reversal and inversion symmetry. They inspired a plethora of explorations in topological photonics and phononics and their mathematical foundations \cite{ammari.davies.ea2022Exceptional,ptsymmetry1,ptsymmetry2,ptsymmetry3}. Based on the contrast with the previous work \cite{ammari.davies.ea2022Exceptional, ammari.hiltunen2020Edge}, it is natural to divide non-Hermitian systems into two classes depending on whether they can be reduced to Hermitian systems through an imaginary gauge transformation. If such reduction is possible (which we examine in \cref{sec: PT}), the eigenmodes will be closely approximated by Bloch modes over the standard Brillouin zone, and the vorticity will vanish. This means,
on the one hand, that vorticity can be considered as a topological indicator of the appearance of the skin effect and, secondly, that the spectral convergence of large structures with vanishing vorticity can be understood using the standard approaches for Hermitian systems, based on computations over the standard Brillouin zone \cite{ammari.davies.ea2023Spectral}. Conversely, in the setting considered in this work, this convergence does not hold as the system is fundamentally distinct from any Hermitian system. The current work is the first to establish the convergence theory when such reduction is not possible and the eigenmodes are described through the generalised (complex) Brillouin zone.

The paper is organised as follows. In \cref{sec: preliminaries} we present the mathematical setup of the problem and derive an asymptotic approximation of the subwavelength eigenfrequencies and their associated eigenmodes. 
These are accurately approximated by the spectrum of the gauge capacitance matrix. \cref{sec: skin effect} is dedicated to the mathematical analysis of the skin effect. The condensation of the eigenmodes is proved and the non-uniform distribution of the subwavelength eigenfrequencies is illustrated. 
 \cref{sec: periodic case} studies the infinite periodic case. 
 In \cref{sec: periodic case1} we first take 
the quasiperiodicities to be real. We show that using the standard Brillouin zone does not lead to the correct modeling of the limit of the set of subwavelength eigenfrequencies as the size of the system goes to infinity. 
Nevertheless, the band functions over the standard Brillouin zone are used to define the vorticity, which we show that it is non-trivial in the current setting.
Such topological invariant can be used as an indicator of the skin effect since it is trivial for other non-Hermitian structures obtained by taking the material parameters inside the subwavelength resonators complex. In \cref{sec: complex bands}, we introduce the generalised (complex) Brillouin zone and prove the convergence of  the subwavelength eigenfrequencies of a finite system to the bands of the corresponding infinitely periodic structure computed over the generalised Brillouin zone. 
 In \cref{sec: teoplitz theory} we summarise the well-established theory of Toeplitz matrices and operators, while in \cref{sec: dimers} we present some additional numerical results for finite dimer systems with imaginary gauge potentials. In \cref{sec: approx band} we show how to obtain a generalised discrete Brillouin zone for different dimer systems. 
Finally, in \cref{sec: PT} we show that systems with non-Hermiticity arising from complex material parameters can be described in the limit when their sizes go to infinity by bands of corresponding infinitely periodic structures computed over the standard Brillouin zone.

\section{Preliminaries}\label{sec: preliminaries}       %

We consider a one-dimensional chain of $N$ disjoint subwavelength resonators $D_i\coloneqq (x_i^{\iL},x_i^{\iR})$, where $(x_i^{\iLR})_{1\<i\<N} \subset \R$ are the $2N$ extremities satisfying $x_i^{\iL} < x_i^{\iR} <  x_{i+1}^{\iL}$ for any $1\leq i \leq N$. We fix the coordinates such that $x_1^{\iL}=0$. We also denote by  $\ell_i = x_i^{\iR} - x_i^{\iL}$ the length of the $i$-th resonators,  and by $s_i= x_{i+1}^{\iL} -x_i^{\iR}$ the spacing between the $i$-th and $(i+1)$-th resonator. The system is illustrated in \cref{fig:setting}. We will use 
\begin{align*}
   D\coloneqq \bigcup_{i=1}^N(x_i^{\iL},x_i^{\iR})
\end{align*}
to symbolise the set of subwavelength resonators.

\begin{figure}[htb]
    \centering
    \begin{adjustbox}{width=\textwidth}
    \begin{tikzpicture}
        \coordinate (x1l) at (1,0);
        \path (x1l) +(1,0) coordinate (x1r);
        \path (x1r) +(0.75,0.7) coordinate (s1);
        \path (x1r) +(1.5,0) coordinate (x2l);
        \path (x2l) +(1,0) coordinate (x2r);
        \path (x2r) +(0.5,0.7) coordinate (s2);
        \path (x2r) +(1,0) coordinate (x3l);
        \path (x3l) +(1,0) coordinate (x3r);
        \path (x3r) +(1,0.7) coordinate (s3);
        \path (x3r) +(2,0) coordinate (x4l);
        \path (x4l) +(1,0) coordinate (x4r);
        \path (x4r) +(0.4,0.7) coordinate (s4);
        \path (x4r) +(1,0) coordinate (dots);
        \path (dots) +(1,0) coordinate (x5l);
        \path (x5l) +(1,0) coordinate (x5r);
        \path (x5r) +(1.75,0) coordinate (x6l);
        \path (x5r) +(0.875,0.7) coordinate (s5);
        \path (x6l) +(1,0) coordinate (x6r);
        \path (x6r) +(1.25,0) coordinate (x7l);
        \path (x6r) +(0.525,0.7) coordinate (s6);
        \path (x7l) +(1,0) coordinate (x7r);
        \path (x7r) +(1.5,0) coordinate (x8l);
        \path (x7r) +(0.75,0.7) coordinate (s7);
        \path (x8l) +(1,0) coordinate (x8r);
        \draw[ultra thick] (x1l) -- (x1r);
        \node[anchor=north] (label1) at (x1l) {$x_1^{\iL}$};
        \node[anchor=north] (label1) at (x1r) {$x_1^{\iR}$};
        \node[anchor=south] (label1) at ($(x1l)!0.5!(x1r)$) {$\ell_1$};
        \draw[dotted,|-|] ($(x1r)+(0,0.25)$) -- ($(x2l)+(0,0.25)$);
        \draw[ultra thick] (x2l) -- (x2r);
        \node[anchor=north] (label1) at (x2l) {$x_2^{\iL}$};
        \node[anchor=north] (label1) at (x2r) {$x_2^{\iR}$};
        \node[anchor=south] (label1) at ($(x2l)!0.5!(x2r)$) {$\ell_2$};
        \draw[dotted,|-|] ($(x2r)+(0,0.25)$) -- ($(x3l)+(0,0.25)$);
        \draw[ultra thick] (x3l) -- (x3r);
        \node[anchor=north] (label1) at (x3l) {$x_3^{\iL}$};
        \node[anchor=north] (label1) at (x3r) {$x_3^{\iR}$};
        \node[anchor=south] (label1) at ($(x3l)!0.5!(x3r)$) {$\ell_3$};
        \draw[dotted,|-|] ($(x3r)+(0,0.25)$) -- ($(x4l)+(0,0.25)$);
        \node (dots) at (dots) {\dots};
        \draw[ultra thick] (x4l) -- (x4r);
        \node[anchor=north] (label1) at (x4l) {$x_4^{\iL}$};
        \node[anchor=north] (label1) at (x4r) {$x_4^{\iR}$};
        \node[anchor=south] (label1) at ($(x4l)!0.5!(x4r)$) {$\ell_4$};
        \draw[dotted,|-|] ($(x4r)+(0,0.25)$) -- ($(dots)+(-.25,0.25)$);
        \draw[ultra thick] (x5l) -- (x5r);
        \node[anchor=north] (label1) at (x5l) {$x_{N-3}^{\iL}$};
        \node[anchor=north] (label1) at (x5r) {$x_{N-3}^{\iR}$};
        \node[anchor=south] (label1) at ($(x5l)!0.5!(x5r)$) {$\ell_{N-3}$};
        \draw[dotted,|-|] ($(x5r)+(0,0.25)$) -- ($(x6l)+(0,0.25)$);
        \draw[ultra thick] (x6l) -- (x6r);
        \node[anchor=north] (label1) at (x6l) {$x_{N-2}^{\iL}$};
        \node[anchor=north] (label1) at (x6r) {$x_{N-2}^{\iR}$};
        \node[anchor=south] (label1) at ($(x6l)!0.5!(x6r)$) {$\ell_{N-2}$};
        \draw[dotted,|-|] ($(x6r)+(0,0.25)$) -- ($(x7l)+(0,0.25)$);
        \draw[ultra thick] (x7l) -- (x7r);
        \node[anchor=north] (label1) at (x7l) {$x_{N-1}^{\iL}$};
        \node[anchor=north] (label1) at (x7r) {$x_{N-1}^{\iR}$};
        \node[anchor=south] (label1) at ($(x7l)!0.5!(x7r)$) {$\ell_{N-1}$};
        \draw[dotted,|-|] ($(x7r)+(0,0.25)$) -- ($(x8l)+(0,0.25)$);
        \draw[ultra thick] (x8l) -- (x8r);
        \node[anchor=north] (label1) at (x8l) {$x_{N}^{\iL}$};
        \node[anchor=north] (label1) at (x8r) {$x_{N}^{\iR}$};
        \node[anchor=south] (label1) at ($(x8l)!0.5!(x8r)$) {$\ell_N$};
        \node[anchor=north] (label1) at (s1) {$s_1$};
        \node[anchor=north] (label1) at (s2) {$s_2$};
        \node[anchor=north] (label1) at (s3) {$s_3$};
        \node[anchor=north] (label1) at (s4) {$s_4$};
        \node[anchor=north] (label1) at (s5) {$s_{N-2}$};
        \node[anchor=north] (label1) at (s6) {$s_{N-1}$};
        \node[anchor=north] (label1) at (s7) {$s_N$};
    \end{tikzpicture}
    \end{adjustbox}
    \caption{A chain of $N$ subwavelength resonators, with lengths
    $(\ell_i)_{1\leq i\leq N}$ and spacings $(s_{i})_{1\leq i\leq N-1}$.}
    \label{fig:setting}
\end{figure}
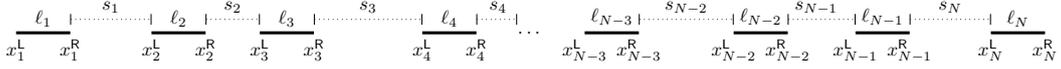

As a scalar wave field $u(t,x)$ propagates in a heterogeneous medium, it satisfies the following one-dimensional wave equation:
\begin{align*}
    \frac{1}{\kappa(x)}\frac{\partial{^2}}{\partial t^{2}}u(t,x) -\frac{\partial}{\partial x}\left(
        \frac{1}{\rho(x)}\frac{\partial}{\partial x}  u(t,x)\right) = 0, \qquad (t,x)\in\R\times\R.
\end{align*}
The parameters $\kappa(x)$ and $\rho(x)$ are the material parameters of the medium. We consider piecewise constant material parameters
\begin{align*}
    \kappa(x)=
    \begin{dcases}
        \kappa_b & x\in D,\\
        \kappa&  x\in\R\setminus D,
    \end{dcases}\quad\text{and}\quad
    \rho(x)=
    \begin{dcases}
        \rho_b & x\in D,\\
        \rho&  x\in\R\setminus D,
    \end{dcases}
\end{align*}
where the constants $\rho_b, \rho, \kappa, \kappa_b \in \R_{>0}$. The wave speeds inside the resonators $D$ and inside the background medium $\R\setminus D$, are denoted respectively by $v_b$ and $v$, the wave numbers respectively by $k_b$ and $k$, and the contrast between the densities of the resonators and the background medium by $\delta$:
\begin{align}
    v_b:=\sqrt{\frac{\kappa_b}{\rho_b}}, \qquad v:=\sqrt{\frac{\kappa}{\rho}},\qquad
    k_b:=\frac{\omega}{v_b},\qquad k:=\frac{\omega}{v},\qquad
    \delta:=\frac{\rho_b}{\rho}.
\end{align}
 
Up to using a Fourier decomposition in time, we can assume that 
the total wave field $u(t,x)$ is time-harmonic:
\begin{align*}
    u(t,x)=\Re ( e^{-\i\omega t}u(x) ),
\end{align*}
for a function $u(x)$ which solves the one-dimensional Helmholtz equations:
\begin{align}
    -\frac{\omega^{2}}{\kappa(x)}u(x)-\frac{\dd}{\dd x}\left( \frac{1}{\rho(x)}\frac{\dd}{\dd
    x}  u(x)\right) =0,\qquad x \in\R.
    \label{eq:time indip Helmholtz}
\end{align}

In this work, we will consider the following variation of \eqref{eq:time indip Helmholtz}:
\begin{align}
    -\frac{\omega^{2}}{\kappa(x)}u(x)-\frac{\dd}{\dd x}\gamma(x)u(x)-\frac{\dd}{\dd x}\left( \frac{1}{\rho(x)}\frac{\dd}{\dd
    x}  u(x)\right) =0,\qquad x \in\R,
    \label{eq: gen Strum-Liouville}
\end{align}
for a piecewise constant coefficient
\begin{align*}
\gamma(x) = \begin{dcases}
    \gamma,\quad x\in D,\\
    0,\quad x \in \R\setminus D.
\end{dcases}
\end{align*}
This new parameter $\gamma$ extends the usual scalar wave equation to a generalised Strum--Liouville equation via the introduction of an imaginary gauge potential \cite{yokomizo.yoda.ea2022NonHermitiana}. Alternatively, one may think about \eqref{eq: gen Strum-Liouville} as a damped wave equation where the damping acts in the space dimension instead of the time dimension.

In these circumstances of piecewise constant material parameters, the wave problem determined by \eqref{eq: gen Strum-Liouville} can be rewritten as the following system of coupled one-dimensional equations:

\begin{align}
    \begin{dcases}
        u\prii(x) + \gamma u\pri(x)+\frac{\omega^2}{v_b^2}u=0, & x\in D,\\
        u\prii(x) + \frac{\omega^2}{v^2}u=0, & x\in\R\setminus D,\\
        u\vert_{\iR}(x^{\iLR}_i) - u\vert_{\iL}(x^{\iLR}_i) = 0, & \text{for all } 1\leq i\leq N,\\
        \left.\frac{\dd u}{\dd x}\right\vert_{\iR}(x^{\iL}_{{i}})=\delta\left.\frac{\dd u}{\dd x}\right\vert_{\iL}(x^{\iL}_{{i}}), & \text{for all } 1\leq i\leq N,\\
        \left.\frac{\dd u}{\dd x}\right\vert_{\iR}(x^{\iR}_{{i}})=\delta\left.\frac{\dd u}{\dd x}\right\vert_{\iR}(x^{\iL}_{{i}}), & \text{for all } 1\leq i\leq N,\\
        \frac{\dd u}{\dd\ \abs{x}} -\i k u = 0, & x\in(-\infty,x_1^{\iL})\cup (x_N^{\iR},\infty).
    \end{dcases}
\label{eq:coupled ods}
\end{align}

We are interested in the resonances $\omega\in\C$ such that \eqref{eq:coupled ods} has a non-trivial solution. We will perform an asymptotic analysis in a high-contrast limit, given by $\delta\to 0$. We will look for the subwavelength modes within this regime, which we characterise by satisfying $\omega \to 0$ as $\delta\to 0$. This limit will recover subwavelength resonances, while keeping the size of the resonators fixed.

In all what follows we will denote by $H^1(D)$ the usual Sobolev spaces of complex-valued functions on $D$.

\begin{definition}[Subwavelength eigenfrequency]
    A (complex) frequency $\omega(\delta) \in \C$, with non-negative real part, satisfying
    \begin{align}
    \omega(\delta) \to 0\quad \text{as}\quad \delta\to 0
    \label{eq: asymptotic behaviour subwavelength resonances}
    \end{align}
    and such that \eqref{eq:coupled ods} admits a non-zero solution $v(\omega,\delta)\in H^1(D)$ for $\omega=\omega(\delta)$ is called a \emph{subwavelength eigenfrequency}. The solution $v(\omega,\delta)$ is called a \emph{subwavelength eigenmode}.
\end{definition}

We can immediately see that $\omega = 0$ is a trivial solution to \eqref{eq:coupled ods} corresponding to an eigenmode which is constant across all the resonators. In what follows, we will restrict attention to other solutions of \eqref{eq:coupled ods}. 

The use of the Dirichlet-to-Neumann map has proven itself to be an effective tool to find solutions to \eqref{eq:coupled ods} that lend themselves to asymptotic analysis in the high-contrast regime. Hereafter, we will adapt the methods of \cite{feppon.cheng.ea2023Subwavelength} to \eqref{eq:coupled ods}.

The first step is to solve the outer problem
\begin{align}
    \begin{dcases}
        w\prii(x) + \frac{\omega^2}{v^2}w=0, & x\in\R\setminus\bigcup_{i=1}^N(x_i^{\iL},x_i^{\iR}),\\
        w(x_i^{\iLR})=f_i^{\iLR}, & \text{for all }1\leq i\leq N,\\
        \frac{\dd u}{\dd\ \abs{x}}u -\i k u = 0, & x\in(-\infty,x_1^{\iL})\cup (x_N^{\iR},\infty),
    \end{dcases}
\label{eq:coupled ods outside}
\end{align}
for some boundary data $f_i^{\iLR}\in \C^{2N}$. Its solution is simply given by 
\begin{align}
w(x)=\begin{dcases}
    f_1^{\iL} e^{-\i k (x-x_1^{\iL})}, & x\leq x_1^{\iL},\\
    a_i e^{\i k x}+b_i e^{-\i k x}, & (x_i^{\iR},x_{i+1}^{\iL}),\\
    f_N^{\iR} e^{-\i k (x-x_N^{\iR})}, & x_1^{\iR}\leq x,\\
\end{dcases}
\label{eq: outer solution}
\end{align}
where $a_i$ and $b_i$ are given by
\begin{align*}
\begin{pmatrix}
    a_i\\b_i
\end{pmatrix} = - \frac{1}{2\i \sin(k s_i)}
\begin{pmatrix}
    e^{-\i k x_{i+1}^{\iL}} & -e^{-\i k x_i^{\iR}} \\
    -e^{\i k x_{i+1}^{\iL}} & e^{\i k x_i^{\iR}} 
\end{pmatrix}
\begin{pmatrix}
    f_i^{\iR}\\ f_{i+1}^{\iL}
\end{pmatrix},
\end{align*}
as shown in \cite[Lemma 2.1]{feppon.cheng.ea2023Subwavelength}. The second and harder step is to combine the expression for the outer solution \eqref{eq: outer solution} with the boundary conditions in order to find the full solution. We will handle this information through the Dirichlet-to-Neumann map.

\begin{definition}[Dirichlet-to-Neumann map]
        For any $k\in\C$ which is not of the form 
          $n\pi/s_{i}$ for some $n\in\Z\backslash\{0\}$ and $1\<i\<N-1$, 
          the \emph{Dirichlet-to-Neumann map} with wave number $k$ is the linear operator 
            $\mathcal{T}^{k}\,:\,\C^{2N}\to \C^{2N}$ defined by 
            \begin{align*}
          \mathcal{T}^{k}[(f_i^{\iLR})_{1 \leq i \leq N}]=
          \left(\pm\frac{\dd w}{\dd x}(x_i^{\iLR})\right)_{1 \leq i \leq N},
            \end{align*}
      where $w$ is  the unique solution to \eqref{eq:coupled ods outside}.
\end{definition}

We refer to \cite[Section 2]{feppon.cheng.ea2023Subwavelength} for a more extensive discussion of this operator, but recall that $\mathcal{T}^{k}$ has a block-diagonal matrix representation
\begin{align}
    \label{eq: Matrix form DTN}
    T^{k}\begin{pmatrix}
        f_1^{\iL}\\
       f_1^{\iR}\\
       \vdots\\
       f_N^{\iL}\\
       f_N^{\iR}
    \end{pmatrix} = \begin{pmatrix}
        \i k &&&&& \\
        & A^{k}(s_{1}) & & & & \\
        & & A^{k}(s_{2}) & & & \\
        &  & &\ddots & & \\
        & & & &  A^{k}(s_{(N-1)}) &\\
        & & & & & \i k \\
    \end{pmatrix}\begin{pmatrix}
        f_1^{\iL}\\
       f_1^{\iR}\\
       \vdots\\
       f_N^{\iL}\\
       f_N^{\iR}
    \end{pmatrix},
\end{align} where, for any real $\ell\in\R$, $A^{k}(\ell)$ denotes the $2\times 2$ symmetric matrix given by
    \begin{equation}
    \label{eqn:1lzi8}
        A^{k}(\ell):=\begin{pmatrix}
            -\dfrac{k \cos(k\ell)}{\sin(k\ell)} & \dfrac{k}{\sin(k\ell)} \\
            \dfrac{k}{\sin(k\ell)} & -\dfrac{k\cos(k\ell)}{\sin(k\ell)}
        \end{pmatrix}.
    \end{equation}
Consequently, $T^{k}$ is holomorphic in $k$ in a neighbourhood of the origin and admits a power series representation $T^{k} = \sum_{n\geq 0}k^n T_n$, where
\begin{align}
T_0 = \begin{pmatrix}
    0 &&&&& \\
    & A^{0}(s_{1}) & & & & \\
    & & A^{0}(s_{2}) & & & \\
    &  & &\ddots & & \\
    & & & &  A^{0}(s_{(N-1)}) &\\
    & & & & & 0 \\
\end{pmatrix}, \label{eq: DTN T0}
\end{align}
and $A^0(s):=\lim_{k\to0} A^k(s)$.

The above properties of the Dirichlet-to-Neumann map will be crucial to find subwavelength eigenfrequencies. We will use $\mathcal{T}^{k}$ and $T^k$ interchangeably.

\subsection{Characterisation of the subwavelength eigenfrequencies}
The Dirichlet-to-Neumann map allows us to reformulate \eqref{eq:coupled ods} as follows:
\begin{align}
    \begin{dcases}
        u\prii(x) + \gamma u\pri(x)+\frac{\omega^2}{v_b^2}u=0, & x\in\bigcup_{i=1}^N(x_i^{\iL},x_i^{\iR}),\\
        \left.\frac{\dd u}{\dd x}\right\vert_{\iR}(x^{\iL}_{{i}})=-\delta\dtn^{\frac{\omega}{v}}[u]^{\iL}_i, & \forall\ 1\leq i\leq N,\\
        \left.\frac{\dd u}{\dd x}\right\vert_{\iL}(x^{\iR}_{{i}})=\delta\dtn^{\frac{\omega}{v}}[u]^{\iR}_i, & \forall\ 1\leq i\leq N.
    \end{dcases}
\label{eq:coupled ods with dtn}
\end{align}
We can then further rewrite \eqref{eq:coupled ods with dtn} in weak form by multiplying it by a test function $w\in H^1(D)$ and integrating on the intervals. Explicitly, we obtain that $u$ is solution to \eqref{eq:coupled ods with dtn} if and only if
\begin{align}
a(u,w)=0 \label{eq:formulation in weak form}
\end{align}
for any $w\in H^1(D)$, where
\begin{align}
    a(u,w) &= \sum_{i=1}^N\int_{x_i^{\iL}}^{x_i^{\iR}} u\pri\bar{w}\pri -\gamma u\pri \bar{w}-\frac{\omega^2}{v_b^2}u\bar{w}\dd x \nonumber\\
    &-\delta \sum_{i=1}^N \bar{w}(x_i^{\iR})\dtn^{\frac{\omega}{v}}[u]_i^{\iR} + \bar{w}(x_i^{\iL})\dtn^{\frac{\omega}{v}}[u]_i^{\iL}.
\end{align}
We also introduce a slightly modified bilinear form
\begin{align}
    a_{\omega,\delta}(u,w) &= \sum_{i=1}^N\left(\int_{x_i^{\iL}}^{x_i^{\iR}} u\pri\bar{w}\pri -\gamma u\pri \bar{w}\dd x + \int_{x_i^{\iL}}^{x_i^{\iR}} u\dd x\int_{x_i^{\iL}}^{x_i^{\iR}} \bar{w}\dd x \right)\nonumber\\
    &-\sum_{i=1}^N\left(\int_{x_i^{\iL}}^{x_i^{\iR}}\frac{\omega^2}{v_b^2}u\bar{w}\dd x + \delta\left(\bar{w}(x_i^{\iR})\dtn^{\frac{\omega}{v}}[u]_i^{\iR} + \bar{w}(x_i^{\iL})\dtn^{\frac{\omega}{v}}[u]_i^{\iL}\right)\right).
    \label{eq: def a weak form}
\end{align}
This new form parametrised by $\omega$ and $\delta$ is an analytic perturbation of the $a_{0,0}$ form, which is continuous coercive on $H^1(D)$ and so $a_{\omega,\delta}$ inherits this property.

We exploit this, by defining the $h_j(\omega,\delta)$ functions as the Lax--Milgram solutions to the variational problem
\begin{align}
    a_{\omega,\delta}(h_j(\omega,\delta),w) = \int_{x_j^{\iL}}^{x_j^{\iR}} \bar{w}\dd x \label{eq: def h_j}
\end{align}
for every $w\in H^1(D)$ and $1\leq j\leq N$. In the following lemma we show that the functions $h_j$ allow us to reduce \eqref{eq:formulation in weak form} to a finite dimensional $N\times N$ linear system by acting as basis functions.

\begin{lemma}\label{lemma: I-capmat=0}
    Let $\omega \in \C$ and $\delta \in \R$ belong to a neighbourhood of zero such that $a_{\omega,\delta}$ is coercive. The variational problem \eqref{eq:formulation in weak form} admits a non-trivial solution $u\equiv u(\omega,\delta)$ if and only if the $N\times N$ non-linear eigenvalue problem
    \begin{align*}
        (I-\exactcapmat(\omega,\delta))\bm x = 0
    \end{align*}
    has a  solution $\omega$ and $\bm x:=(x_i(\omega,\delta))_{1\leq i\leq N}$, where $\exactcapmat(\omega,\delta)$ is the matrix  given by
    \begin{align}
    \label{eq: def exact cap mat}
    \exactcapmat(\omega,\delta)\equiv (\exactcapmat(\omega,\delta)_{ij})_{1\leq i,j\leq N}:= 
        \left( \int_{x_i^{\iL}}^{x_i^{\iR}} h_j(\omega,\delta)\dd x
        \right)_{1\leq i,j\leq N}. 
    \end{align}
\end{lemma}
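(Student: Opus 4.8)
The plan is to exploit the fact that the modified form $a_{\omega,\delta}$ differs from the form $a$ appearing in \eqref{eq:formulation in weak form} only by a rank-$N$ correction. Comparing \eqref{eq: def a weak form} with the definition of $a$, one sees directly that
\begin{align*}
a_{\omega,\delta}(u,w) - a(u,w) = \sum_{i=1}^N \left( \int_{x_i^{\iL}}^{x_i^{\iR}} u \dd x \right) \left( \int_{x_i^{\iL}}^{x_i^{\iR}} \bar{w} \dd x \right),
\end{align*}
since the two forms have identical gradient, gauge, mass and Dirichlet-to-Neumann contributions. Hence $u$ solves \eqref{eq:formulation in weak form}, i.e.\ $a(u,w) = 0$ for all $w \in H^1(D)$, if and only if
\begin{align*}
a_{\omega,\delta}(u,w) = \sum_{i=1}^N \left( \int_{x_i^{\iL}}^{x_i^{\iR}} u \dd x \right) \left( \int_{x_i^{\iL}}^{x_i^{\iR}} \bar{w} \dd x \right) \quad \text{for all } w \in H^1(D).
\end{align*}

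Next, I would introduce the coordinates $x_j := \int_{x_j^{\iL}}^{x_j^{\iR}} u \dd x$ and set $\bm x = (x_j)_{1 \leq j \leq N}$. Using the defining property \eqref{eq: def h_j} of the basis functions $h_j$ together with the linearity of $a_{\omega,\delta}$ in its first argument, the right-hand side of the last display can be rewritten as
\begin{align*}
\sum_{j=1}^N x_j \int_{x_j^{\iL}}^{x_j^{\iR}} \bar{w} \dd x = \sum_{j=1}^N x_j\, a_{\omega,\delta}(h_j, w) = a_{\omega,\delta}\left( \sum_{j=1}^N x_j h_j, w \right).
\end{align*}
Here the coercivity of $a_{\omega,\delta}$ is the crucial ingredient: it renders the form non-degenerate, so that the identity $a_{\omega,\delta}(u - \sum_{j} x_j h_j, w) = 0$ for every $w \in H^1(D)$ forces
\begin{align*}
u = \sum_{j=1}^N x_j h_j.
\end{align*}
Thus every solution of the variational problem is an explicit linear combination of the $h_j$, with coefficients given by its averages over the resonators.

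Finally, I would close the loop by substituting this representation back into the definition of $x_i$. Integrating $u = \sum_j x_j h_j$ over $D_i$ and using \eqref{eq: def exact cap mat} gives
\begin{align*}
x_i = \int_{x_i^{\iL}}^{x_i^{\iR}} u \dd x = \sum_{j=1}^N x_j \int_{x_i^{\iL}}^{x_i^{\iR}} h_j \dd x = \bigl(\exactcapmat(\omega,\delta) \bm x\bigr)_i,
\end{align*}
which is exactly $(I - \exactcapmat(\omega,\delta)) \bm x = 0$. The argument is reversible: given $\bm x \neq 0$ with $(I - \exactcapmat)\bm x = 0$, defining $u := \sum_j x_j h_j$ and running the computation backwards yields $\int_{D_i} u = (\exactcapmat \bm x)_i = x_i$, whence $a(u,w) = a_{\omega,\delta}(u,w) - \sum_i x_i \int_{D_i} \bar{w} \dd x = 0$ for all $w$. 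The remaining point, which is routine, is the equivalence of non-triviality: if $u = 0$ then all its averages $x_i$ vanish, so $\bm x = 0$, and conversely $\bm x = 0$ immediately gives $u = \sum_j x_j h_j = 0$; hence $u$ is non-trivial precisely when $\bm x \neq 0$. The main (and essentially only genuine) obstacle is the use of coercivity to pass from the variational identity to the pointwise representation $u = \sum_j x_j h_j$; everything else is bookkeeping with the rank-$N$ splitting of the two forms.
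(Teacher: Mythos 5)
Your argument is correct and is essentially the standard one: the paper does not write out this proof but simply defers to the arguments of Lemma~3.4 in the cited work of Feppon and Cheng et al., and the reduction you give --- splitting $a_{\omega,\delta}-a$ as the rank-$N$ term $\sum_i\bigl(\int_{D_i}u\bigr)\bigl(\int_{D_i}\bar w\bigr)$, using coercivity to obtain the representation $u=\sum_j x_j h_j$ with $x_j=\int_{D_j}u$, and then closing the system to get $(I-\exactcapmat(\omega,\delta))\bm x=0$ --- is exactly that argument. The only point worth making explicit is in the non-triviality equivalence for the converse direction: that $\bm x\neq 0$ forces $u=\sum_j x_j h_j\neq 0$ follows not from linear independence of the $h_j$ but from the identity $\int_{D_i}u=(\exactcapmat\bm x)_i=x_i$, which you have already established, so your bookkeeping closes correctly.
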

\begin{proof}
    This can be shown using the arguments presented in \cite[Lemma 3.4]{feppon.cheng.ea2023Subwavelength}.
\end{proof}
Subwavelength eigenfrequencies are thus the values $\omega(\delta)$ satisfying \eqref{eq: asymptotic behaviour subwavelength resonances} for which $I-\exactcapmat(\omega,\delta)$ is not invertible.

\subsection{Discrete approximation of the subwavelength eigenfrequencies and the gauge capacitance matrix}

In the high-contrast, low-frequency scattering problems, recent work has shown that capacitance matrices can be used to provide asymptotic characterisations of the resonant modes \cite{ammari.davies.ea2021Functional}. Capacitance matrices were first introduced to describe many-body electrostatic systems by Maxwell \cite{maxwell1873treatise}, and have recently enjoyed a resurgence in subwavelength physics. In particular, for one-dimensional models like the one considered here, a capacitance matrix formulation has proven to be very effective in both efficiently solving the problem as well as providing valuable insights in the solution \cite{feppon.cheng.ea2023Subwavelength, ammari.barandun.ea2023Edge}. Therefore, we seek a similar formulation for the case of subwavelength resonators with an imaginary gauge potential.

In the case of systems of Hermitian subwavelength resonators (i.e., when $\gamma=0$), the entries of the capacitance matrix are defined by
\begin{align}\label{eq:C}
    \capmat_{i,j} = -\int_{\partial D_i}\frac{\partial V_j}{\partial \nu}\dd \sigma, 
\end{align}
where $\nu$ is the outward-pointing normal, $D_i$ the $i$-th resonator and $V_i : \R\to \R$ the solutions of the problems
\begin{align}
    \begin{dcases}
        -\frac{\dd{^2}}{\dd x^2} V_i =0, & x\in\R\setminus\bigcup_{i=1}^N(x_i^{\iL},x_i^{\iR}), \\
        V_i(x)=\delta_{ij}, & x\in (x_j^{\iL},x_j^{\iR}),\\
        V_i^\alpha(x) = \BO(1) & \mbox{as } \abs{x}\to\infty,
    \end{dcases}
    \label{eq: def V_i}
\end{align}
where $\delta_{ij}$ denotes the Kronecker symbol; see \cite{ammari.davies.ea2021Functional}. 

Here, the formulation for the non-Hermitian system \eqref{eq:coupled ods} is different from the Hermitian case. The following definition is in agreement with the three-dimensional case for the skin effect \cite{3DSkin}. We let $\R^* \coloneqq \R \setminus \{0\}$.

\begin{definition}[Gauge capacitance matrix]
    For $\gamma \in \R^*$,  we define the \emph{gauge capacitance matrix} $\capmatg\in\R^{N\times N}$ by 
    \begin{align}
        \capmat_{i,j}^\gamma \coloneqq -\frac{|D_i|}{\int_{D_i}e^{\gamma x}\dd x}\int_{\partial D_i} e^{\gamma x} \frac{\partial V_j(x)}{\partial \nu}\dd \sigma,
        \label{eq: def cap mat}
    \end{align}
    where $V_j$ is defined by \eqref{eq: def V_i}.
\end{definition}

It is easy to see that the gauge capacitance matrix is tridiagonal, non-symmetric,  and is given by
\begin{align}
    \capmat_{i,j}^\gamma \coloneqq \begin{dcases}
        \frac{\gamma}{s_1} \frac{\ell_1}{1-e^{-\gamma \ell_1}}, & i=j=1,\\
         \frac{\gamma}{s_i} \frac{\ell_i}{1-e^{-\gamma \ell_i}} -\frac{\gamma}{s_{i-1}} \frac{\ell_i}{1-e^{\gamma \ell_i}},  & 1< i=j< N,\\
       - \frac{\gamma}{s_i} \frac{\ell_i}{1-e^{-\gamma \ell_j}},  & 1\leq i=j-1\leq N-1,\\
       \frac{\gamma}{s_j} \frac{\ell_i}{1-e^{\gamma \ell_j}}, & 2\leq i=j+1\leq N,\\
      - \frac{\gamma}{s_{N-1}} \frac{\ell_N}{1-e^{\gamma \ell_N}}, & i=j=N,\\
    \end{dcases}\label{eq: explicit coef cap mat}
\end{align} 
while all the other entries are zero. 

The following proposition is a central result. It shows that to leading order in $\delta$ the gauge capacitance matrix encodes all the information of subwavelength eigenfrequencies.

\begin{proposition}\label{prop: asymptotic of hj}
     Let $\omega \in \C$ and $\delta \in \R$ belong to a small enough neighbourhood of zero. Then, the matrix $\exactcapmat(\omega,\delta)$ from \eqref{eq: def exact cap mat} has the following asymptotic expansion:
    \begin{align}
        \exactcapmat(\omega,\delta) = I+\frac{\omega^2}{v_b^2} V\inv - \delta V\inv\capmatg V\inv + \BO((\omega+\delta)^2).
        \label{eq: integral asymptotic expansion hj}
    \end{align}
    Here, 
    \begin{equation} \label{defV} 
    V = \diag(\ell_1,\dots,\ell_N)
    \end{equation} is the volume matrix.
 \end{proposition}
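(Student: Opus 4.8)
The plan is to expand the Lax--Milgram solutions $h_j(\omega,\delta)$ of \eqref{eq: def h_j} to first order in $\delta$ and to second order in $\omega$, substitute into \eqref{eq: def exact cap mat}, and read off the entries. I would first split the form \eqref{eq: def a weak form} exactly as $a_{\omega,\delta}=a_{0,0}-\tfrac{\omega^2}{v_b^2}b-\delta\,c_\omega$, where $b(u,w)=\sum_i\int_{D_i}u\bar w\dd x$ and $c_\omega(u,w)=\sum_i\big(\bar w(x_i^{\iR})\dtn^{\omega/v}[u]_i^{\iR}+\bar w(x_i^{\iL})\dtn^{\omega/v}[u]_i^{\iL}\big)$. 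Since $a_{0,0}$ is coercive and $a_{\omega,\delta}$ is an analytic perturbation of it, the correctors at each order are well posed and the remainder stays of higher order uniformly near the origin, which is what legitimises the $\BO((\omega+\delta)^2)$ control. The leading term $h_j^{(0)}$ solves $a_{0,0}(h_j^{(0)},w)=\int_{D_j}\bar w\dd x$; testing against functions that are constant on each resonator kills the gradient terms and forces $h_j^{(0)}=\ell_j\inv\mathbf{1}_{D_j}$, whence $\exactcapmat(0,0)_{ij}=\int_{D_i}h_j^{(0)}\dd x=\delta_{ij}$, i.e.\ $\exactcapmat(0,0)=I$.

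Writing $h_j=h_j^{(0)}+\tfrac{\omega^2}{v_b^2}p_j+\delta q_j+\dots$ and matching orders gives $a_{0,0}(p_j,w)=b(h_j^{(0)},w)$ and $a_{0,0}(q_j,w)=c_0(h_j^{(0)},w)$, with $c_0$ the value of $c_\omega$ at $\omega=0$, i.e.\ using $\dtn^0$. The $\omega^2$-corrector is immediate: $b(h_j^{(0)},w)=\ell_j\inv\int_{D_j}\bar w\dd x=\ell_j\inv a_{0,0}(h_j^{(0)},w)$, so by uniqueness $p_j=\ell_j\inv h_j^{(0)}$ and $\int_{D_i}p_j\dd x=\ell_j\inv\delta_{ij}=(V\inv)_{ij}$, which produces the term $\tfrac{\omega^2}{v_b^2}V\inv$.

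The substance of the proof is the $\delta$-corrector $q_j$. Because the spaces $H^1(D_i)$ decouple, I would pass to the strong form on each resonator: $q_j$ solves $q_j\prii+\gamma q_j\pri=c_i$ on $D_i$, where the constant $c_i\coloneqq\int_{D_i}q_j\dd x$ arises from the volume-coupling term, subject to the flux data $q_j\pri(x_i^{\iR})=\dtn^0[h_j^{(0)}]_i^{\iR}$ and $q_j\pri(x_i^{\iL})=-\dtn^0[h_j^{(0)}]_i^{\iL}$. The key observation is that, since the $k=0$ exterior problem \eqref{eq:coupled ods outside} with the boundary values $\ell_j\inv\delta_{ij}$ of $h_j^{(0)}$ is exactly the defining problem \eqref{eq: def V_i} for $\ell_j\inv V_j$, the block form \eqref{eq: DTN T0} identifies these fluxes with normal derivatives of the potential, $\dtn^0[h_j^{(0)}]_i^{\iLR}=\ell_j\inv\frac{\partial V_j}{\partial\nu}(x_i^{\iLR})$. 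Solving the ODE as $q_j=A_i+B_ie^{-\gamma x}+\tfrac{c_i}{\gamma}x$ and imposing the two flux conditions determines $B_i$ and $c_i$ without reference to the free constant $A_i$ (fixed afterwards by the self-consistency $c_i=\int_{D_i}q_j\dd x$), yielding
\[
c_i=\frac{\gamma}{\ell_j}\,\frac{\dfrac{\partial V_j}{\partial\nu}(x_i^{\iR})\,e^{-\gamma x_i^{\iL}}+\dfrac{\partial V_j}{\partial\nu}(x_i^{\iL})\,e^{-\gamma x_i^{\iR}}}{e^{-\gamma x_i^{\iL}}-e^{-\gamma x_i^{\iR}}}.
\]
Multiplying numerator and denominator by $e^{\gamma(x_i^{\iL}+x_i^{\iR})}$ converts the $e^{-\gamma x}$ weights produced by the homogeneous solution into the $e^{\gamma x}$ weights of \eqref{eq: def cap mat}, giving $c_i=-\ell_i\inv\ell_j\inv\capmat^\gamma_{ij}=-(V\inv\capmatg V\inv)_{ij}$ and hence the term $-\delta\,V\inv\capmatg V\inv$.

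The main obstacle is exactly this last step: recognising that the self-consistent inner Neumann problem on each resonator reproduces the gauge capacitance matrix, and in particular that the exponential weight $e^{\gamma x}$ defining $\capmatg$ in \eqref{eq: def cap mat} is generated by the homogeneous solution $e^{-\gamma x}$ of $q\prii+\gamma q\pri=c_i$. Everything else --- the $\omega$-dependence of $\dtn^{\omega/v}$ through $T^k=\sum_n k^nT_n$ and the iteration of the perturbation series --- feeds only into the remainder $\BO((\omega+\delta)^2)$ and needs no idea beyond the coercivity of $a_{0,0}$ and the end-resonator conventions, where the vanishing corner blocks of \eqref{eq: DTN T0} match $\tfrac{\partial V_j}{\partial\nu}=0$ on the outer boundaries.
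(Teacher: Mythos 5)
Your proposal is correct and follows essentially the same route as the paper's proof: a perturbative expansion of the Lax--Milgram solutions $h_j$ in $\omega^2$ and $\delta$, with leading term $\ell_j\inv\mathds{1}_{D_j}$, the zeroth-order Dirichlet-to-Neumann data supplying the flux conditions, and an explicit solution of the resulting ODE $q\prii+\gamma q\pri=c_i$ on each resonator determining the integral $c_i=\int_{D_i}q_j\dd x$. The only notable (and welcome) difference is the endgame: you identify $c_i$ with $-(V\inv\capmatg V\inv)_{ij}$ directly through the weighted boundary-integral definition \eqref{eq: def cap mat}, whereas the paper reaches the same conclusion by matching against the explicit entries \eqref{eq: explicit coef cap mat} ``after some careful algebraic manipulation''.
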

\begin{proof}
    We need to show that unique solution $h_j(\omega, \delta)$ with $1\leq j\leq N$ to the variational problem \eqref{eq:formulation in weak form} has the given integral asymptotic behaviour as $\omega,\delta \to 0$.
    From the definition of $a_{\omega, \delta}$, the function $h_j\equiv
    h_j^{\alpha}(\omega,\delta)$ satisfies the
    following differential equation written in strong form:
   \begin{equation}
   \label{eq: ODE hj strong}
    \left\{
    \begin{aligned}
     -h_j\prii -\gamma h_j\pri - \frac{\omega^2}{v_b^2} h_j + \sum_{i=1}^{N}
        \left(\int_{x_i^{\iL}}^{x_i^{\iR}} h_j \dd x\right) \mathds{1}_{(x_i^{\iL},x_i^{\iR})}, &= 
        \mathds{1}_{(x_j^{\iL},x_j^{\iR})} &  \text{ in }
        \bigcup_{i=1}^{N}(x_i^{\iL},x_i^{\iR}),\\
        -\frac{\dd h_j}{\dd x}(x_i^{\iL}), &= \delta
        \mathcal{T}^{\frac{\omega}{v}}[h_j]_i^{\iL} & \text{ for all } 1 \leq i \leq
        N,\\
        \frac{\dd h_j}{\dd x}(x_i^{\iR}) &= \delta
        \mathcal{T}^{\frac{\omega}{v}}[h_j]_i^{\iR} & \text{ for all } 1 \leq i \leq
        N.\\
    \end{aligned}
    \right.
    \end{equation}
    Since $\mathcal{T}^{\frac{\omega}{v}}$ is analytic in $\omega^{2}$, it
    follows that $h_j(\omega,\delta)$
    is analytic in $\omega^{2}$ and $\delta$: there exist functions $(h_{j,2p,k})_{p{\geq}0, k{\geq}0}$ such that $h_j(\omega,\delta)$ can be written as the following convergent series in $H^1(D)$:
    \begin{equation}
        h_j(\omega,\delta) = \sum_{p,k=0}^{+\infty} \omega^{2p}\delta^k
        h_{j,2p,k}.
    \end{equation}
    By using the power series expansion of the Dirichlet-to-Neumann map and identifying powers of $\omega$ and
    $\delta$, we obtain the following equations characterising the functions
    $(h_{j,2p,k})_{p{\geq}0, k{\geq}0}$:
   \begin{align}
   \begin{dcases}
       -h_{j,2p,k}\prii -\gamma h_j\pri + \sum_{i=1}^{N}\left(\int_{x_i^{\iL}}^{x_i^{\iR}}
        h_{j,2p,k} \dd x\right)
    \mathds{1}_{(x_i^{\iL},x_i^{\iR})} =  \frac{1}{v_j^2} h_{j,2p-2,k} +
        \mathds{1}_{(x_j^{\iL},x_j^{\iR})}  \delta_{0p} \delta_{0k}  & \text{ in }
      D,
        \\ -\frac{\dd
        h_{j,2p,k}}{\dd x}(x_i^{\iL}) = \sum_{n=0}^p \frac{1}{v^{2n}}
        \mathcal{T}_{2n}[h_{j,2p-2n,k-1}]_i^{\iL}, &1 \leq i \leq N,\\
        \frac{\dd
        h_{j,2p,k}}{\dd x}(x_i^{\iR}) = \sum_{n=0}^p \frac{1}{v^{2n}}
        \mathcal{T}_{2n}[h_{j,2p-2n,k-1}]_i^{\iR}, &1 \leq i \leq N,\\
        \end{dcases}
    \end{align}
   with the convention  that $h_{j,2p,k}=0$ for negative indices $p$ and $k$. 
    It can then be easily obtained by induction that
    \begin{align*}
        h_{j,2p,0} = \frac{\mathds{1}_{(x_j^{\iL},x_j^{\iR})}}{v_j^{2p} \ell_j^{p+1}}\quad \text{ for
        any } p \geq 0,\quad 1\leq j\leq N.
    \end{align*}
   Then, for $p=0$ and $k=1$, we find that $h_{j,0,1}$ satisfies
   \begin{equation}
   \label{eq:ODEhp0k1}\left\{
   \begin{aligned}
     -h_{j,0,1}\prii  -\gamma h_{j,0,1}\pri +\sum_{i=1}^{N} \left(\int_{x_i^{\iL}}^{x_i^{\iR}}
       h_{j,0,1} \dd x\right) \mathds{1}_{(x_i^{\iL},x_i^{\iR})} &= 0    & \text{ in }
       D,\\
    -\frac{\dd h_{j,0,1}}{\dd x}(x_i^{\iL}) &= \mathcal{T}_0[h_{j,0,0}]_i^{\iL} & \text{ for all } 1 \leq i \leq N, \\
    \frac{\dd h_{j,0,1}}{\dd x}(x_i^{\iR}) &= \mathcal{T}_0[h_{j,0,0}]_i^{\iR} & \text{ for all } 1 \leq i \leq N. \\
    \end{aligned}
    \right.
   \end{equation}
    From \eqref{eq: DTN T0} with
    $f_{i}^{\iLR}:=h_{j,0,0}(x_i^{\iLR})=\delta_{ij}/\ell_j$ for $1\leq i\leq N$, we obtain
    \begin{align}
        \label{eq:bdy condition hj01}
        \left\{\begin{aligned}
           \mathcal{T}_0[h_{j,0,0}]_1^{\iL}
           &=0,&&\\
           \mathcal{T}_0[h_{j,0,0}]_i^{\iL} &
           =-\frac{1}{\ell_j}\frac{1}{s_{ i-1 }}\left(\delta_{ij}-\delta_{(i-1)j}\right)
           &&\text{
       for }2\leq i\leq N,\\
   \mathcal{T}_0[h_{j,0,0}]_i^{\iR}& =\frac{1}{\ell_j}\frac{1}{s_{i}}\left(\delta_{(i+1)j}-\delta_{ij}\right)
           &&            \text{ for }1\leq i\leq N-1,\\
           \mathcal{T}_0[h_{j,0,0}]_N^{\iR}
           &=0 .&& 
       \end{aligned}\right.
       \end{align}
    The general solution for the ODE
    $$
    -y\prii-ay\pri + b =0
    $$
    is given by
    $$
    y(x) = \frac{\kappa_1 e^{-ax}}{a} + \frac{b x}{a} + \kappa_2
    $$
    for two constants $\kappa_1$ and $\kappa_2$. So the solution to \eqref{eq:ODEhp0k1} on $\mathds{1}_{(x_i^{\iL},x_i^{\iR})}$ is given by
    \begin{align*}
        h_{j,0,1} = \frac{\kappa_1 e^{-\gamma x}}{\gamma} + \frac{x}{\gamma}\int_{x_i^{\iL}}^{x_i^{\iR}}
        h_{j,0,1} \dd x + \kappa_2,
    \end{align*}
    having derivative
    \begin{align*}
        \frac{\dd}{\dd x}h_{j,0,1} = -\kappa_1 e^{-\gamma x} + \frac{1}{\gamma}\int_{x_i^{\iL}}^{x_i^{\iR}}
        h_{j,0,1} \dd x.
    \end{align*}
    Imposing the transmission conditions on the derivatives, 
    \begin{align*}
        -\kappa_1 e^{-\gamma x_i^{\iL}} + \frac{1}{\gamma}\int_{x_i^{\iL}}^{x_i^{\iR}}
        h_{j,0,1} \dd x = -\mathcal{T}_0[h_{j,0,0}]_i^{\iL},\\
        -\kappa_1 e^{-\gamma x_i^{\iR}} + \frac{1}{\gamma}\int_{x_i^{\iL}}^{x_i^{\iR}}
        h_{j,0,1} \dd x = \mathcal{T}_0[h_{j,0,0}]_i^{\iR},\\
    \end{align*}
    we obtain a value for the integral 
    \begin{align*}
        \int_{x_i^{\iL}}^{x_i^{\iR}}
        h_{j,0,1} \dd x = \gamma\frac{\mathcal{T}_0[h_{j,0,0}]_i^{\iR}e^{\gamma\ell_i} - \mathcal{T}_0[h_{j,0,0}]_i^{\iL}}{(1-e^{\gamma\ell_i})},
    \end{align*}
    which combined with \eqref{eq:bdy condition hj01} yields the result after some careful algebraic manipulation.
\end{proof}

The following corollary, whose proof is similar to the analogous results in \cite{ammari.barandun.ea2023Edge, feppon.ammari2022Modal}, describes how the gauge capacitance matrix characterises the non-trivial solutions to \eqref{eq:coupled ods}.
\begin{corollary}[Discrete approximations of the eigenfrequencies and eigenmodes]\label{cor: approx via eva eve}
    The $N$ subwavelength eigenfrequencies $\omega_i$ satisfy, as $\delta\to0$,
    \begin{align*}
        \omega_i =  v_b \sqrt{\delta\lambda_i} + \BO(\delta),
    \end{align*}
    where $(\lambda_i)_{1\leq i\leq N}$ are the eigenvalues of the eigenvalue problem
\begin{equation}
\label{eq:eigevalue problem capacitance matrix}
\capmatg\bm a_i = \lambda_i V \bm a_i,\qquad 1\leq i\leq N.
\end{equation}
Furthermore, let $u_i$ be a subwavelength eigenmode corresponding to $\omega_i$ and let $\bm a_i$ be the corresponding eigenvector of $\capmatg$. Then
        \begin{align*}
            u_i(x) = \sum_j \bm a_i^{(j)}V_j(x) + \BO(\delta),
        \end{align*}
        where $V_j$ are defined by \eqref{eq: def V_i} and $\bm a^{(j)}$ denotes the $j$-th entry of the eigenvector.
\end{corollary}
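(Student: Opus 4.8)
The plan is to feed the asymptotic expansion \eqref{eq: integral asymptotic expansion hj} of \cref{prop: asymptotic of hj} into the singularity criterion of \cref{lemma: I-capmat=0} and then run a standard analytic perturbation argument. By \cref{lemma: I-capmat=0}, a value $\omega$ is a subwavelength eigenfrequency exactly when $I-\exactcapmat(\omega,\delta)$ is singular, and \eqref{eq: integral asymptotic expansion hj} gives
\[
I-\exactcapmat(\omega,\delta) = -\frac{\omega^2}{v_b^2}V\inv + \delta\, V\inv \capmatg V\inv + \BO\big((\omega+\delta)^2\big).
\]
Since the subwavelength regime forces $\omega\to0$ with $\omega^2=\BO(\delta)$, I would introduce the rescaling $\omega^2 = v_b^2\,\delta\lambda$. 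Because $\exactcapmat$ is analytic in $\omega^2$ and $\delta$, every contribution beyond the three displayed terms carries a factor $(\omega^2)^p\delta^k$ with $p+k\geq2$ and is therefore $\BO(\delta^2)$ under this scaling. Factoring out the invertible matrix $\delta V\inv$ then reduces the singularity condition to
\[
\det\!\big(\capmatg V\inv - \lambda I + \BO(\delta)\big)=0 .
\]

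At $\delta=0$ this is the characteristic equation of $\capmatg V\inv$, whose roots are precisely the eigenvalues $\lambda_i$ of the generalised eigenvalue problem \eqref{eq:eigevalue problem capacitance matrix}: indeed $\capmatg\bm a=\lambda V\bm a$ is equivalent to $V\inv\capmatg\,\bm a=\lambda\bm a$, and $V\inv\capmatg$ and $\capmatg V\inv$ share their spectrum. The left-hand side is a degree-$N$ polynomial in $\lambda$ with coefficients analytic in $\delta$, so continuity of its roots yields $\lambda_i(\delta)=\lambda_i+\BO(\delta)$; unwinding the rescaling and taking the branch with non-negative real part gives $\omega_i = v_b\sqrt{\delta\lambda_i(\delta)} = v_b\sqrt{\delta\lambda_i}+\BO(\delta)$, the claimed expansion.

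For the eigenmode I would recall from the proof of \cref{lemma: I-capmat=0} that a non-trivial solution is reconstructed as $u=\sum_j x_j\,h_j(\omega,\delta)$, where $\bm x=(x_j)$ spans the kernel of $I-\exactcapmat(\omega,\delta)$ and $x_j=\int_{D_j}u\,\dd x$. The computation above identifies this kernel vector, to leading order, as an eigenvector of $\capmatg V\inv$, namely $\bm x=V\bm a_i$, i.e.\ $x_j=\ell_j\,\bm a_i^{(j)}$. Using the leading term $h_{j,0,0}=\mathds{1}_{D_j}/\ell_j$ computed in the proof of \cref{prop: asymptotic of hj}, this gives $u=\sum_j \bm a_i^{(j)}\mathds{1}_{D_j}+\BO(\delta)$ inside $D$; that is, $u$ is, to leading order, constant and equal to $\bm a_i^{(j)}$ on $D_j$. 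Since $V_j$ from \eqref{eq: def V_i} equals $1$ on $D_j$ and vanishes on the other resonators, this matches $\sum_j \bm a_i^{(j)}V_j$ inside $D$. To extend the identity to all of $\R$ I would feed these boundary values into the outer solution \eqref{eq: outer solution}: as $k=\omega/v\to0$ the outer problem \eqref{eq:coupled ods outside} degenerates to the bounded harmonic extension, which is exactly $\sum_j \bm a_i^{(j)}V_j$ outside the resonators, yielding $u_i(x)=\sum_j \bm a_i^{(j)}V_j(x)+\BO(\delta)$ on $\R$.

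The main obstacle is the rigorous error control in the perturbation step: one must track the remainder $\BO((\omega+\delta)^2)$ through the rescaling and the determinant and confirm that it perturbs the roots only at order $\delta$. This is routine when the $\lambda_i$ are simple; the tridiagonal structure of $\capmatg$ with nonzero off-diagonals makes $\capmatg V\inv$ non-derogatory, so degeneracies are generically absent, but repeated eigenvalues would require Puiseux expansions and more careful bookkeeping. One must also verify that the kernel vectors converge to the corresponding eigenvectors $\bm a_i$ (not merely that the eigenvalues match) and that the outer extension contributes an $\BO(\delta)$ correction uniformly in $x$. The remaining algebra relating $\bm x$ to $\bm a_i$ follows the template of \cite{ammari.barandun.ea2023Edge, feppon.ammari2022Modal} cited in the statement.
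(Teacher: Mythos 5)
Your proposal is correct and follows essentially the same route as the paper, which does not spell out the argument but defers to the analogous results in \cite{ammari.barandun.ea2023Edge, feppon.ammari2022Modal}: substitute the expansion of \cref{prop: asymptotic of hj} into the singularity criterion of \cref{lemma: I-capmat=0}, rescale $\omega^2=v_b^2\delta\lambda$ to reduce to the generalised eigenvalue problem \eqref{eq:eigevalue problem capacitance matrix}, and reconstruct the eigenmode from $u=\sum_j x_j h_j$ with $h_{j,0,0}=\mathds{1}_{D_j}/\ell_j$. Your caveat about simple eigenvalues is apt (the paper makes this assumption explicit only in the quasiperiodic analogue, \cref{prop: asymptotic expansion of freqs via eigenvals}), but it does not change the approach.
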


\begin{lemma}[Properties of the gauge capacitance matrix]\hfill\\
    \begin{enumerate}[(i)]
        \item Recall the Hermitian capacitance matrix $\capmat$, defined in \eqref{eq:C}. Then, for any $1\leq i,j\leq N$ it holds that
        \begin{align*}
        \lim_{\gamma\to 0} \capmat_{i,j}^\gamma = \capmat_{i,j}; 
        \end{align*}
        \item For equally spaced identical resonators: $s_i=s,\ \ell_i=\ell$ for all $1\leq i\leq N$, the gauge capacitance matrix is \emph{almost} Toeplitz taking the form\begin{align}
        \label{eq: cap mat ESI}
            \capmat_{i,j}^\gamma \coloneqq \begin{dcases}
                \frac{\gamma \ell}{s} \frac{1}{1-e^{-\gamma\ell}}, & i=j=1,\\
                \frac{\gamma \ell}{s} \coth(\gamma\ell/2), & 1< i=j< N,\\
                \pm\frac{\gamma \ell}{ s} \frac{1}{1-e^{\pm\gamma\ell}}, & 1\leq i=j \pm 1\leq N,\\
                -\frac{\gamma \ell}{ s} \frac{1}{1-e^{\gamma\ell}}, & i=j=N.\\
            \end{dcases}
            \end{align} \label{cor:item: cap mat for esi}
    \end{enumerate}
    \label{cor: properties of cap mat}
\end{lemma}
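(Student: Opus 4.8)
The plan is to verify both items directly from the explicit formula \eqref{eq: explicit coef cap mat} for the entries of $\capmatg$. The lemma makes no claim that requires any new machinery: item (i) is a limit computation and item (ii) is a specialisation of the general entries to the equally-spaced identical case, so the whole proof reduces to careful bookkeeping with the elementary function $\gamma\ell/(1-e^{\pm\gamma\ell})$ and its limits.

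\textbf{Item (i).} First I would recall from \eqref{eq:C}–\eqref{eq: def V_i} that the classical Hermitian capacitance matrix for this one-dimensional chain is the tridiagonal matrix with $\capmat_{i,i}=1/s_{i-1}+1/s_i$ (with the convention that the boundary terms $1/s_0$ and $1/s_N$ are dropped), $\capmat_{i,i+1}=-1/s_i$, and $\capmat_{i+1,i}=-1/s_i$; this is standard and can be read off by solving \eqref{eq: def V_i} explicitly, since each $V_j$ is piecewise linear between resonators. Then for each of the five cases in \eqref{eq: explicit coef cap mat} I would compute $\lim_{\gamma\to 0}$ using the Taylor expansion $1-e^{\pm\gamma\ell}=\mp\gamma\ell\,(1+\tfrac12(\pm\gamma\ell)+\BO(\gamma^2))$, so that $\frac{\gamma}{s}\frac{\ell}{1-e^{-\gamma\ell}}\to \frac{1}{s}$ and $\frac{\gamma}{s}\frac{\ell}{1-e^{\gamma\ell}}\to -\frac{1}{s}$. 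Applying this to the off-diagonal entries gives $\lim_{\gamma\to 0}\capmat^\gamma_{i,i+1}=-1/s_i$ and $\lim_{\gamma\to 0}\capmat^\gamma_{i+1,i}=-1/s_i$, and applying it to the interior diagonal entry gives $\frac{1}{s_i}+\frac{1}{s_{i-1}}$; the two corner diagonal entries limit to $1/s_1$ and $1/s_{N-1}$ respectively. These agree termwise with $\capmat_{i,j}$, which establishes (i).

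\textbf{Item (ii).} Here I would substitute $s_i=s$ and $\ell_i=\ell$ into \eqref{eq: explicit coef cap mat}. The corner and off-diagonal cases transcribe immediately to the first, third, and fourth lines of \eqref{eq: cap mat ESI}. The only point needing a line of algebra is the interior diagonal entry: the general formula gives $\frac{\gamma\ell}{s}\bigl(\frac{1}{1-e^{-\gamma\ell}}-\frac{1}{1-e^{\gamma\ell}}\bigr)$, and I would combine the two fractions over the common denominator $(1-e^{-\gamma\ell})(1-e^{\gamma\ell})$ and use the identity $\frac{1}{1-e^{-t}}-\frac{1}{1-e^{t}}=\coth(t/2)$ (equivalently, multiply numerator and denominator through by $e^{\gamma\ell/2}$ and recognise the hyperbolic cotangent) to obtain $\frac{\gamma\ell}{s}\coth(\gamma\ell/2)$, matching the second line of \eqref{eq: cap mat ESI}.

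\textbf{Main obstacle.} There is no genuine obstacle, only opportunities for sign and indexing errors: the matrix in \eqref{eq: explicit coef cap mat} is non-symmetric, so the two off-diagonal cases are \emph{not} interchangeable, and the interior-diagonal case mixes a $1-e^{-\gamma\ell}$ factor with a $1-e^{\gamma\ell}$ factor that must be kept distinct. I would therefore take care to track which of $\pm\gamma\ell$ appears in each term and to confirm that the corner entries $(1,1)$ and $(N,N)$ carry the correct sign so that they are consistent with the interior entries in the limit $\gamma\to 0$ from item (i). The $\coth$ identity is the one nontrivial simplification, and verifying it is a two-line hyperbolic computation.
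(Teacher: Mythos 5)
Your proof is correct and is exactly the computation the paper intends: the lemma is stated without proof as a direct consequence of the explicit entries in \eqref{eq: explicit coef cap mat}, and your Taylor expansion of $\gamma\ell/(1-e^{\pm\gamma\ell})$ for item (i) and the identity $\tfrac{1}{1-e^{-t}}-\tfrac{1}{1-e^{t}}=\tfrac{e^{t}+1}{e^{t}-1}=\coth(t/2)$ for item (ii) fill in precisely the omitted bookkeeping. Your identification of the Hermitian limit $\capmat_{i,i}=1/s_{i-1}+1/s_i$, $\capmat_{i,i\pm1}=-1/s_{\min(i,i\pm1)}$ from the piecewise-linear $V_j$ is also the standard one used in the cited references, so nothing is missing.
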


\section{Skin effect and localised interface modes}\label{sec: skin effect}
We now turn to the question of eigenmode condensation. We will begin by proving that the skin effect occurs in a finite system of subwavelength resonators. Then, once this fundamental result has been established, we will consider more exotic structures. For example, we will consider a structure with an interface formed by adjoining two half-structures with opposite signs of $\gamma$, leading to wave localisation along the interface, as shown \cref{sec:interface}.

\subsection{Skin effect}
Using the extensive theory of Toeplitz matrices and operators, we will now show that the system \cref{eq:coupled ods} is a prototypical example of the non-Hermitian skin effect. We will consider a system of equally spaced identical resonators as described in \cref{cor: properties of cap mat}~(iii). That is, a chain of $N$ resonators with $s_i=s$ and $\ell_i=\ell$ for all $1\leq i\leq N$. For this case, the particular structure of the capacitance matrix allows us to apply existing results concerning the spectra of tridiagonal Toeplitz matrices, which we briefly recall in \cref{sec: teoplitz theory}. We will be able to derive a variety of explicit results that would not be possible for more complex situations. Nevertheless, \cref{cor: approx via eva eve} applies for more general cases and the numerical results (which we present for dimers in \cref{sec: dimers}) show that similar results hold for systems with multiple resonators in the unit cell.

Applying the explicit formula for the eigenpairs of perturbed tridiagonal Toeplitz matrices from \cref{lemma: eigenpairs tilde T n} to $\capmatg$, we obtain the 
following theorem.
\begin{theorem} \label{thm:decay}
The eigenvalues of $\capmatg$  are given by
\begin{align}
    \lambda_1 &= 0,\nonumber\\
    \lambda_k &= \frac{\gamma}{s} \coth(\gamma\ell/2)+\frac{2\abs{\gamma}}{s}\frac{e^{\frac{\gamma\ell}{2}}}{\vert e^{\gamma\ell}-1\vert}\cos\left(\frac{\pi}{N}k\right), \quad 2\leq k\leq N . \label{eq: eigenvalues capmat}
\end{align}
Furthermore, the associated eigenvectors $\bm a_k$ satisfy the following inequality, for $2\leq k\leq N$
\begin{align}
\vert \bm a_k^{(i)}\vert \leq \kappa_k e^{-\gamma\ell\frac{i-1}{2}}\quad \text{for all } 1\leq i\leq N \label{eq: decay for eigemodes},
\end{align}
for some $\kappa_k\leq (1+e^{\frac{\gamma\ell}{2}})^2$. 
\end{theorem}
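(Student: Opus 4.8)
The plan is to specialise $\capmatg$ to the equally spaced identical case, where by \cref{cor: properties of cap mat} it becomes the explicit tridiagonal matrix \eqref{eq: cap mat ESI}, and then to read off its eigenpairs from the general formula for perturbed tridiagonal Toeplitz matrices in \cref{lemma: eigenpairs tilde T n}. Writing $\mu := \gamma\ell/s$, the interior diagonal is $b = \mu\coth(\gamma\ell/2)$, the super-diagonal is $c = -\mu/(1-e^{-\gamma\ell})$ and the sub-diagonal is $a = \mu/(1-e^{\gamma\ell})$. Rewriting these using $e^{\gamma\ell/2}$ gives $b = \mu(e^{\gamma\ell}+1)/(e^{\gamma\ell}-1) = -(a+c)$, and one checks directly that the two modified corner entries equal $-c$ (top-left) and $-a$ (bottom-right). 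Hence every row of $\capmatg$ sums to zero, so $\mathbf 1 = (1,\dots,1)^\top$ is an eigenvector with eigenvalue $0$; this establishes $\lambda_1 = 0$.

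Next I would note that this is precisely the perturbed tridiagonal Toeplitz form covered by \cref{lemma: eigenpairs tilde T n}: a Toeplitz interior with entries $(a,b,c)$ and corner diagonal entries shifted to $-c$ and $-a$. The lemma returns the remaining eigenvalues as $\lambda_k = b + 2\sqrt{ac}\,\cos(\pi k/N)$ for $2\le k\le N$. It then suffices to evaluate $\sqrt{ac}$: since $ac = \mu^2 e^{\gamma\ell}/(e^{\gamma\ell}-1)^2 > 0$, its principal square root is $\sqrt{ac} = \abs{\mu}\,e^{\gamma\ell/2}/\abs{e^{\gamma\ell}-1}$. The absolute values appearing in \eqref{eq: eigenvalues capmat} originate here, the positive root being insensitive to the sign of $\gamma$; substituting $b$ and $2\sqrt{ac}$ into the lemma's formula yields \eqref{eq: eigenvalues capmat}.

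For the eigenvector bound, the decisive structural fact is the non-reciprocity ratio $a/c = e^{-\gamma\ell}$. The diagonal similarity $\diag(r,r^2,\dots,r^N)$ with $r := \sqrt{a/c} = e^{-\gamma\ell/2}$ symmetrises the Toeplitz part, so the eigenvectors provided by \cref{lemma: eigenpairs tilde T n} take the form $\bm a_k^{(i)} = r^{\,i}\,(\text{bounded oscillatory factor})$. Since $r^{\,i} = e^{-\gamma\ell i/2}$, this reproduces the geometric factor $e^{-\gamma\ell(i-1)/2}$ of \eqref{eq: decay for eigemodes}. The remaining step is to bound the oscillatory/normalisation prefactor uniformly by $\kappa_k \le (1+e^{\gamma\ell/2})^2$: I would write each component as a sum of two Bloch-type exponentials with the boundary corrections made explicit, estimate the sinusoidal parts by $1$, and control the two contributions by the triangle inequality, which is where the constant $(1+e^{\gamma\ell/2})^2$ emerges.

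The eigenvalues follow almost mechanically once the matrix is recognised in the lemma's normal form, so I expect the main obstacles to be the two verification-and-bookkeeping steps: (i) checking that the corner entries are exactly $-c$ and $-a$, which is what makes \cref{lemma: eigenpairs tilde T n} applicable and simultaneously forces the zero row sums; and (ii) extracting the explicit uniform constant $\kappa_k \le (1+e^{\gamma\ell/2})^2$ from the symmetrising transform together with the trigonometric structure of the eigenvectors.
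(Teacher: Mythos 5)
Your proposal is correct and follows exactly the paper's route: the paper proves this theorem precisely by recognising \eqref{eq: cap mat ESI} as the perturbed tridiagonal Toeplitz matrix $\tilde{T}_N(a,b,c)$ with $a+b+c=0$ and reading off the eigenpairs from \cref{lemma: eigenpairs tilde T n}, with the zero row sums giving $\lambda_1=0$, $\sqrt{ac}$ giving the cosine term, and $a/c=e^{-\gamma\ell}$ giving the exponential decay of the eigenvectors. Your verification of the corner entries and of $a+b+c=0$ is in fact more explicit than the paper, which states no proof beyond the citation of that lemma.
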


Here, $\bm a_k^{(i)}$ denotes the $i$-th component of the $k$-th eigenvector. It is easy to show that the first eigenvector $\bm a_1$ is a constant vector (i.e. $\bm a_1^{(i)}$ is the same for all $i$). From \cref{thm:decay}, we can see that the eigenmodes display exponential decay both with respect to the site index $i$ and the factor $\gamma$. We show this decay in a simulation of a large system of resonators in \cref{fig:skin modes}. Furthermore, \eqref{eq: decay for eigemodes} shows that changing the sign of $\gamma$ swaps the edge at which the localisation occurs.

\begin{figure}[htb]
    \includegraphics[width=\textwidth]{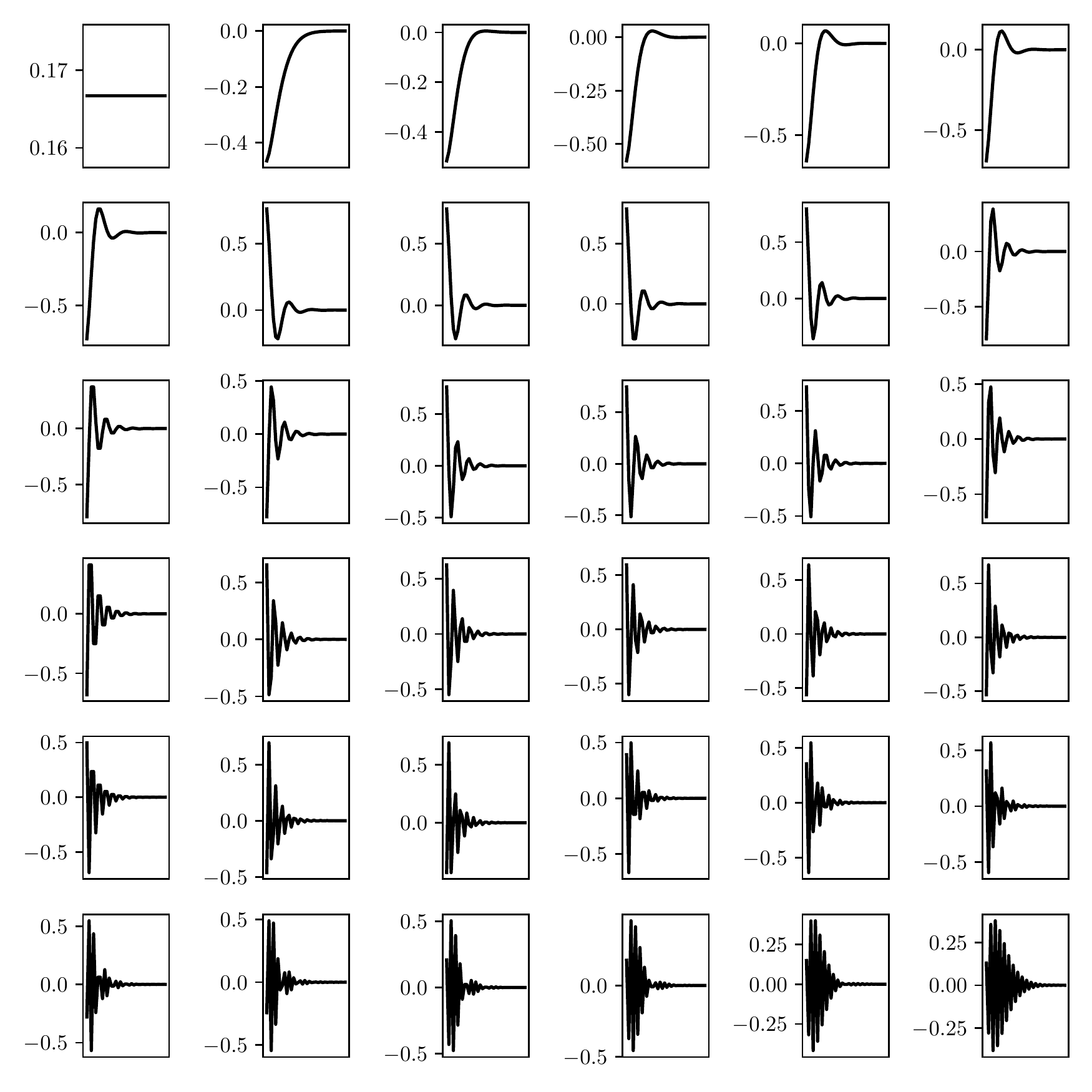}
    \caption{Eigenvector localisation for a system of $N=36$ resonators with $s=\ell=1$ and $\gamma=0.5$. Following \eqref{eq: decay for eigemodes} the eigenmodes present an exponential decay and are thus localised on the left. The top left eigenmode is associated to the trivial eigenvalue 0.} 
    \label{fig:skin modes}
\end{figure}

Of particular interest is the application of Lemmas~\ref{lemma: specturm of toeplitz operator}, \ref{lemma: convergence of speudospectrum topelitz op} and \ref{lemma: facts on eigenvectors teopliz} to the gauge capacitance matrix. The combination of these lemmas shows that the localisation of the eigenvectors of both the finite and semi-infinite capacitance matrix depends on the Fredholm index of the symbol of the associated Toeplitz operator at the corresponding eigenvalue or, equivalently, of its winding number. Specifically, every eigenvector having the corresponding eigenvalue laying the region of the complex plane where the Fredholm index (or equivalently, the winding number of the symbol) is negative, has to be localised.
 This shows the intrinsic topological nature of the skin effect. 

In \cref{fig: winding and pseudospecturm} we display the convergence of the pseudospectrum and the topologically protected region of negative winding number. We remark that the trivial eigenvalue $0$ is outside of the region as the winding number is not defined there. As a consequence, the corresponding eigenvector is not localised.
\begin{figure}[htb]
    \centering
    \begin{subfigure}[t]{0.48\textwidth}
    \centering
    \includegraphics[width=\textwidth]{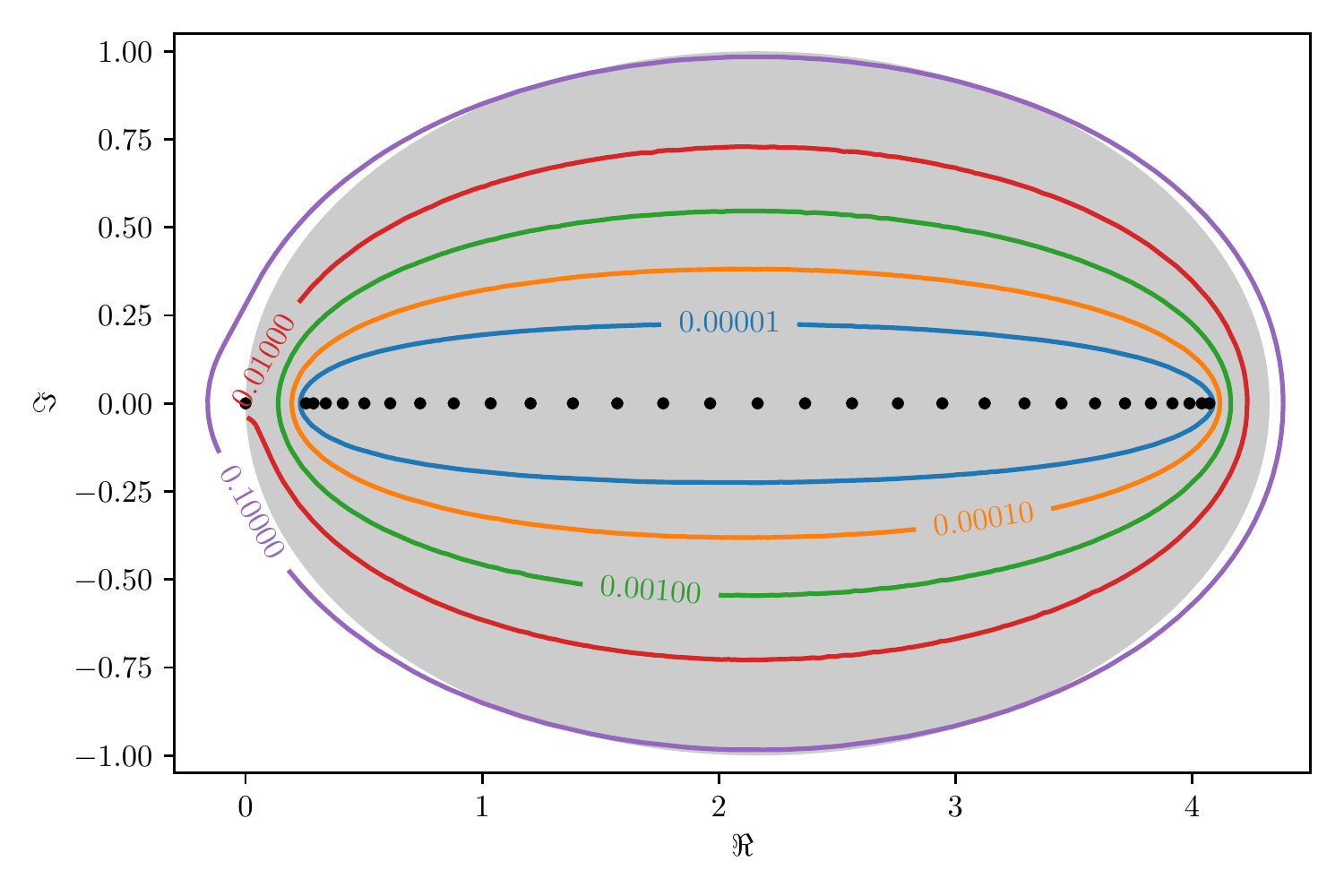}
    \caption{$N=30$.}
    \label{fig:pseudo30}
    \end{subfigure}
    \hfill
    \begin{subfigure}[t]{0.48\textwidth}
    \centering
    \includegraphics[width=\textwidth]{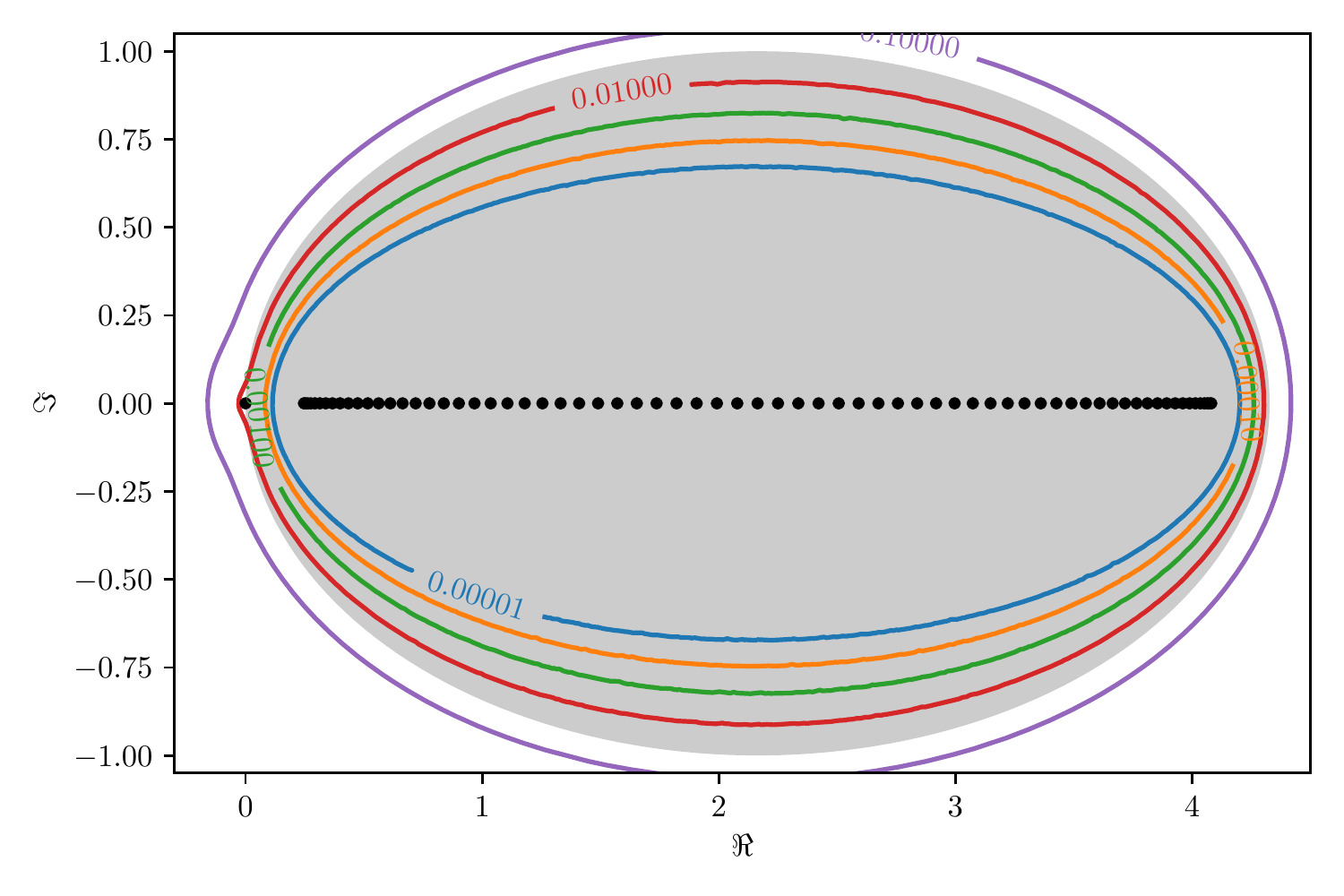}
    \caption{$N=70$.}
    \label{fig:pseudo70}
    \end{subfigure}
\caption{$\epsilon$-pseudospectra of $\capmatg$ for $\epsilon = 10^{-k}$ for $k=1,\dots,5$. Shaded in grey is 
the region where the symbol $f_T$ has negative winding, for $T$ the Toeplitz operator corresponding to the semi-infinite structure. The pseudospectra are computed using \cite{gaulandrePseudoPy}.}
\label{fig: winding and pseudospecturm}
\end{figure}

The explicit formula for eigenvalues from \cref{lemma: eigenpairs tilde T n} also gives some insight on the distribution of the \emph{density of states}. The property of the cosine to have minimal slope when it is close to its maxima and minima, causes eigenvalues to cluster at the edges of the range of the spectrum. This is demonstrated by the non-uniform distribution of the black dots in \cref{fig: winding and pseudospecturm}.

We conclude this section with a qualitative analysis of the spectral decomposition of $\capmatg$. In \cref{fig: eignemode localisation} we show the eigenvector localisation as a function of the site index for finite arrays of various sizes. The localisation of a vector $v$ is measured using the quantity $\|v\|_\infty/\|v\|_2$. After rescaling the site index, we expect there to be some invariance to the array size $N$, based on the formulas of \cref{lemma: eigenpairs tilde T n}. \cref{fig: singular values decay} shows, on the other hand, the singular values of the eigenvector matrix, again with the indices normalised. The number of non-zero singular values is a proxy for the dimension of the range of the matrix. This has an exponential dependence on $N$. As a result, \cref{fig: singular val and localisation decay} shows that as $N$ increases, despite the number of eigenvalues growing like $N$, the rank of the matrix of eigenvectors
is fixed. As a result, we have an increasing number of collinear eigenvectors. This can be interpreted as there existing an exceptional point of ``infinite'' order in the arbitrarily large system.

\begin{figure}[htb]
    \centering
    \begin{subfigure}[t]{0.48\textwidth}
    \centering
    \includegraphics[width=\textwidth]{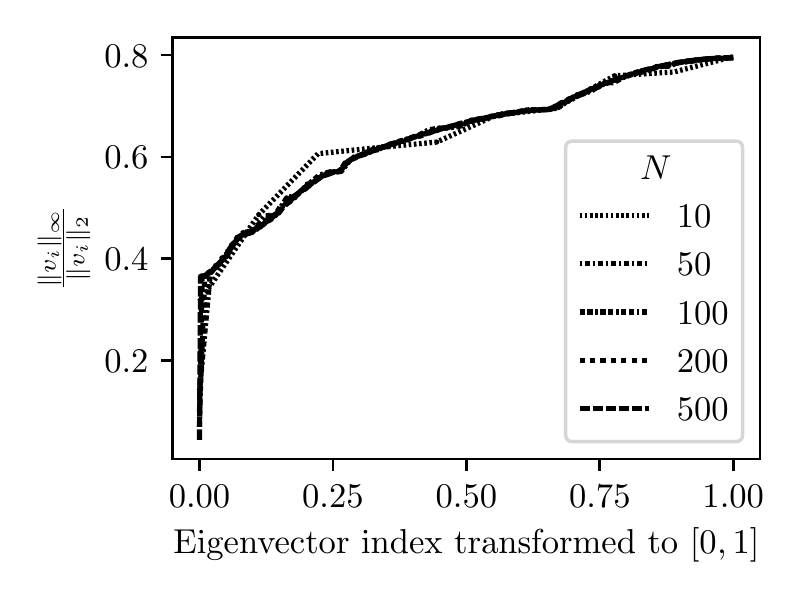}
    \caption{The eigenvector localisation $\frac{\vert v_i\vert_\infty}{\vert v_i\vert_2}$ does not depend on $N$ after rescaling.}
    \label{fig: eignemode localisation}
    \end{subfigure}
    \hfill
    \begin{subfigure}[t]{0.48\textwidth}
    \centering
    \includegraphics[width=\textwidth]{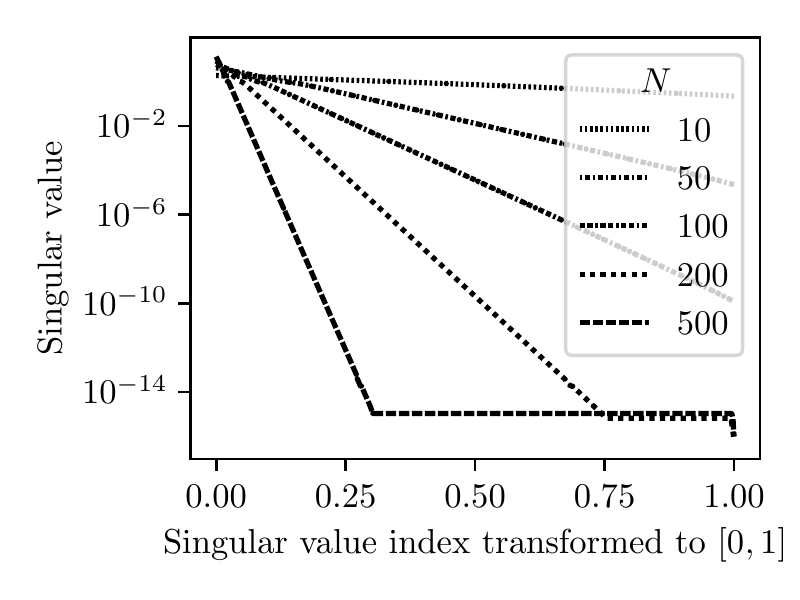}
    \caption{The singular values of the eigenvector matrix decay exponentially in $N$. This shows that the rank of the eigenvector matrix is independent of the size.}
    \label{fig: singular values decay}
    \end{subfigure}
\caption{As $N\to \infty$, the finite structure exhibits an exceptional point of ``infinite'' order: arbitrarily many eigenmodes become close to parallel. Here shown in the case $s=\ell=1$ and $\gamma=0.5$.}
\label{fig: singular val and localisation decay}
\end{figure}

\subsection{Non-Hermitian interface modes between opposing signs of $\gamma$}\label{sec:interface}
We now consider interface modes between two structures where the sign of $\gamma$ is switched from negative (on the left part) to positive (on the right part). Most commonly, localised interface modes are formed by creating a defect in the system's geometric periodicity (see, for example, \cite{ammari.davies.ea2020Topologically}). In non-Hermitian systems based on complex material parameters, similar localised interface modes have been shown to exist in the presence of a defect in the periodicity of the material parameters \cite{ammari.barandun.ea2023Edge,ammari.hiltunen2020Edge}. Given the existence of the skin effect, demonstrated in the previous section, it is reasonable to expect that we might be able to produce a similar localisation effect using systems of resonators with imaginary gauge potential. With this in mind, we consider the following system of $N=2n+1$ resonators:

\begin{align}
    \begin{dcases}
        u\prii(x) \bm+ \gamma u\pri(x)+\frac{\omega^2}{v_b^2}u=0, & x\in\bigcup_{i=1}^{n}(x_i^{\iL},x_i^{\iR}),\\
        u\prii(x) \bm- \gamma u\pri(x)+\frac{\omega^2}{v_b^2}u=0, & x\in\bigcup_{n+1}^N(x_i^{\iL},x_i^{\iR}),\\
        u\prii(x) + \frac{\omega^2}{v^2}u=0, & x\in\R\setminus\bigcup_{i=1}^N(x_i^{\iL},x_i^{\iR}),\\
        u\vert_{\iR}(x^{\iLR}_i) - u\vert_{\iL}(x^{\iLR}_i) = 0, & \text{for all } 1\leq i\leq N,\\
        \left.\frac{\dd u}{\dd x}\right\vert_{\iR}(x^{\iL}_{{i}})=\delta\left.\frac{\dd u}{\dd x}\right\vert_{\iL}(x^{\iL}_{{i}}), & \text{for all } 1\leq i\leq N,\\
        \left.\frac{\dd u}{\dd x}\right\vert_{\iR}(x^{\iR}_{{i}})=\delta\left.\frac{\dd u}{\dd x}\right\vert_{\iR}(x^{\iL}_{{i}}), & \text{for all } 1\leq i\leq N,\\
        \frac{\dd u}{\dd\ \abs{x}}u -\i k u = 0, & x\in(-\infty,x_1^{\iL})\cup (x_N^{\iR},\infty).
    \end{dcases}
\label{eq:coupled ods different gammas}
\end{align}

It is not difficult to see that also with this system we can recover a capacitance matrix for which a similar result as \cref{cor: approx via eva eve} holds. In particular, generalising \eqref{eq: def cap mat}, we get
\begin{align*}
\capmat_{i,j}^{\gamma,-\gamma} =
\begin{dcases}
    \capmat_{i,j}^{\gamma},\quad i \leq n,\\
    \capmat_{i,j}^{-\gamma},\quad i \geq n + 1.
\end{dcases}
\end{align*}

The decay properties of the eigenvectors \eqref{eq: decay for eigemodes} and the symmetry property with respect to $\gamma$ show that this symmetric system \eqref{eq:coupled ods different gammas} has all but two modes localised at the interface. These interface modes are shown in \cref{fig: double_gamma}, superimposed on one another to portray the general trend. 

\begin{figure}[htb]
    
    \centering
    \includegraphics[width=0.5\textwidth]{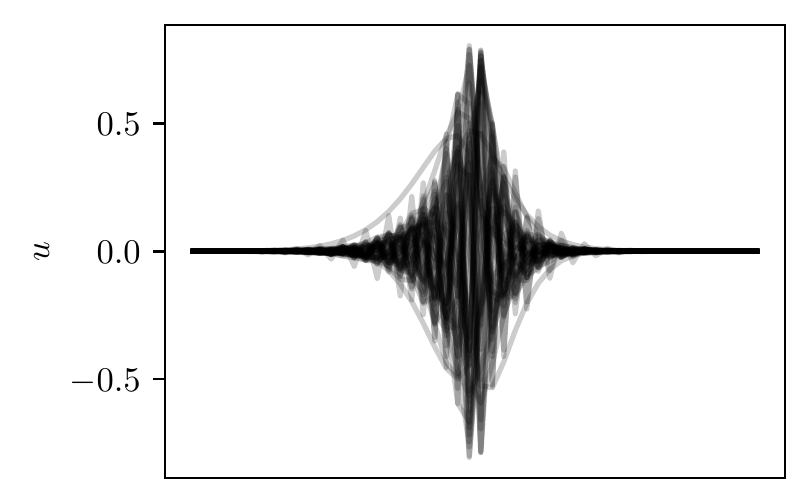}
    
\caption{Plot of all the eigenmodes localised at the interface associated to a system described by \eqref{eq:coupled ods different gammas}. The $x$-axis encodes the site index of the resonators. Simulation performed with a structure of $N=50$ resonators, $\ell=s=1$ and $\gamma=1$. Two trivial eigenmodes are not shown.}
\label{fig: double_gamma}
\end{figure}

\section{Infinite periodic case}\label{sec: periodic case}  %

For the infinite periodic case we consider a one-dimensional system constituted of $N$ periodically repeated disjoint subwavelength resonators. We assume that for some $\ell>0$, $\ell_i = \ell$ for all $i\in\Z$. We use the same notation as in \cite{ammari.barandun.ea2023Edge}. Specifically, $L$ is the size of the unit cell and $\alpha$ denotes the quasiperiodicity parameter of the Floquet-Bloch transform.

\subsection{Standard Brillouin zone} \label{sec: periodic case1}
Similar to the finite case, we are first interested in resonant frequencies $\omega^\alpha(\delta)$ for any $\alpha$ in the first Brillouin zone $Y^*\coloneqq (-\frac{\pi}{L},\frac{\pi}{L}]$. In the periodic case, we call $\alpha\mapsto \omega^\alpha(\delta)$ band functions.

In order to get the quasiperiodic capacitance matrix one needs to apply similar modification as in the finite case to the procedure of \cite{ammari.barandun.ea2023Edge}, where the reader can find the details of the derivation.

\begin{definition}[Quasiperiodic gauge capacitance matrix]
    For $\gamma \in \R^*$, we define the \emph{quasiperiodic gauge capacitance matrix} $\qpcapmatg\in\R^{N\times N}$ by
    \begin{align}
        \capmat_{i,j}^{\gamma,\alpha} &\coloneqq \left(\frac{\gamma}{s_i} \frac{\ell_i}{1-e^{-\gamma \ell_i}} - \frac{\gamma}{s_{i-1}} \frac{\ell_i}{1-e^{\gamma \ell_i}}\right)\delta_{ij} 
        + \left(-\frac{\gamma}{s_i} \frac{\ell_j}{1-e^{-\gamma \ell_j}}\right) \delta_{i(j-1)}
        + \left(\frac{\gamma}{s_j} \frac{\ell_j}{1-e^{\gamma \ell_j}}\right)\delta_{i(j+1)}\nonumber\\
        &+ \left(-e^{-\i \alpha L}\frac{\gamma}{s_N} \frac{\ell_1}{(1-e^{-\gamma \ell_1})}\right) \delta_{iN}\delta_{j1}
        + \left(e^{\i \alpha L}\frac{\gamma}{s_N} \frac{\ell_N}{(1-e^{\gamma \ell_N})}\right) \delta_{i1}\delta_{jN}
        .\label{eq: def qp cap mat}
    \end{align}
\end{definition}
The following results hold. 
\begin{proposition}
    \label{prop: asymptotic expansion of freqs via eigenvals}
     Assume that the eigenvalues of $\qpcapmatg$ are simple. Then the $N$ subwavelength band functions
     $(\alpha \mapsto \omega^{\alpha}_i)_{1\leq i\leq N}$ satisfy, as $\delta\to0$,
     \begin{align*}
         \omega_i^{\alpha} =  v_b\sqrt{\delta\lambda_i^{\alpha}} + \BO(\delta),
     \end{align*}
     where $(\lambda_i^{\alpha})_{1\leq i\leq N}$ are the eigenvalues of the eigenvalue problem
 \begin{equation}
 \qpcapmatg\bm a_i = \lambda_i^{\alpha} V \bm a_i,\qquad 1\leq i\leq N,
 \end{equation}
 where $V$ is defined by (\ref{defV}). We select the $N$ values of $\pm\sqrt{\delta\lambda_i}$ having positive real parts.
 \end{proposition}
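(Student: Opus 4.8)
The plan is to transport the three-step derivation of the finite case — \cref{lemma: I-capmat=0}, \cref{prop: asymptotic of hj} and \cref{cor: approx via eva eve} — to the Floquet--Bloch setting, the only genuinely new ingredient being the quasiperiodic boundary conditions. First I would replace the Dirichlet-to-Neumann map $\mathcal{T}^{k}$ by its $\alpha$-quasiperiodic analogue $\mathcal{T}^{k,\alpha}$, obtained by solving the outer problem \eqref{eq:coupled ods outside} on one unit cell with the quasiperiodic condition $w(x+L)=e^{\i\alpha L}w(x)$ in place of the outgoing conditions at $\pm\infty$. Exactly as in \eqref{eq: Matrix form DTN}--\eqref{eq: DTN T0}, this operator is holomorphic in $k$ near the origin and has a power series $\mathcal{T}^{k,\alpha}=\sum_{n\geq0}k^n\mathcal{T}_n^{\alpha}$; the difference from the finite case is that the leading block $\mathcal{T}_0^{\alpha}$ now carries the wrap-around couplings between the first and last resonators, weighted by $e^{\pm\i\alpha L}$, which are precisely the corner entries $\delta_{iN}\delta_{j1}$ and $\delta_{i1}\delta_{jN}$ appearing in $\qpcapmatg$ in \eqref{eq: def qp cap mat}.

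With this operator in hand, I would set up the weak formulation over the space of $\alpha$-quasiperiodic $H^1$-functions on the unit cell, define the Lax--Milgram basis functions $h_j^{\alpha}(\omega,\delta)$ as in \eqref{eq: def h_j}, and obtain the quasiperiodic version of \cref{lemma: I-capmat=0}: a non-trivial Bloch mode exists if and only if $I-\exactcapmat^{\alpha}(\omega,\delta)$ is singular, where $\exactcapmat^{\alpha}(\omega,\delta)_{ij}=\int_{x_i^{\iL}}^{x_i^{\iR}}h_j^{\alpha}(\omega,\delta)\,\dd x$. Re-running the induction of \cref{prop: asymptotic of hj} verbatim — the only modification being that the boundary data \eqref{eq:bdy condition hj01} now closes up around the cell through $\mathcal{T}_0^{\alpha}$ — produces the expansion
\begin{align*}
    \exactcapmat^{\alpha}(\omega,\delta)=I+\frac{\omega^2}{v_b^2}V\inv-\delta V\inv\qpcapmatg V\inv+\BO((\omega+\delta)^2).
\end{align*}

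It then remains to read off the band functions. Substituting the expansion, $I-\exactcapmat^{\alpha}(\omega,\delta)$ is singular exactly when $\big(-\tfrac{\omega^2}{v_b^2}V\inv+\delta V\inv\qpcapmatg V\inv+\BO((\omega+\delta)^2)\big)\bm x=0$ has a non-zero solution $\bm x$. Multiplying by $V$ on the left and setting $\bm z=V\inv\bm x$, the leading-order condition becomes the generalised eigenvalue problem $\qpcapmatg\bm z=\tfrac{\omega^2}{\delta v_b^2}V\bm z$, so the balance of the two $\BO(\delta)$ contributions forces $\omega^2=\delta v_b^2\lambda_i^{\alpha}$ at leading order. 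Resolving the next order in the resulting scalar characteristic equation and taking the square root on the branch with positive real part yields $\omega_i^{\alpha}=v_b\sqrt{\delta\lambda_i^{\alpha}}+\BO(\delta)$.

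The step that actually uses the simplicity hypothesis, and which I expect to be the only delicate point, is the passage from the determinantal condition $\det(I-\exactcapmat^{\alpha}(\omega,\delta))=0$ to $N$ well-separated, smoothly varying branches $\omega_i^{\alpha}$. After the rescaling $\omega^2=\delta\mu$ the characteristic equation becomes analytic in $\delta$, with leading part $\det(\qpcapmatg-\tfrac{\mu}{v_b^2}V)=0$; when the eigenvalues $\lambda_i^{\alpha}$ of $\qpcapmatg$ are simple each root $\mu=v_b^2\lambda_i^{\alpha}$ is simple, so the implicit function theorem (equivalently, analytic perturbation theory) produces a single smooth branch $\mu_i(\delta)=v_b^2\lambda_i^{\alpha}+\BO(\delta)$ and fixes the $\BO(\delta)$ correction unambiguously. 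This is exactly where the argument fails at an exceptional point, where two branches coalesce, $\qpcapmatg$ ceases to be diagonalisable, and the square-root expansion must give way to a Puiseux series — which is why simplicity is assumed. Everything else is a transcription of the finite-cell computation, with care needed only to keep the remainder bounds uniform in $\alpha$.
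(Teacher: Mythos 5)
Your proposal is correct and follows essentially the route the paper intends: the paper gives no explicit proof of this proposition, instead deferring to "similar modification as in the finite case" of the machinery in \cref{lemma: I-capmat=0}, \cref{prop: asymptotic of hj} and \cref{cor: approx via eva eve}, which is precisely the quasiperiodic transcription you carry out (including the role of the corner entries $e^{\pm\i\alpha L}$ in $\mathcal{T}_0^{\alpha}$ and the use of simplicity to isolate the branches via the implicit function theorem). Your identification of the simplicity hypothesis as the point where the argument would degenerate into a Puiseux expansion at an exceptional point is consistent with the paper's later discussion of \cref{lemma: critical gamma}.
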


\begin{lemma} \label{lemmaN=2}
    For $N=2$ and $\ell_1=\ell_2= \ell$, the eigenvalues of $\qpcapmatg$ are given by 
    \begin{align*}
        \lambda_1^\alpha = \frac{\gamma \left(\left(e^{\gamma }+1\right)
         (s_1+s_2)-\sqrt{2} e^{\gamma/2}
        \sqrt{\left(4s_1s_2
        \cosh (\gamma - \i \alpha  L)+d\right)}\right)
        }{2 \left(e^{\gamma }-1\right) s_1s_2} \ell\\
        \lambda_2^\alpha = \frac{\gamma  
        \left(\left(e^{\gamma }+1\right) 
        (s_1+s_2)+\sqrt{2} e^{\gamma/2} \sqrt{
        \left(4 s_1 s_2 \cosh (\gamma -\i \alpha  L)+d\right)}\right)
        }{2 \left(e^{\gamma }-1\right) s_1 s_2} \ell,
    \end{align*}
    where $d = \cosh (\gamma)(s_1-s_2)^2+(s_1+s_2)^2$. On the other hand, for $N=1$ (i.e., when $s_1=s_2=s$ and $\ell_1=\ell_2=\ell; L=\ell+s$), we have
    \begin{align} \label{4.2b}
        \lambda_1^\alpha = \lambda_2^\alpha = \frac{\gamma \left(e^{\gamma} - e^{\i L \alpha} + 1\right) - e^{\gamma}e^{- \i L \alpha}}{s \left(e^{\gamma} - 1\right)} \ell,
    \end{align}
    and $ \qpcapmatg$ can be considered as a scalar. 
\end{lemma}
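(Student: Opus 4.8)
The plan is to prove both statements by direct computation from the definition \eqref{eq: def qp cap mat}, since for $N\leq 2$ the quasiperiodic gauge capacitance matrix is small enough to diagonalise by hand. For $N=2$ the only delicate point is bookkeeping: because the indices in \eqref{eq: def qp cap mat} are read modulo $N$, the diagonal entry for $i=1$ uses $s_0 = s_N = s_2$, and—crucially—both the tridiagonal terms and the two corner (quasiperiodic coupling) terms land in the off-diagonal slots $(1,2)$ and $(2,1)$, so those entries are genuine sums of two contributions. To keep the algebra manageable I would abbreviate $A := \gamma\ell/(1-e^{-\gamma\ell})$ and $B := \gamma\ell/(1-e^{\gamma\ell})$, record the identities $A = -e^{\gamma\ell}B$ and $A-B = \gamma\ell(e^{\gamma\ell}+1)/(e^{\gamma\ell}-1)$, and write
\[
\qpcapmatg = \begin{pmatrix}
A/s_1 - B/s_2 & -A/s_1 + e^{\i\alpha L}B/s_2 \\
B/s_1 - e^{-\i\alpha L}A/s_2 & A/s_2 - B/s_1
\end{pmatrix}.
\]

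Next I would compute the trace and determinant and invoke the quadratic formula $\lambda^\alpha_{1,2} = \tfrac12\bigl(\tr \pm \sqrt{\tr^2 - 4\det}\bigr)$. The trace is immediate, $\tr\qpcapmatg = (A-B)(s_1+s_2)/(s_1 s_2)$, which already reproduces the $(e^\gamma+1)(s_1+s_2)$ prefactor in the statement after substituting $A-B$. The determinant is where the structure emerges: on expanding $(1,1)(2,2)-(1,2)(2,1)$ the mixed terms proportional to $AB$ cancel identically, leaving $\det\qpcapmatg = \bigl[A^2(1-e^{-\i\alpha L}) + B^2(1-e^{\i\alpha L})\bigr]/(s_1 s_2)$.

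The hard part—really the only nontrivial step—will be simplifying the discriminant $\tr^2 - 4\det$ into the advertised closed form. Here I would substitute $A = e^{\gamma\ell}c$, $B = -c$ with $c := \gamma\ell/(e^{\gamma\ell}-1)$, so that the quasiperiodic cross-terms collapse via $e^{2\gamma\ell}e^{-\i\alpha L} + e^{\i\alpha L} = 2e^{\gamma\ell}\cosh(\gamma\ell - \i\alpha L)$; recognising this is exactly what manufactures the $\cosh(\gamma - \i\alpha L)$ of the statement. After writing $(s_1+s_2)^2 = (s_1-s_2)^2 + 4s_1 s_2$ and using $(e^{\gamma\ell}+1)^2 = 2e^{\gamma\ell}(\cosh(\gamma\ell)+1)$ to pull a common factor $2e^{\gamma\ell}$ out of the bracket, the discriminant becomes $2e^{\gamma\ell}\bigl[\,4s_1 s_2\cosh(\gamma\ell - \i\alpha L) + d\,\bigr]$ with $d = \cosh(\gamma\ell)(s_1-s_2)^2 + (s_1+s_2)^2$; taking the square root produces the $\sqrt2\,e^{\gamma/2}$ prefactor and the radicand in the statement (the $-$ sign giving $\lambda_1^\alpha$ and the $+$ sign $\lambda_2^\alpha$). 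I expect this factorisation—spotting the $\cosh$ structure and extracting $2e^{\gamma\ell}$—to be the only step requiring care; everything else is routine substitution.

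Finally, for $N=1$ the matrix degenerates to a scalar: the superdiagonal and subdiagonal terms disappear, while both corner terms and the diagonal all land on the single $(1,1)$ entry (since $i=j=N=1$). Substituting $s_1=s_2=s$ directly into \eqref{eq: def qp cap mat} and simplifying with the same $A,B$ identities gives \eqref{4.2b} at once. I would also remark that this is the expected monomer limit: setting $s_1=s_2=s$ halves the physical period, $L\mapsto \ell+s$, folding the two dimer bands onto the single scalar band, and one can cross-check consistency by substituting $s_1=s_2=s$ into the $N=2$ formulas (where $d=4s^2$ and the radical collapses through $\cosh z + 1 = 2\cosh^2(z/2)$).
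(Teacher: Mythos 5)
Your computation is correct and complete; the paper states this lemma without any proof, so there is nothing to compare against — your direct route (assemble the $2\times 2$ matrix from \eqref{eq: def qp cap mat} with the periodic index convention $s_0=s_2$, compute trace and determinant, observe the cancellation of the $AB$ cross-terms, and factor the discriminant as $2e^{\gamma\ell}\bigl[4s_1s_2\cosh(\gamma\ell-\i\alpha L)+d\bigr]$ via $e^{2\gamma\ell}e^{-\i\alpha L}+e^{\i\alpha L}=2e^{\gamma\ell}\cosh(\gamma\ell-\i\alpha L)$) is evidently the intended derivation, and I have verified each step. One observation your computation surfaces: for $N=1$ you obtain the numerator $\gamma\ell\left(e^{\gamma}-e^{\i L\alpha}+1-e^{\gamma}e^{-\i L\alpha}\right)$, whereas \eqref{4.2b} as printed places the closing parenthesis after the ``$+1$'', leaving the last term outside the factor $\gamma$; this is a misprint in the statement, as your cross-check (folding the $N=2$ bands at $s_1=s_2$ using $\cosh z+1=2\cosh^2(z/2)$ and $L\mapsto L/2$) confirms.
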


The band theory of $\qpcapmatg$ is very rich due to the non-Hermitian structure of the matrix. Indeed one might rapidly see that for some general $1\leq i,j\leq N$
\begin{align*}
    \capmat_{i,j}^{\gamma,\alpha} \neq \overline{\capmat_{j,i}^{\gamma,\alpha}}.
\end{align*}

From now we will consider systems of periodically repeated dimers, that is $N=2$, and $\ell_i=\ell=1$. The band functions $\omega^\alpha$ present very interesting symmetries as the following lemma shows. This behaviour is observed in \cref{fig: bands and eigenvalue winding}, where we see the symmetry of the real parts and antisymmetry of the imaginary parts.

\begin{lemma}\label{lemma: capmat -alpha is conjugation}
    Let $\gamma\in\R^*$. Then, 
    \begin{align*}
        \capmat^{\gamma,-\alpha} = \overline{\capmat^{\gamma,\alpha}}.
    \end{align*}
    Therefore, the real part of the eigenvalues is symmetric and the imaginary part is antisymmetric with respect to $\alpha\mapsto -\alpha$. 
\end{lemma}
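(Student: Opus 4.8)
The plan is to prove the claimed identity $\capmat^{\gamma,-\alpha} = \overline{\capmat^{\gamma,\alpha}}$ directly from the explicit entry-by-entry formula \eqref{eq: def qp cap mat}, and then deduce the stated symmetry of the eigenvalues as a formal consequence. First I would observe that the dependence of $\capmat^{\gamma,\alpha}$ on $\alpha$ enters \emph{only} through the two corner entries, via the scalar factors $e^{-\i\alpha L}$ (in the $(N,1)$ entry) and $e^{\i\alpha L}$ (in the $(1,N)$ entry); every other entry is a real number built from $\gamma$, the lengths $\ell_i$ and the spacings $s_i$, all of which are real for $\gamma\in\R^*$. Consequently, taking the complex conjugate of $\capmat^{\gamma,\alpha}$ leaves all interior and diagonal entries unchanged (they are already real), while it conjugates the two corner phases: $\overline{e^{-\i\alpha L}} = e^{\i\alpha L}$ and $\overline{e^{\i\alpha L}} = e^{-\i\alpha L}$, using that $\alpha,L\in\R$.

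The key computation is then to match this against the substitution $\alpha \mapsto -\alpha$. Replacing $\alpha$ by $-\alpha$ in the $(N,1)$ corner factor turns $e^{-\i\alpha L}$ into $e^{\i\alpha L}$, and in the $(1,N)$ corner factor turns $e^{\i\alpha L}$ into $e^{-\i\alpha L}$; these are exactly the values produced by conjugation in the previous step. Since the two operations agree on the corner entries and act trivially (identically) on all remaining entries, I conclude $\capmat^{\gamma,-\alpha}_{i,j} = \overline{\capmat^{\gamma,\alpha}_{i,j}}$ for every $1\leq i,j\leq N$, which is the asserted matrix identity. I would note the mild caveat that this matching is transparent in the displayed $N$-term formula; for a fully general periodic unit cell one should check that the off-diagonal real prefactors such as $\tfrac{\gamma}{s_N}\tfrac{\ell_1}{1-e^{-\gamma\ell_1}}$ are indeed real, which they are since $\gamma,\ell_i,s_i\in\R$.

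For the spectral consequence, the plan is to use the standard fact that a matrix and its entrywise complex conjugate have complex-conjugate spectra: if $\capmatg[{}^{,-\alpha}]$ has characteristic polynomial $p_{-\alpha}(\lambda)=\det(\lambda I - \capmat^{\gamma,-\alpha})$, then from $\capmat^{\gamma,-\alpha}=\overline{\capmat^{\gamma,\alpha}}$ and the fact that the determinant commutes with entrywise conjugation we get $p_{-\alpha}(\lambda) = \overline{p_{\alpha}(\bar\lambda)}$. Hence $\lambda$ is an eigenvalue of $\capmat^{\gamma,-\alpha}$ if and only if $\bar\lambda$ is an eigenvalue of $\capmat^{\gamma,\alpha}$, so the eigenvalue set at $-\alpha$ is the complex conjugate of the eigenvalue set at $\alpha$. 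Writing $\lambda_i^{-\alpha} = \overline{\lambda_i^{\alpha}}$ (under a consistent labelling of the branches) immediately yields that $\Re\lambda_i^{\alpha}$ is even and $\Im\lambda_i^{\alpha}$ is odd under $\alpha\mapsto-\alpha$, as claimed.

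This argument is essentially bookkeeping, so there is no serious analytic obstacle; the only point requiring a little care is the \emph{labelling} of eigenvalues across the map $\alpha\mapsto-\alpha$. The clean statement ``real parts symmetric, imaginary parts antisymmetric'' presumes one can follow each band $\alpha\mapsto\lambda_i^\alpha$ continuously and pair it with its conjugate; this is unambiguous away from band crossings (exceptional points), where the simplicity-of-eigenvalues hypothesis used elsewhere in the excerpt guarantees the branches stay distinct, but at a crossing the individual labels may be exchanged. I would therefore phrase the eigenvalue conclusion at the level of the spectrum as a \emph{set} (the multiset of eigenvalues at $-\alpha$ is the conjugate of that at $\alpha$), from which the symmetry/antisymmetry of real and imaginary parts follows, and note that a continuous branchwise labelling then gives the pointwise statement wherever the bands do not cross.
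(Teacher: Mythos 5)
Your proof is correct and is exactly the argument the paper leaves implicit (it states the lemma without proof, since the identity is immediate from the definition \eqref{eq: def qp cap mat}): the only $\alpha$-dependence sits in the two corner phases $e^{\mp\i\alpha L}$, whose conjugation coincides with the substitution $\alpha\mapsto-\alpha$, and conjugating a matrix conjugates its spectrum (the real diagonal matrix $V$ in the generalised eigenvalue problem does not disturb this). Your remark about branch labelling at eigenvalue crossings is a sensible refinement that the paper does not spell out.
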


\begin{figure}[htb]
    \centering
    \begin{subfigure}[t]{0.48\textwidth}
        \centering
        \includegraphics[width=\textwidth]{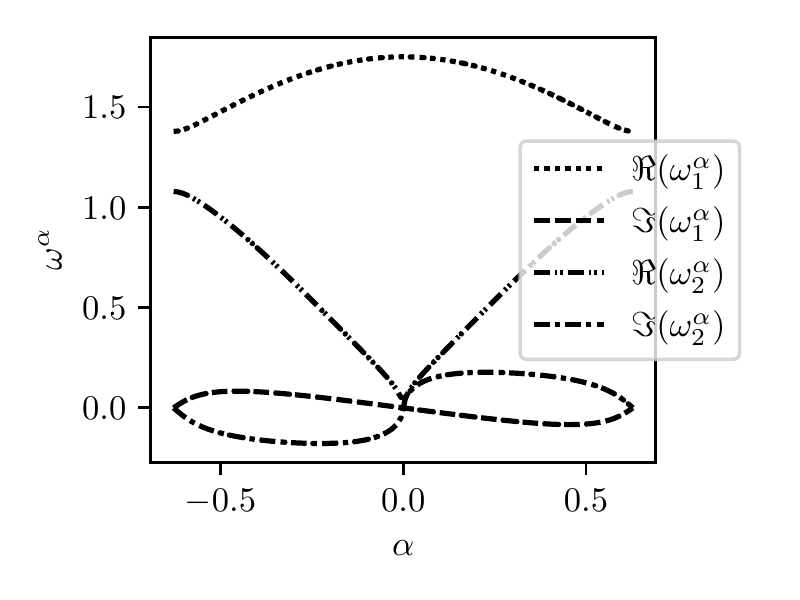}
        \caption{Real and imaginary part of $\omega_i^\alpha$ for $i=1,2$. We observe the symmetries of \cref{lemma: capmat -alpha is conjugation}: the real parts are symmetric and the imaginary parts antisymmetric with respect to $\alpha\mapsto -\alpha$.}
        \label{fig: re and im of band}
    \end{subfigure}
    \hfill
    \begin{subfigure}[t]{0.48\textwidth}
        \centering
        \includegraphics[width=\textwidth]{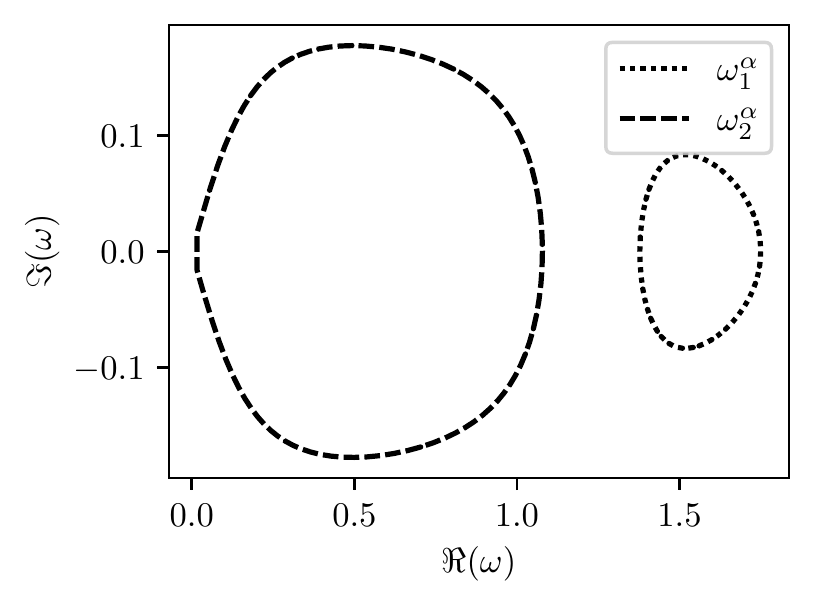}
        \caption{Bands $\alpha\mapsto\omega_i^\alpha$ plotted in complex plane.}
        \label{fig: eigenvalue winding in C}
    \end{subfigure}
    \caption{Band functions of a periodically repeated dimer with $s_1=1$,  $s_2=2$ and $\gamma=0.5$. For both $\omega_1^\alpha$ and $\omega_2^\alpha$ we observe points $E_{1,2}\in\C$ with non-zero winding (see \cref{prop: winding of eigenvalues}).}
    \label{fig: bands and eigenvalue winding}
\end{figure}

The following lemma establishes the exceptional points of the system, where 
the eigenvalues and the eigenvectors of the quasiperiodic gauge capacitance matrix simultaneously coalesce.

\begin{lemma}\label{lemma: critical gamma}
    Let $N=2$ and $0<\gamma$. Then the two eigenvalues of $\qpcapmatg$ coalesce if and only if $\alpha=\pm\frac{\pi}{L}$ and
    \begin{align}
        \cosh (\gamma) \left(s_1^2-6 s_1 s_2+s_2^2\right)+(s_1+s_2)^2 = 0.
        \label{eq: critical gamma}
    \end{align}
    The geometrical multiplicity of the eigenvalue is then $1$.
\end{lemma}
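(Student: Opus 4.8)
The plan is to read off everything from the closed-form eigenvalues supplied by \cref{lemmaN=2}. Since the two eigenvalues differ only in the sign in front of the square root, they coalesce precisely when the radicand vanishes, i.e. when
\begin{equation}
4 s_1 s_2 \cosh(\gamma - \i \alpha L) + d = 0, \qquad d = \cosh(\gamma)(s_1-s_2)^2 + (s_1+s_2)^2,
\end{equation}
the prefactor $\gamma\ell/\bigl(2(e^\gamma-1)s_1 s_2\bigr)$ and the factor $\sqrt{2}\,e^{\gamma/2}$ being nonzero for $\gamma>0$. This reduces the whole ``if and only if'' statement about the eigenvalues to the analysis of a single scalar equation.

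Next I would separate this equation into real and imaginary parts. Writing $\cosh(\gamma - \i\alpha L) = \cosh(\gamma)\cos(\alpha L) - \i \sinh(\gamma)\sin(\alpha L)$ and using that $s_1, s_2, d$ are real and $\gamma\in\R^*$, the imaginary part reads $-4 s_1 s_2 \sinh(\gamma)\sin(\alpha L) = 0$; since $s_1 s_2 \sinh(\gamma)\neq 0$ this forces $\sin(\alpha L)=0$, hence $\alpha L \in \{0,\pi\}$, i.e. $\alpha \in \{0, \pm\pi/L\}$ in the Brillouin zone. Substituting $\cos(\alpha L)=1$ (the case $\alpha=0$) into the real part gives $(s_1+s_2)^2(\cosh(\gamma)+1)$, which is strictly positive and hence excludes $\alpha=0$. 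Substituting $\cos(\alpha L)=-1$ (the case $\alpha=\pm\pi/L$) collapses the real part to $\cosh(\gamma)\bigl((s_1-s_2)^2-4s_1s_2\bigr)+(s_1+s_2)^2$, which is exactly the left-hand side of \eqref{eq: critical gamma}. This establishes the claimed equivalence.

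It then remains to compute the geometric multiplicity. For a $2\times2$ matrix with a double eigenvalue, the geometric multiplicity is $2$ if and only if the matrix is a scalar multiple of the identity; so it suffices to show that $\qpcapmatg$ is not scalar at the exceptional point, for which I would show that its off-diagonal entries do not both vanish. Reading the $(1,2)$ and $(2,1)$ entries off \eqref{eq: def qp cap mat} for $N=2$ and setting $e^{\pm\i\alpha L}=-1$ at $\alpha=\pm\pi/L$, a short manipulation shows $\capmat_{12}^{\gamma,\alpha}=0$ if and only if $s_1 = s_2 e^{\gamma\ell}$, while $\capmat_{21}^{\gamma,\alpha}=0$ if and only if $s_2 = s_1 e^{\gamma\ell}$. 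These two conditions together force $e^{2\gamma\ell}=1$, contradicting $\gamma>0$; hence at least one off-diagonal entry is nonzero, the matrix is not scalar, and so the geometric multiplicity is $1$.

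The computations are all elementary once \cref{lemmaN=2} is available, so there is no serious analytic obstacle; the only point requiring genuine care is the bookkeeping in the last paragraph, where both the within-cell and the wrap-around (quasiperiodic) contributions to each off-diagonal entry must be collected correctly before the cancellation conditions can be read off. I would in particular double-check the indexing in \eqref{eq: def qp cap mat}, especially the periodic identification $s_0 = s_N = s_2$, to be sure the two mutually incompatible conditions $s_1 = s_2 e^{\gamma\ell}$ and $s_2 = s_1 e^{\gamma\ell}$ come out correctly.
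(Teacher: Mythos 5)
Your proof is correct. For the coalescence condition it is essentially the paper's argument in different clothing: the radicand $4s_1s_2\cosh(\gamma-\i\alpha L)+d$ whose vanishing you analyse is, up to the nonzero prefactor $\gamma^2\csch^2(\gamma/2)/(2s_1^2s_2^2)$, exactly the discriminant $\tr(\qpcapmatg)^2-4\det(\qpcapmatg)$ that the paper sets to zero, and both proofs then split into real and imaginary parts (you are more explicit than the paper in ruling out $\alpha=0$ via the positivity of $(s_1+s_2)^2(\cosh\gamma+1)$, which is a welcome detail). Where you genuinely diverge is the geometric multiplicity: the paper computes the matrix $\capmat^{\gamma,\pi/L}-\tfrac12\tr(\capmat^{\gamma,\pi/L})I$ explicitly, writes down its two eigenvectors $v_\pm$, and observes that they coincide at the critical $\gamma$; you instead invoke the criterion that a $2\times2$ matrix with a double eigenvalue is diagonalisable only if it is a scalar multiple of the identity, and show the off-diagonal entries cannot both vanish since $\capmat_{12}^{\gamma,\pi/L}=0$ and $\capmat_{21}^{\gamma,\pi/L}=0$ would force $s_1=e^{\gamma\ell}s_2$ and $s_2=e^{\gamma\ell}s_1$ simultaneously, i.e.\ $e^{2\gamma\ell}=1$, impossible for $\gamma>0$. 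Your route buys a cleaner and more robust argument — it avoids verifying that two unwieldy eigenvector formulas agree, and it establishes non-scalarity for \emph{all} $\gamma>0$ at $\alpha=\pm\pi/L$ rather than only at the critical value — at the cost of not exhibiting the (unique) eigenvector, which the paper's computation produces as a by-product. Your off-diagonal entries do match the paper's explicit matrix, so the bookkeeping you flagged as the delicate point checks out.
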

\begin{proof}
    The eigenvalues coalesce if and only if $\tr(\qpcapmatg)^2-4\det(\qpcapmatg)=0$, which reads
    \begin{align*}
        \frac{\gamma ^2 \csch^2\left(\frac{\gamma }{2}\right) \left(4 s_1 s_2 \cosh (\gamma -\i \alpha  L)+\cosh (\gamma )
        (s_1-s_2)^2+(s_1+s_2)^2\right)}{2 s_1^2 s_2^2} = 0.
    \end{align*}
    Comparing real and imaginary parts, we recover the sought formula.

    In order to establish the geometrical multiplicity of the eigenvalue, which reads $\tr(\capmat^{\gamma,\frac{\pi}{L}})/2$, we need to find the dimension of the kernel of $\capmat^{\gamma,\frac{\pi}{L}} - \tr(\capmat^{\gamma,\frac{\pi}{L}})/2$. After some algebraic manipulation, we obtain
    \begin{align*}
        \capmat^{\gamma,\frac{\pi}{L}} - \frac{1}{2}\tr(\capmat^{\gamma,\frac{\pi}{L}}) =
        \begin{pmatrix}
            \frac{\gamma  (s_2-s_1)}{2 s_1 s_2} &
   \frac{\gamma  \left(s_1-e^{\gamma }
   s_2\right)}{\left(e^{\gamma }-1\right) s_1
   s_2} \\
 \frac{\gamma  \left(e^{\gamma }
   s_1-s_2\right)}{\left(e^{\gamma }-1\right)
   s_1 s_2} & \frac{\gamma  (s_1-s_2)}{2
   s_1 s_2}
        \end{pmatrix},
    \end{align*}
    so that its eigenvectors read
    \begin{align*}
        v_{\pm} = \begin{pmatrix}
            \pm\sqrt{2} \sqrt{e^{\gamma } \left(\cosh (\gamma ) \left(s_1^2-6 s_1
   s_2+s_1^2\right)+(s_1+s_2)^2\right)}-e^{\gamma } s_1+\left(e^{\gamma }-1\right)
   s_2+s_1 \\
   2 e^{\gamma } s_1-2 s_2
        \end{pmatrix},
    \end{align*}
    which obviously agree for $\gamma$ as above.
\end{proof}

We denote by 
\begin{align}
\gamma_c(s_1,s_2) \label{eq: def gammac}
\end{align}
the unique critical $\gamma$ satisfying \eqref{eq: critical gamma}.

In \cref{fig: critical gamma} we consider a periodic chain of dimers having $s_1=1$ and $s_2=2$. We plot the two eigenvalues in the complex plane as $\alpha$ varies across the Brillouin zone, from $-\pi/L$ to $\pi/L$. \cref{lemma: critical gamma} tells us that, for these parameter values, the eigenvalues will coalesce when $\gamma_c = 0.73899$. The crossing of this critical $\gamma_c$ corresponds to a fundamental behavioural change in the band functions.
\begin{figure}
    \includegraphics[width=\textwidth]{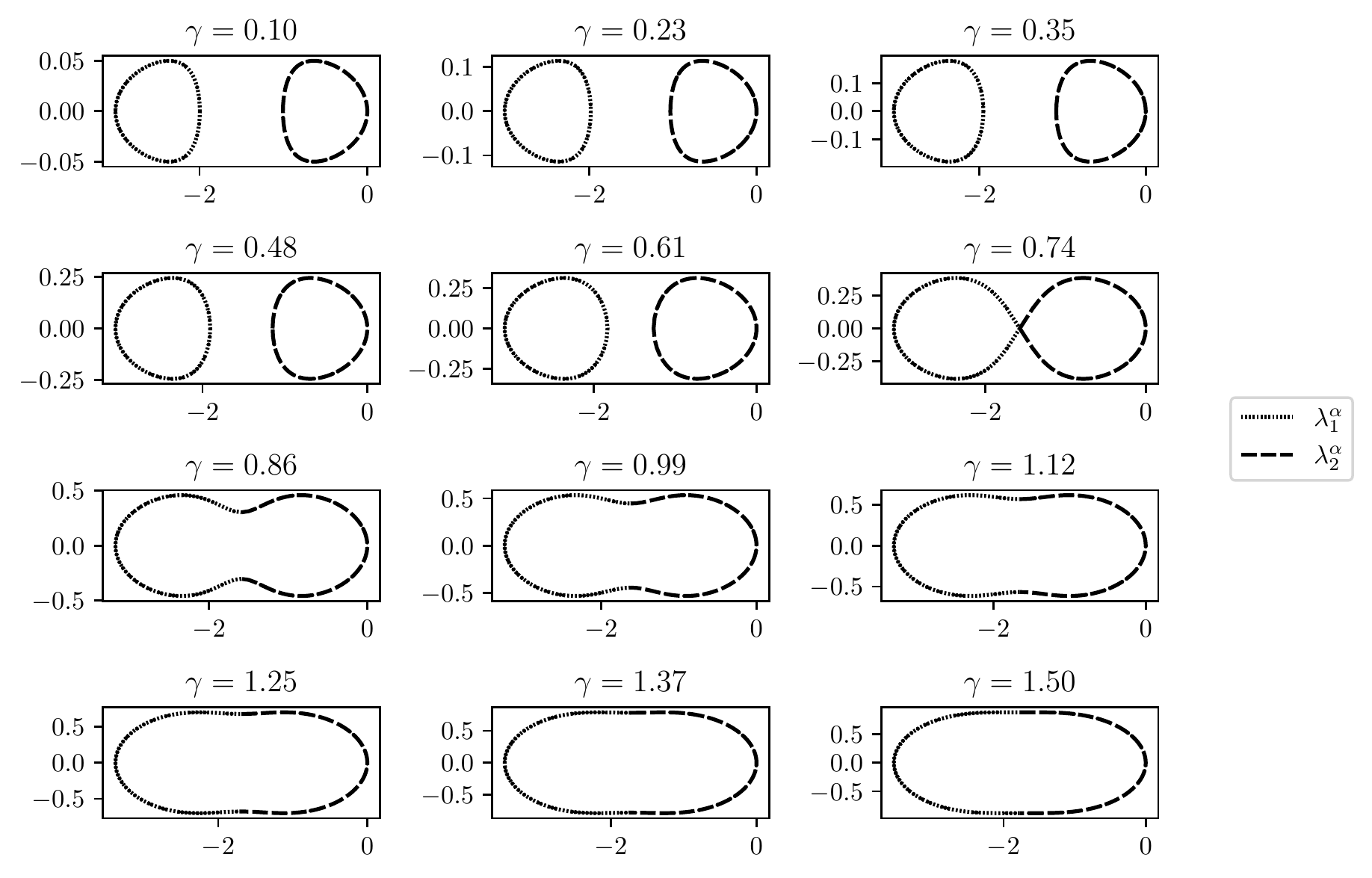}
    \caption{Eigenvalue behaviour in the complex plan as $\alpha$ varies over the Brillouin zone for different values of $\gamma$ for a system of periodically repeated dimers with $s_1=1$ and $s_2=2$. The value $\gamma_c(1,2)=0.73899$ corresponding to a system with exceptional point signs the change from open to close band structure.}
    \label{fig: critical gamma}
\end{figure}
This observation leads to the following results.
\begin{proposition}\label{prop: winding of eigenvalues}
    Consider a system of periodic dimers with spacings $s_1$ and $s_2$ and band functions $\omega_{i}^\alpha$ for $i=1,2$. There exists complex points $E_i\in\C$ with non-zero winding of $\omega_{i}^\alpha$ for $i=1,2$ if and only if $0<\gamma<\gamma_c(s_1,s_2)$.
\end{proposition}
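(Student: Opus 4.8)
The plan is to read everything off the closed form in \cref{lemmaN=2} and reduce the whole question to the geometry of a single scalar loop. For the dimer ($N=2$, $\ell_1=\ell_2=\ell$) that lemma gives the two eigenvalues of $\qpcapmatg$ as
\[
\lambda_{1,2}^\alpha = \frac{\gamma\ell}{2(e^{\gamma}-1)s_1s_2}\Bigl((e^{\gamma}+1)(s_1+s_2)\mp\sqrt{2}\,e^{\gamma/2}\sqrt{R(\alpha)}\Bigr),\qquad R(\alpha):=4s_1s_2\cosh(\gamma-\i\alpha L)+d,
\]
and by \cref{prop: asymptotic expansion of freqs via eigenvals} the bands are $\omega_i^\alpha=v_b\sqrt{\delta\lambda_i^\alpha}+\BO(\delta)$. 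Thus the entire $\alpha$-dependence of both bands is carried by $\sqrt{R(\alpha)}$, and the winding question reduces to the planar curve $\alpha\mapsto R(\alpha)$ together with its square root.

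First I would record the geometry of $R$. Writing $\cosh(\gamma-\i\alpha L)=\cosh\gamma\cos(\alpha L)-\i\sinh\gamma\sin(\alpha L)$, as $\alpha$ runs over $Y^*=(-\tfrac{\pi}{L},\tfrac{\pi}{L}]$ the point $R(\alpha)$ traces exactly once an ellipse $\mathcal{E}$ centred at $d>0$, with real semi-axis $4s_1s_2\cosh\gamma$ and imaginary semi-axis $4s_1s_2\sinh\gamma$. Its real-axis crossings are $d\pm 4s_1s_2\cosh\gamma$; since $d>0$ the right crossing is positive, so $\mathcal{E}$ encircles the origin once if and only if the left crossing is negative, i.e. if and only if $d-4s_1s_2\cosh\gamma=\cosh(\gamma)(s_1^2-6s_1s_2+s_2^2)+(s_1+s_2)^2<0$. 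This is exactly the left-hand side of the exceptional-point condition \eqref{eq: critical gamma}. Whenever $\gamma_c$ exists the coefficient $s_1^2-6s_1s_2+s_2^2$ is negative, so this expression strictly decreases in $\gamma$, equals $2(s_1-s_2)^2>0$ at $\gamma=0$, and vanishes precisely at $\gamma_c$ by \eqref{eq: def gammac}. Hence $\mathcal{E}$ excludes the origin for $0<\gamma<\gamma_c$ and encircles it for $\gamma>\gamma_c$.

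Next I would turn this enclosure dichotomy into the winding statement. For $0<\gamma<\gamma_c$ the convex curve $\mathcal{E}$ avoids $0$, so a branch cut along a ray from $0$ missing the closed elliptical disc makes $\sqrt{\cdot}$ holomorphic and injective near $\mathcal{E}$; then $\alpha\mapsto\sqrt{R(\alpha)}$ is a simple closed curve and each $\lambda_i^\alpha$, an affine image of it, is a simple closed loop. Since $\qpcapmatg$ is $2\pi/L$-periodic in $\alpha$, the band $\omega_i^\alpha$ must also close up; but $\sqrt{\lambda_i^\alpha}$ closes only if $\lambda_i^\alpha$ has even winding about $\lambda=0$, so the simple loop $\lambda_i^\alpha$ cannot enclose the origin. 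Therefore $\sqrt{\cdot}$ is a local homeomorphism along $\lambda_i^\alpha$, the band $\omega_i^\alpha$ is to leading order a simple closed curve, and by the Jordan curve theorem any interior point $E_i$ has winding $\pm1\neq0$. For $\gamma>\gamma_c$, $\mathcal{E}$ encircles $0$, so $\sqrt{R(\alpha)}$ returns to its negative after one period; thus $\lambda_1^{\pi/L}=\lambda_2^{-\pi/L}$, the two branches are interchanged, and each $\alpha\mapsto\omega_i^\alpha$ is an open arc over $Y^*$ rather than a closed loop, so no point carries a nonzero integer winding. Combining the regimes gives the equivalence.

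The main obstacle I expect is the passage through the square root. Rigorously I must (i) exclude the loops $\lambda_i^\alpha$ from passing through or enclosing $\lambda=0$ (done above using the enforced $\alpha$-periodicity of the bands), (ii) control the $\BO(\delta)$ correction so that the leading-order simple-loop/open-arc picture persists for small $\delta$, which is a homotopy-stability argument since winding is invariant under small uniform perturbations on the compact zone, and (iii) make the "no nonzero winding" claim in the merged regime precise, either through the open-arc observation or by using the $\alpha\mapsto-\alpha$ conjugation symmetry of \cref{lemma: capmat -alpha is conjugation} to fix the reflected geometry of the single composite loop over the doubled zone. I would also state explicitly that the proposition presupposes the existence of $\gamma_c(s_1,s_2)$, i.e. $s_1^2-6s_1s_2+s_2^2<0$; otherwise $\mathcal{E}$ never encircles the origin and the bands carry nonzero winding for every $\gamma>0$.
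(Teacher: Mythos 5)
Your argument is correct and reaches the same dichotomy as the paper, but by a genuinely different (more geometric) route. The paper's proof works directly at the edge of the Brillouin zone: it observes that $\tr(\capmat^{\gamma,\pi/L})^2-4\det(\capmat^{\gamma,\pi/L})$ changes sign at $\gamma_c$, so that for $\gamma<\gamma_c$ the two eigenvalues at $\alpha=\pm\pi/L$ are real and distinct while for $\gamma>\gamma_c$ they form a conjugate pair; combining this with the conjugation symmetry of \cref{lemma: capmat -alpha is conjugation} shows that each band closes up in the first case and that the two branches are exchanged (each band is an open arc) in the second, with non-triviality of the winding then asserted from periodicity. Your discriminant $R(\pi/L)=d-4s_1s_2\cosh\gamma$ is exactly the same quantity, but you read the sign change as the statement that the ellipse $\alpha\mapsto R(\alpha)$ starts or stops encircling the origin, and you replace the conjugation-symmetry step by the monodromy of $\sqrt{R(\alpha)}$ along that ellipse. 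What your version buys is (a) a cleaner justification of \emph{non-zero} winding in the regime $\gamma<\gamma_c$ --- the paper's appeal to periodicity alone does not rule out a degenerate closed curve of zero winding, whereas your injective-branch/Jordan-curve argument gives winding $\pm1$ about interior points; (b) an explicit treatment of the $\BO(\delta)$ correction by homotopy stability, which the paper suppresses; and (c) the observation that the statement tacitly presupposes $s_1^2-6s_1s_2+s_2^2<0$ so that $\gamma_c$ exists, which is a genuine and worthwhile caveat. One point in your write-up needs repair: the step ``$\lambda_i^\alpha$ cannot enclose the origin, by evenness of the winding'' is degenerate for $i=1$, because $\lambda_1^0=0$ (the constant vector is in the kernel of $\capmat^{\gamma,0}$, as one checks from \cref{lemmaN=2} using $R(0)=(1+\cosh\gamma)(s_1+s_2)^2$), so the loop $\lambda_1^\alpha$ passes \emph{through} the origin and its winding about $0$ is undefined. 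Locally $\lambda_1^\alpha\approx \i C\alpha$ with $C\in\R^*$, so the positive-real-part branch of the square root still produces a continuous closed curve (with a corner at $\omega=0$) and the conclusion survives, but this case must be treated separately rather than by the parity argument.
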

\begin{proof}
    The main observation of the proof is that for $0<\gamma<\gamma_c(s_1,s_2)$ we have, for $\sigma=(12)$ the permutation of two symbols,
    \begin{align*}
        \Re(\omega^{\frac{\pi}{L}}_i)\neq\Re(\omega^{\frac{\pi}{L}}_{\sigma(i)}),\quad \Im(\omega^{\frac{\pi}{L}}_i)=\Im(\omega^{\frac{\pi}{L}}_{\sigma(i)})
    \end{align*}
    while for $\gamma>\gamma_c(s_1,s_2)$ the opposite happens
    \begin{align*}
        \Re(\omega^{\frac{\pi}{L}}_i)=\Re(\omega^{\frac{\pi}{L}}_{\sigma(i)}), \quad \Im(\omega^{\frac{\pi}{L}}_i)\neq\Im(\omega^{\frac{\pi}{L}}_{\sigma(i)}).
    \end{align*}
    This is easily seen from the fact that $\tr( \capmat^{\gamma,\frac{\pi}{L}})^2-4\det(\capmat^{\gamma,\frac{\pi}{L}})$ changes sign at $\gamma=\gamma_c(s_1,s_2)$ as we can see from the formula in the proof of \cref{lemma: critical gamma}. Combining this with \cref{lemma: capmat -alpha is conjugation} we can conclude that $\omega_i^\alpha$ draws a closed curve on the complex plane if and only if $0<\gamma<\gamma_c(s_1,s_2)$. The non-triviality of the winding then follows by the fact that the real and the imaginary parts of $\omega_i^\alpha$ are periodic with period $\frac{2\pi}{L}$.
\end{proof}
\begin{corollary}
    The \emph{vorticity}
    \begin{align} \label{defnu} 
    \nu_\gamma \coloneqq \frac{1}{2\pi}\int_{-\frac{\pi}{L}}^{\frac{\pi}{L}} \frac{\partial}{\partial \alpha}\arg(\omega_2^\alpha-\omega_1^\alpha)\dd \alpha
    \end{align}
    of a system of periodically repeated dimers is non-zero if and only if $0<\gamma<\gamma_c(s_1,s_2)$.
\end{corollary}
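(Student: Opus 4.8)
The plan is to recognise the vorticity $\nu_\gamma$ of \eqref{defnu} as the winding number (topological degree) about the origin of the closed loop $\Gamma_\gamma\colon\alpha\mapsto\omega_2^\alpha-\omega_1^\alpha$ as $\alpha$ runs once over the Brillouin zone $[-\pi/L,\pi/L]$. Indeed, $\frac{1}{2\pi}\int\partial_\alpha\arg(\omega_2^\alpha-\omega_1^\alpha)\,\dd\alpha$ counts exactly how many times $\Gamma_\gamma$ encircles $0$, so it is an integer and, crucially, a homotopy invariant: it is well defined and locally constant in $\gamma$ as long as $\Gamma_\gamma$ never passes through the origin. First I would record that $\omega_2^\alpha-\omega_1^\alpha=0$ precisely when the two eigenvalues of $\qpcapmatg$ coalesce, and invoke \cref{lemma: critical gamma}: for $\gamma>0$ this happens only at $\gamma=\gamma_c(s_1,s_2)$ and $\alpha=\pm\pi/L$. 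Hence $\Gamma_\gamma$ avoids $0$ for every $\gamma\in(0,\gamma_c)\cup(\gamma_c,\infty)$, and $\nu_\gamma$ is a well-defined integer, constant on each of the two intervals, with its only admissible jump at $\gamma_c$.

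It then remains to evaluate the constant on each side. The structural input is the symmetry $\capmat^{\gamma,-\alpha}=\overline{\capmat^{\gamma,\alpha}}$ of \cref{lemma: capmat -alpha is conjugation}, which forces the set $\{\omega_1^{-\alpha},\omega_2^{-\alpha}\}$ to be the complex conjugate of $\{\omega_1^{\alpha},\omega_2^{\alpha}\}$; consequently $\Gamma_\gamma$ is symmetric about the real axis and meets it only at $\alpha\in\{0,\pm\pi/L\}$. Using the explicit eigenvalues of \cref{lemmaN=2}, the gap $\omega_2^\alpha-\omega_1^\alpha$ is governed by the discriminant $\tr(\qpcapmatg)^2-4\det(\qpcapmatg)$, whose only zero over the zone sits at $\alpha=\pm\pi/L$ and whose sign there switches exactly at $\gamma_c$ — this is the computation already performed in the proofs of \cref{lemma: critical gamma} and \cref{prop: winding of eigenvalues}. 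For $\gamma>\gamma_c$ the discriminant is negative at the zone edge, the two bands meet in real part but split in imaginary part, and the endpoints of $\Gamma_\gamma$ sit symmetrically about $0$ without net encirclement, giving $\nu_\gamma=0$; for $0<\gamma<\gamma_c$ the opposite configuration holds, and combined with the nontrivial winding of the individual bands from \cref{prop: winding of eigenvalues} this forces $\Gamma_\gamma$ to wind around the origin, so $\nu_\gamma\neq0$.

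The main obstacle I anticipate is exactly this last conversion of the qualitative ``closed versus open band structure'' dichotomy of \cref{prop: winding of eigenvalues} into the precise statement that $\Gamma_\gamma$ does or does not encircle $0$. Two points require care. First, $\omega_i^\alpha=v_b\sqrt{\delta\lambda_i^\alpha}$ carries a square root, so one must track the branch of $\sqrt{\,\cdot\,}$ as $\lambda_i^\alpha$ moves through the complex plane — in particular $\lambda_1^0=0$, so one band passes through the origin — and verify that this neither creates nor destroys windings of the difference. Second, one must confirm that it is the winding of $\omega_2^\alpha-\omega_1^\alpha$ about $0$, rather than the winding of each band about its own centre $E_i$, whose parity flips at $\gamma_c$; I would establish this by reducing, via \cref{lemmaN=2}, to the winding of the scalar $\sqrt{4s_1s_2\cosh(\gamma-\i\alpha L)+d}$, and reading that winding off from whether the ellipse $\alpha\mapsto 4s_1s_2\cosh(\gamma-\i\alpha L)$ traced over the zone encloses the relevant point — a condition controlled precisely by \eqref{eq: critical gamma}. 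Once the value is pinned on one interval, homotopy invariance together with the single jump permitted at $\gamma_c$ yields the full dichotomy asserted in the corollary.
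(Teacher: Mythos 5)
The paper gives no separate proof of this corollary: it is presented as an immediate consequence of \cref{prop: winding of eigenvalues}, whose proof rests on the sign change of the discriminant at $\gamma_c$ and the conjugation symmetry of \cref{lemma: capmat -alpha is conjugation}. Your framing of $\nu_\gamma$ as a winding number that is locally constant in $\gamma$ away from eigenvalue coalescences (located by \cref{lemma: critical gamma}) is a reasonable sharpening of that outline, but the decisive step --- actually evaluating the winding on each side of $\gamma_c$ --- is where the proposal fails, and the facts you assemble point in the opposite direction to the claim. A first, smaller issue is integrality: for $\gamma>\gamma_c$ the two eigenvalues meet the zone edge as a complex-conjugate pair and, tracked continuously, swap as $\alpha$ crosses $\pm\pi/L$ (this is the ``C-shaped curve that connects to the other eigenvalue''); then $\omega_2^{\pi/L}-\omega_1^{\pi/L}=-\bigl(\omega_2^{-\pi/L}-\omega_1^{-\pi/L}\bigr)$, the curve $\Gamma_\gamma$ is anti-periodic rather than closed, and \eqref{defnu} is a half-integer. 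Your homotopy argument survives in half-integer form, but the assertion that $\nu_\gamma$ is an integer does not, and this already changes which side of $\gamma_c$ can possibly give $\nu_\gamma=0$.

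The more serious problem is the evaluation itself. By \cref{lemmaN=2}, $\lambda_2^\alpha-\lambda_1^\alpha=c\,\sqrt{w(\alpha)}$ with $c>0$ and $w(\alpha)=4s_1s_2\cosh(\gamma-\i\alpha L)+d$, an ellipse centred at $d$ with semi-axes $4s_1s_2\cosh\gamma$ and $4s_1s_2\sinh\gamma$; comparison with \eqref{eq: critical gamma} shows this ellipse omits the origin exactly when $0<\gamma<\gamma_c$ and encloses it when $\gamma>\gamma_c$. Hence the winding of $\lambda_2^\alpha-\lambda_1^\alpha$ about $0$ is $0$ for $\gamma<\gamma_c$ and $\pm\tfrac12$ for $\gamma>\gamma_c$. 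Moreover $\omega_2^\alpha-\omega_1^\alpha=v_b\sqrt{\delta}\,\bigl(\lambda_2^\alpha-\lambda_1^\alpha\bigr)\big/\bigl(\sqrt{\lambda_2^\alpha}+\sqrt{\lambda_1^\alpha}\bigr)$, and with the positive-real-part selection of \cref{prop: asymptotic expansion of freqs via eigenvals} the denominator stays in the closed right half-plane and never vanishes (since $\lambda_1^\alpha+\lambda_2^\alpha=\tr\qpcapmatg>0$), so it contributes no winding: your proposed reduction to the winding of $\sqrt{4s_1s_2\cosh(\gamma-\i\alpha L)+d}$ is therefore the correct computation of the winding of $\Gamma_\gamma$, but it yields the \emph{reverse} of the asserted dichotomy. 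This is corroborated by your own symmetry analysis: for $0<\gamma<\gamma_c$ the eigenvalues at the guaranteed real-axis crossings $\alpha=0$ and $\alpha=\pm\pi/L$ are real with $\lambda_2^\alpha\geq\lambda_1^\alpha\geq0$, so $\omega_2^\alpha-\omega_1^\alpha$ is real and \emph{positive} at all of them, and a curve symmetric about the real axis whose only real values are positive cannot encircle the origin. To close the ``if'' direction you would need to exhibit additional crossings of the negative real axis, which you have not done and which your own reduction excludes; as written, the final step of the proposal establishes the opposite implication, so the argument has a genuine gap that cannot be repaired without first fixing precisely which determination of $\omega_i^\alpha$ and of $\arg$ the definition \eqref{defnu} is meant to use.
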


\begin{figure}
    \centering
    \begin{subfigure}[t]{0.45\textwidth}
        \centering
        \includegraphics[width=\textwidth]{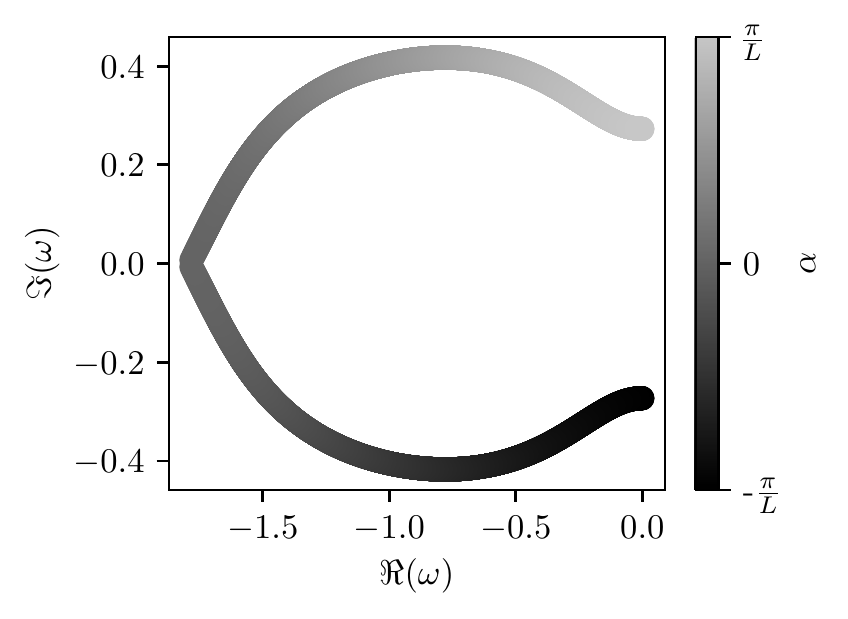}
        \caption{Band function without winding ($\gamma_c<\gamma=0.9$).}
        \label{fig: open band}
    \end{subfigure} \hfill
    \begin{subfigure}[t]{0.45\textwidth}
        \centering
        \includegraphics[width=\textwidth]{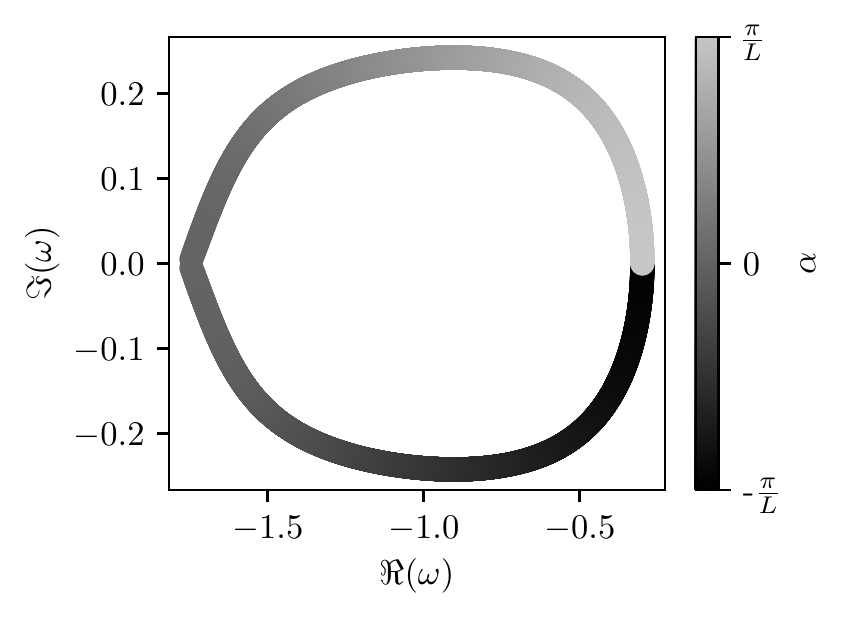}
        \caption{Band function with winding and non-zero vorticity ($\gamma=0.5<\gamma_c$).}
        \label{fig: non zero vorticity band}
    \end{subfigure}\\
    \begin{subfigure}[t]{0.45\textwidth}
        \centering
        \includegraphics[width=\textwidth]{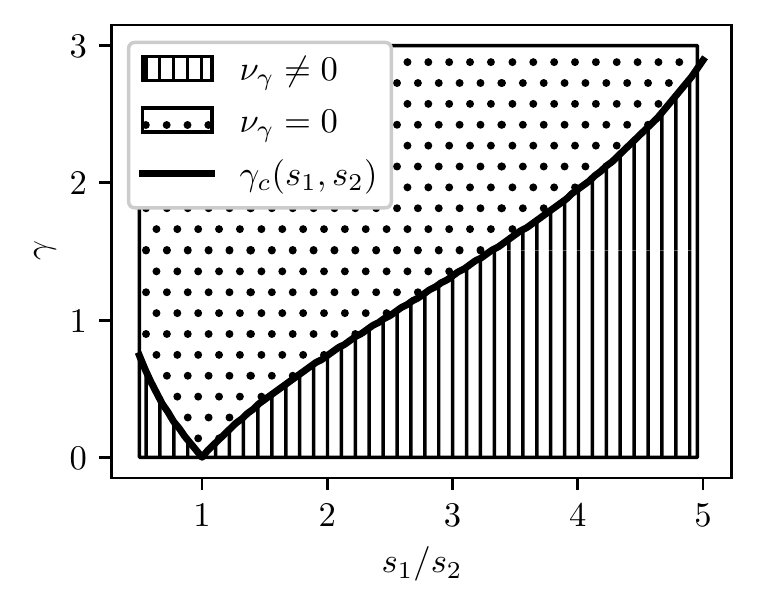}
        \caption{Relation between the dimer spacing $s_1$, $s_2$, $\gamma_c$ and vorticities.}
        \label{fig: gammac, nu and si}
    \end{subfigure}
    \caption{Band function winding and its relation to $\gamma_c$. The crossing of $\gamma_c$ induces a change in the winding of the band functions and thus of the vorticity $\nu_\gamma$. Computed for a system of $N=2$ periodically repeated resonators with spacing $s_1=1$ and $s_2=2$.}
    \label{fig: gammac and vorticity}
\end{figure}

In \cref{fig: gammac and vorticity}, we plot just one of the eigenvalues in the complex plane, to fully illuminate the behaviour we observed in \cref{fig: critical gamma}. We can clearly see the fundamental change that occurs at the transition when $\gamma$ crosses the value $\gamma_c(s_1,s_2)$. Whereas when $\gamma<\gamma_c$ the eigenvalue forms a single closed loop, when $\gamma>\gamma_c$ this loop is broken open to form a C-shaped curve that connects to the other eigenvalue to form a single loop. At the transition between these two states, when $\gamma=\gamma_c$, the two eigenvalues form two touching closed loops. In \cref{fig: gammac and vorticity}(C), we show how the critical value $\gamma_c$ varies as a function of the ratio of the spacings between the two resonators. Of course, when $s_1=s_2$ the system is just a single repeating resonator so this critical value vanishes. Otherwise, it is strictly positive. 

\subsection{Complex band theory and generalised Brillouin zone}\label{sec: complex bands}

We saw in \cref{sec: skin effect} that the eigenmodes of a finite system of subwavelength resonators with imaginary gauge potential have exponentially decaying amplitudes. Further, we saw that the system displays signs of an ``infinite'' order exceptional point in the limit as its size becomes large. Our final results of this paper concern the precise nature of the limiting spectrum, as the size becomes arbitrarily large. 

A crucial first question is how to understand the spectrum of the limiting operator. For periodic (Hermitian) systems, the natural tool to apply is the Floquet-Bloch transform. However, as shown in \cref{sec: periodic case1}, this does not lead to the correct limit of the set of the eigenvalues associated with the finite structure as its size goes to infinity. This is because the eigenmodes of the infinite periodic system are not Bloch modes, in the traditional sense. Instead, we will consider the Floquet-Bloch transformation with complex quasi-periodicity, so that we are able to account for the potential growth or decay of the amplitude of the eigenvectors \cite{borisov.fedotov2022Bloch}. 

Recently, the truncated Floquet-Bloch transformation has been applied to recover information about the band structure from finite approximations in Hermitian systems \cite{ammari.davies.ea2023Spectrala, ammari.davies.ea2023Convergence}. Here, since we are considering the quasi-periodicity to be complex, we apply a slightly different method. We consider a system of $N$ resonators. For a resonance $\omega_j$, we compute the corresponding quasi-periodicity by

\begin{align}
\alpha_j \coloneqq \argmin_{\alpha\in \C} \vert \omega_j - \upomega^\alpha \vert, \label{eq: approx alpha}
\end{align}
where $\upomega^\alpha$ is the  subwavelength eigenfrequency of a system with one resonator repeated periodically. In \cref{fig: complex band structre}, we display this procedure for an array of $N=60$ resonators. In \cref{fig: minimising region}, we show the minimising region of \eqref{eq: approx alpha} and see clear local minima of the function $\vert \omega_j - \upomega^\alpha \vert$ for values of $\alpha$ that are symmetric about the imaginary axis. In \cref{fig: complex alphas} we show the resulting complex quasi-frequencies $\alpha_j$, plotted against their indices, and in \cref{fig: complex band functions}, we plot the complex band functions $\alpha_j\mapsto \omega_j$. In \cref{sec: approx band}, we present the analogous numerical results for a system of periodically repeated dimers both in the Hermitian case and non-Hermitian case with complex material parameters.

\begin{figure}[h!]
    \centering
    \begin{subfigure}[t]{0.48\textwidth}
        \centering
        \includegraphics[width=\textwidth]{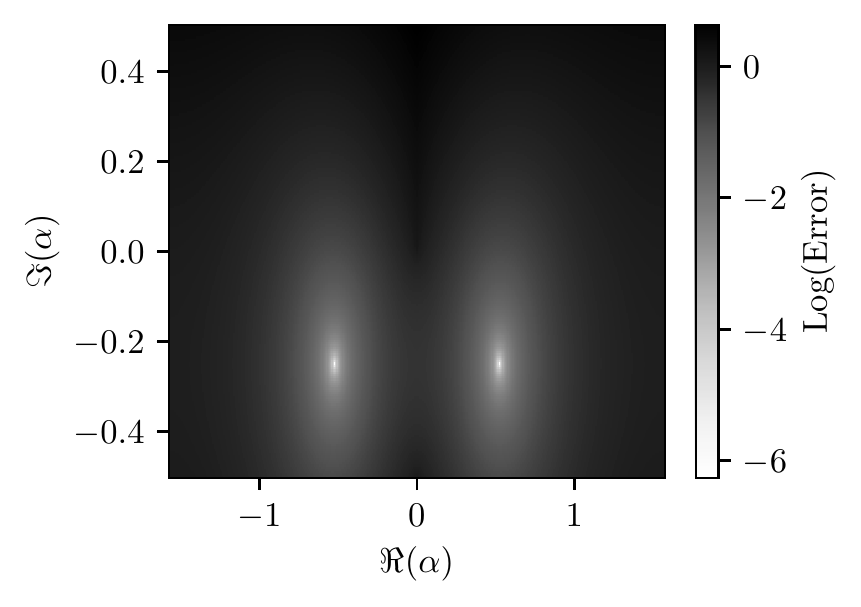}
        \caption{Minimising region for $\vert \omega_j - \upomega^\alpha \vert$ for $j=20$.}
        \label{fig: minimising region}
    \end{subfigure}
    \hfill
    \begin{subfigure}[t]{0.48\textwidth}
        \centering
        \includegraphics[width=\textwidth]{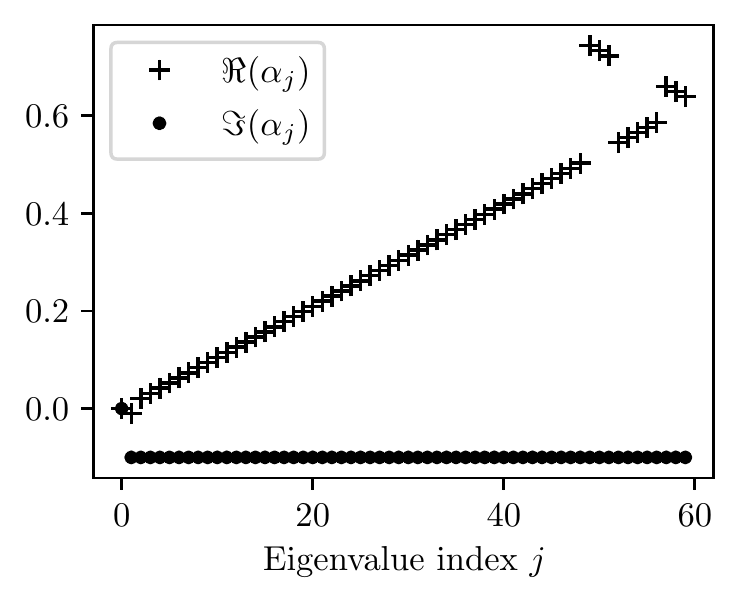}
        \caption{Computed $\alpha_j$ with non-trivial imaginary part.}
        \label{fig: complex alphas}
    \end{subfigure}\\
\begin{subfigure}[t]{0.48\textwidth}
        \centering
        \includegraphics[width=\textwidth]{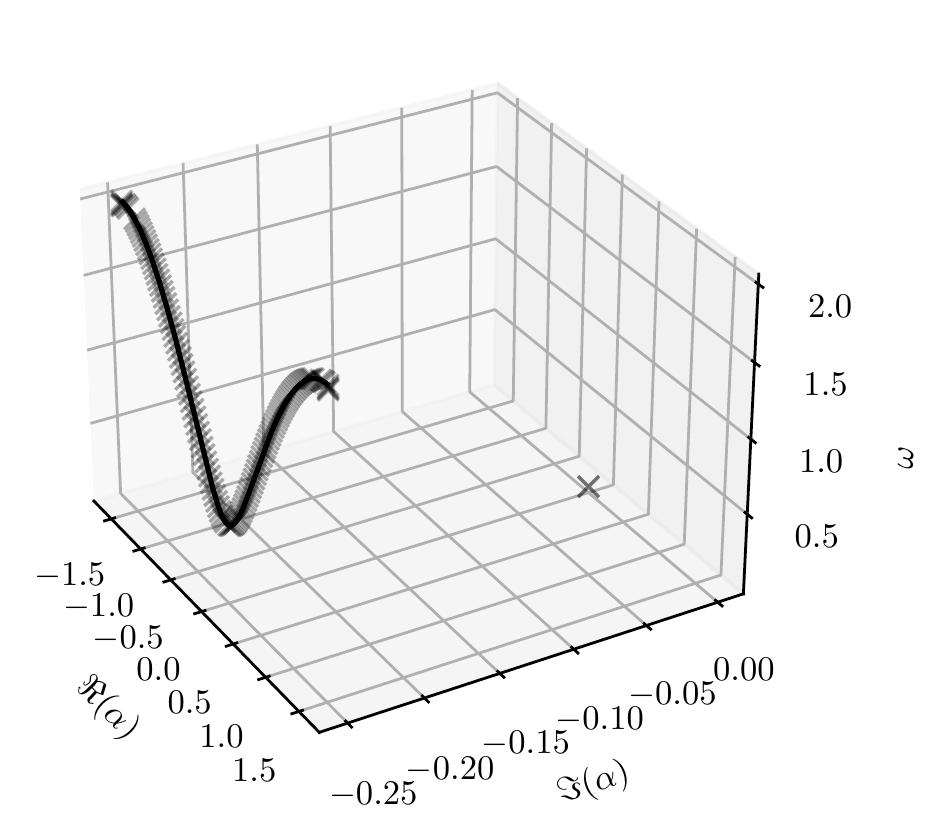}
        \caption{Continuous and discrete complex band functions in $\C\times\R$. The solid line shows the continuous complex band structure from \eqref{cvthm} item (i) while the gray crosses show the
pairs $(\alpha_j, \omega_j)$ from \eqref{eq: approx alpha} from the finite structure.}
        \label{fig: complex band functions}
\end{subfigure}
    \caption{Complex band structure of a system of $N=60$ resonators with $\ell=s=1$ and $\gamma=1$. The non-trivial imaginary part encodes the exponential decay of the eigenmodes presented in \cref{sec: skin effect}.}
    \label{fig: complex band structre}
\end{figure}

The above numerical results suggest to introduce the following set, called the \emph{generalised Brillouin zone}, 
\begin{equation} \label{gbz}
\mathcal{Y}^*:= \big\{ (\alpha, \beta(\alpha)) \in Y^* \times \R: \lambda_i^{\alpha + \i \beta(\alpha)}  \in \R^+ \mbox{ for }  i=1 \mbox{ or } 2 \big\}. 
\end{equation}
Here, $\lambda_i^{\alpha +\i \beta(\alpha)}$ for $i=1,2$, are defined in Lemma \ref{lemmaN=2}. The value of this set is immediately clear from the following convergence result. 
\begin{theorem} \label{cvthm}
\begin{enumerate}[(i)]
        \item Consider a chain of $N$ resonators with $s_i=s$ and $\ell_i=\ell$ for all $1\leq i\leq N$. Let the subwavelength eigenfrequency $\lambda_k$ be defined by (\ref{eq: eigenvalues capmat}). Then, there exists $(\alpha, \beta(\alpha)) \in \mathcal{Y}^*$ such that $|\lambda_k - \lambda^{\alpha+ \i \beta(\alpha)}| \rightarrow 0$ as  $N\rightarrow +\infty$;
\item Consider  a system of dimers, i.e. with $\ell_i=\ell=1$. Then, the same result as in (i) holds as the number of dimers goes to infinity.  
\end{enumerate}
\end{theorem}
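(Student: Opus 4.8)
The plan is to exploit that, for the equal-spacing chain, both the finite eigenvalues and the band function are available in closed form, and to show that the locus in $\mathcal{Y}^*$ on which the analytically continued band function is real is a single horizontal line $\Im\,\alpha=\mathrm{const}$ in the complex $\alpha$-plane whose image recovers the finite spectrum. For (i), I would first rewrite the eigenvalues of \eqref{eq: eigenvalues capmat} as
\begin{align*}
\lambda_k = \frac{\gamma}{s}\coth(\gamma\ell/2) + \frac{|\gamma|}{s\,|\sinh(\gamma\ell/2)|}\cos\!\Big(\frac{\pi k}{N}\Big),\qquad 2\le k\le N,
\end{align*}
so that as $N\to\infty$ the $\lambda_k$ become dense in $[\tfrac{\gamma}{s}\tanh(\gamma\ell/4),\tfrac{\gamma}{s}\coth(\gamma\ell/4)]$. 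I then take the scalar band function $\lambda^\alpha$ from \eqref{4.2b} and continue it to $\alpha\mapsto\alpha+\i\beta$. Writing $\lambda^\alpha=a_0+a_1e^{\i\alpha L}+a_{-1}e^{-\i\alpha L}$, the requirement $\lambda^{\alpha+\i\beta}\in\R$ forces either $\sin(\alpha L)=0$ or the modulus balance $e^{-\beta L}=\sqrt{|a_{-1}/a_1|}=e^{\gamma\ell/2}$, i.e.\ the constant branch $\beta(\alpha)=-\gamma\ell/(2L)$. On this branch one computes
\begin{align*}
\lambda^{\alpha-\i\gamma\ell/(2L)} = \frac{\gamma}{s}\coth(\gamma\ell/2) - \frac{\gamma}{s\sinh(\gamma\ell/2)}\cos(\alpha L),
\end{align*}
which is real and positive, so $(\alpha,-\gamma\ell/(2L))\in\mathcal{Y}^*$, and whose range over $\alpha\in Y^*$ is exactly the interval above. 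Choosing $\alpha_k\in Y^*$ with $\cos(\alpha_k L)=-\cos(\pi k/N)$ gives $\lambda^{\alpha_k-\i\gamma\ell/(2L)}=\lambda_k$ identically; hence the distance is $0$ (the trivial $\lambda_1=0$, not positive, is excluded). This settles (i), in fact with an exact match rather than a mere limit.

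For (ii) there is no elementary formula for the finite dimer eigenvalues, so I would argue through the block-Toeplitz structure. The finite dimer gauge capacitance matrix is an almost block-Toeplitz (period-$2$) tridiagonal matrix whose symbol is the quasiperiodic matrix \eqref{eq: def qp cap mat}, with eigenvalues $\lambda_{1,2}^\alpha$ from \cref{lemmaN=2}. Continuing $\alpha\mapsto\alpha+\i\beta$ and demanding $\lambda_{1,2}^{\alpha+\i\beta}\in\R$ forces the radicand $4s_1s_2\cosh(\gamma+\beta L-\i\alpha L)+d$ to be real, i.e.\ $\sinh(\gamma+\beta L)\sin(\alpha L)=0$; the nontrivial branch is $\beta=-\gamma/L$, on which $\cosh(\gamma+\beta L-\i\alpha L)=\cos(\alpha L)$ and the radicand equals $4s_1s_2\cos(\alpha L)+d\ge (s_1-s_2)^2(\cosh\gamma+1)\ge 0$. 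Thus both $\lambda_{1,2}^{\alpha-\i\gamma/L}$ are real and, after a positivity check, lie in $\mathcal{Y}^*$, tracing two real bands over $\alpha\in Y^*$.

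To connect these bands to the finite spectrum I would use the imaginary-gauge similarity: the diagonal conjugation $\mathcal{D}=\diag(e^{-\gamma\ell j/2})$ turns $\capmatg$ (and its periodic/block analogue) into a Hermitian almost-(block-)Toeplitz matrix with identical spectrum, whose symbol evaluated at \emph{real} $\alpha$ coincides with the original symbol evaluated at the gauge-shifted momentum $\alpha+\i\beta$, $\beta=-\gamma\ell/(2L)$ (resp.\ $-\gamma/L$). Convergence of the finite spectrum to $\{\lambda_{1,2}^{\alpha-\i\gamma/L}:\alpha\in Y^*\}$ then follows from the convergence of finite-section spectra of Hermitian banded (block-)Toeplitz matrices to the range of the symbol's eigenvalue curves over $Y^*$, via the Toeplitz results of \cref{sec: teoplitz theory} (equivalently, the Schmidt--Spitzer description of the limiting set, in which the "equal modulus of the roots" condition is precisely the reality condition defining $\mathcal{Y}^*$). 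The main obstacle is this last step in the dimer case: making the convergence rigorous while controlling the almost-Toeplitz corner perturbations and the $2\times2$ block structure, and verifying that the selected branch genuinely lies in $\mathcal{Y}^*$ (reality together with positivity). I expect that to be the crux, whereas part (i) is essentially explicit.
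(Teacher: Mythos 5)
Your route is genuinely different from the paper's. The paper's own proof is soft: it observes that the (real, bounded) finite spectra have convergent subsequences after relabelling, and then invokes Picard's little theorem to argue that the entire function $\alpha\mapsto\lambda^\alpha$ (respectively, the radicand in \cref{lemmaN=2} for dimers) omits at most one value, so every real limit point is attained somewhere in the complex $\alpha$-plane and hence, by definition of $\mathcal{Y}^*$ and periodicity in $\Re\alpha$, on the generalised Brillouin zone. You instead compute the reality locus explicitly. For part (i) this buys a strictly stronger conclusion: the locus is a single horizontal line $\beta=\pm\gamma\ell/(2L)$ (the sign is a Floquet-convention matter, and should be checked against the left-edge localisation for $\gamma>0$), on which $\lambda^{\alpha+\i\beta}=\tfrac{\gamma}{s}\bigl(\coth(\gamma\ell/2)\mp\csch(\gamma\ell/2)\cos(\alpha L)\bigr)$ sweeps exactly the interval $[\tfrac{\gamma}{s}\tanh(\gamma\ell/4),\tfrac{\gamma}{s}\coth(\gamma\ell/4)]$ in which the $\lambda_k$ of \eqref{eq: eigenvalues capmat} accumulate, and the choice $\cos(\alpha_k L)=\mp\cos(\pi k/N)$ matches each $\lambda_k$ exactly rather than asymptotically. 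This part is complete, correct, and more informative than the paper's argument (which never identifies $\beta(\alpha)$); your exclusion of $\lambda_1=0$ is consistent with the paper's ``up to possibly one point''.

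For part (ii) there is a genuine gap, which you have in fact flagged yourself. Identifying the branch $\beta=-\gamma/L$, on which the radicand becomes $4s_1s_2\cos(\alpha L)+d\ge(1+\cosh\gamma)(s_1-s_2)^2\ge0$, is correct and again sharper than the paper. But the theorem requires relating the \emph{finite} dimer spectrum to these bands, and your proposed bridge --- symmetrise by a diagonal gauge similarity and invoke finite-section/Schmidt--Spitzer convergence for corner-perturbed Hermitian block-Toeplitz matrices --- is exactly the step that is not available off the shelf: the paper states in its concluding remarks that no analogue of \cref{lemma: eigenpairs tilde T n} is known for block-Toeplitz matrices, and \cref{lemma: specturm of toeplitz operator,lemma: convergence of speudospectrum topelitz op,lemma: facts on eigenvectors teopliz} are formulated for scalar symbols; one would also need to exclude outlier eigenvalues created by the corner perturbation (the analogue of $\lambda_1=0$ in case (i)). The paper circumvents all of this by never computing the finite dimer spectrum: it extracts real limit points and shows, via Picard applied to the argument of the square root together with the sign compensation between $\lambda_1^\alpha$ and $\lambda_2^\alpha$, that the band functions attain the admissible real values. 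If you wish to keep your constructive strategy, the economical repair is to use your gauge similarity only to guarantee reality and boundedness of the finite spectrum, and then show that every limit point lies in the closed set swept by $\lambda_{1,2}^{\alpha-\i\gamma/L}$ over $Y^*$, rather than proving a full finite-section convergence theorem for the block case.
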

\begin{proof}
    One first notices that because of the boundedness of the spectrum and after a possible renaming, $\lambda_k$ have to converge in $\R$ as $N\to+\infty$. The key idea of the proof is to use the
explicit formulations from \cref{lemmaN=2} and Picard's little theorem. Indeed, this is enough for item (i) as $\C\ni\alpha\mapsto \lambda^\alpha\in \C$ is entire and so in particular
$\mathcal{Y}^*\ni\alpha\mapsto \lambda^\alpha$ is surjective to $\R$ up to possibly one point. For item (ii) the application of Picard's little theorem to the argument of the square root shows that
$\C\ni\alpha\mapsto \lambda_i^\alpha\in \C$ for $i=1,2$ fails to be surjective because of the branch cut, but the opposite signs in $\lambda_1^\alpha$ and $\lambda_2^\alpha$ compensate, concluding the
proof.
\end{proof}

In \cref{fig: complex band functions}, we plot both the discrete pairs $(\alpha_j,\omega_j)$ and the continuous curve, computed over $\mathcal{Y}^*$. We see perfect agreement between the discrete and continuous data, to the extent that it is difficult to distinguish the continuous curve from the discrete crosses. This agreement is remarkable given that the finite structure here is still relatively small, having $N=60$ resonators. Further numerical results, including the extension to the case of resonator dimers, can be found in \cref{sec: approx band}.


\section{Concluding remarks}
We have derived from first principles the mathematical theory of the non-Hermitian skin effect arising in subwavelength physics in one dimension. Through a gauge capacitance matrix formulation, we obtained explicit asymptotic expressions for the subwavelength eigenfrequencies and eigenmodes of the structure. This allowed us to characterise the system's fundamental behaviours and reveal the mechanisms behind them. In particular, the exponential decay of eigenmodes (the skin effect) for a system of finitely many resonators was shown to be induced by the Fredholm index of an associated Toeplitz operator. We also showed how the system behaves as its size becomes large and developed appropriate tools to understand this spectral convergence.

A natural question to ask is how the non-Hermitian system considered here compares to other non-Hermitian systems. For example, while the non-Hermiticity here arises due to the imaginary gauge potential, it is possible to break Hermiticity by simply making the material parameters complex valued. This has been studied previously in the subwavelength setting \cite{ammari.davies.ea2022Exceptional, ammari.barandun.ea2023Edge}. It has been shown that, under the assumption of PT-symmetry, these non-Hermitian systems support exceptional points \cite{ammari.davies.ea2022Exceptional} and that the analysis of infinite periodic structures can be restricted to the standard (real) Brillouin zone  \cite{ammari.barandun.ea2023Edge}. As a result, the spectrum of the finite structure converges to the band structure of the infinite one. In \cref{sec: PT}, we show that this is due to the fact that, away from exceptional points, these non-Hermitian systems with complex material parameters can be mapped to a Hermitian system under an appropriate transformation. That is, they are equivalent to a Hermitian one away from exceptional points. Conversely, the system considered in this work, which is non-Hermitian due to the introduction of an imaginary gauge potential, is fundamentally distinct from the Hermitian system and a generalised (or complex) Brillouin zone must be considered to understand the limiting spectrum. 

The explicit theory we have developed is only possible because of the simple structure of the gauge capacitance matrix and the rich literature on (tridiagonal) Toeplitz matrices and perturbations thereof. The theory of systems with periodically repeated cells of $K$ resonators remains incomplete as no similar result to \cref{lemma: eigenpairs tilde T n} is known for block-Toeplitz matrices. However, we have shown numerically that the phenomena generalise to systems of many resonators (in Appendices~\ref{sec: dimers} and~\ref{sec: approx band}). The other important generalisation of this work is to higher-dimensional systems, in which it is well known that the skin effect can be realised \cite{skinadd6}.
This will be the subject of a forthcoming publication \cite{3DSkin}.

\addtocontents{toc}{\protect\setcounter{tocdepth}{0}}
\section*{Acknowledgments}
The work of JC was supported by Swiss National Science Foundation grant number 200021–200307. The work of BD was supported by a fellowship funded by the Engineering and Physical Sciences
Research Council under grant number EP/X027422/1. 
\bigskip

\section*{Code availability}
The data that support the findings of this work are openly available at \\ \href{https://doi.org/10.5281/zenodo.8081076}{https://doi.org/10.5281/zenodo.8081076}.

\appendix
\addtocontents{toc}{\protect\setcounter{tocdepth}{1}}
\section{Toeplitz theory}\label{sec: teoplitz theory}
Item (iii) of \cref{cor: properties of cap mat} is a very powerful result as it allows us to apply the rich theory of Toeplitz matrices. In this section, we will restate some useful results adapted to our needs.

\begin{definition}\label{def: Teoplitz stuff}
    An $N\times N$ \emph{Toeplitz matrix} $T$ is a matrix whose entries are constant on the diagonals, in the sense that there exits $a_{1-N},\dots a_0,\dots,a_{N-1}\in \C$ so that 
    \begin{align}
    \label{eq: definition of toeplitz matrix}
    T_{i,j} = a_{i-j},
    \end{align}
    for $i,i=1,\dots,N$. A semi-infinite matrix $(T_{i,j})_{i,j\in\N}$ respecting \eqref{eq: definition of toeplitz matrix} for a sequence $(a_i)_{i\in\Z}\subset \C$ is called a \emph{Toeplitz operator}.
    The \emph{symbol} of a Toeplitz operator is the function
    \begin{align*}
    f_T:S^1&\to \C\\
    z&\mapsto \sum_{k\in\Z}a_k z^k,
    \end{align*}
    where $S^1$ is the unit circle.
    \end{definition}

    For an operator $A$ we denote by
    \begin{align*}
        \sp(A)\coloneqq\left\{ \lambda\in\C:A-\lambda \text{ is not invertible}\right\}
    \end{align*}
    the spectrum of $A$. For some $\epsilon>0$, we denote by $\sp_\epsilon(A)$ the pseudo-spectrum of $A$ defined equivalently \cite[Section 4]{trefethen.embree2005Spectra} by
    \begin{align*}
    \lambda\in\sp_\epsilon(A) &\Leftrightarrow \Vert (A -\lambda)\inv \Vert>\epsilon\inv\\
    &\Leftrightarrow \lambda\in \sp(A+E)\text{ for some } E \text{ such that } \Vert E \Vert < \epsilon\\
    &\Leftrightarrow \lambda\in \sp(A) \text{ or } \Vert (A-\lambda)u\Vert < \epsilon \text{ for some } u \text{ such that } \Vert u \Vert = 1.
    \end{align*}
    
    The spectrum of a Toeplitz operator is well understood, as the next lemma shows \cite[Theorem 1.10]{bottcher.silbermann1999Introduction}.
    \begin{lemma}\label{lemma: specturm of toeplitz operator}
    Let $T$ be a Toeplitz operator with symbol $f_T\in L^\infty$. Then 
    \begin{align*}
    \sp(T)=\underbrace{\left\{\lambda\in\C:T-\lambda \text{ is not Fredholm}\right\}}_{\eqqcolon\sp_{\mathrm{ess}}(T)} \cup \left\{\lambda\in\C:\Ind(T-\lambda)\neq 0\right\},
    \end{align*}
    where $\Ind(A)$ is the Fredholm index of $A$.
    In particular, for $f_T$ continuous we have
    \begin{align*}
        \sp(T)= f_T(S^1)\cup \left\{\lambda\in\C:w(f_T,\lambda)\neq 0\right\},
    \end{align*}
    where  $w(f,\lambda)$ is the winding number of $f$ around $\lambda$.
    \end{lemma}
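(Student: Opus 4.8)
The plan is to reduce everything to two classical facts about Toeplitz operators: Coburn's lemma and the Fredholm index theorem for continuous symbols. Throughout I would exploit that $T - \lambda = T(f_T - \lambda)$, i.e. subtracting a scalar from a Toeplitz operator again yields a Toeplitz operator, now with symbol $f_T - \lambda$. This is precisely what makes the spectral problem tractable, since it lets me study invertibility of $T-\lambda$ through the symbol $f_T-\lambda$ uniformly in $\lambda$.

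First I would establish the general $L^\infty$ decomposition. One inclusion is immediate: if $\lambda \notin \sp(T)$ then $T - \lambda$ is invertible, hence Fredholm of index $0$, so $\lambda$ lies outside both sets on the right-hand side. For the converse I would invoke \emph{Coburn's lemma}: for a symbol $g \in L^\infty$ that is nonzero almost everywhere, at least one of $\ker T(g)$ and $\ker T(g)^*$ is trivial. Applying this with $g = f_T - \lambda$ (the degenerate case $f_T \equiv \lambda$ being trivial), suppose $T - \lambda$ is Fredholm with index $0$. Then $\ker(T-\lambda)$ and $\ker(T-\lambda)^*$ have equal finite dimension, and since one of them vanishes, both do; thus $T - \lambda$ is invertible. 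Hence $\lambda \in \sp(T)$ precisely when $T - \lambda$ fails to be Fredholm or is Fredholm with nonzero index, which is the first displayed identity.

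For the continuous case I would specialise using the index theorem of Gohberg: for $f$ continuous on $S^1$, the operator $T(f)$ is Fredholm if and only if $f$ has no zero on $S^1$, in which case $\Ind T(f) = -w(f, 0)$. Applied to $f_T - \lambda$, Fredholmness fails exactly when $\lambda \in f_T(S^1)$, so $\sp_{\mathrm{ess}}(T) = f_T(S^1)$; and on the complement the index equals $-w(f_T - \lambda, 0) = -w(f_T, \lambda)$. Substituting into the first part yields $\sp(T) = f_T(S^1) \cup \{ \lambda : w(f_T, \lambda) \neq 0 \}$, where the winding number is well defined off $f_T(S^1)$.

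The main obstacle is that both black boxes carry the genuine analytic weight. Coburn's lemma rests on Hardy-space structure, via an inner–outer factorisation (or an F. and M. Riesz argument) showing that a nonzero $H^2$ function cannot be annihilated from both sides by a symbol that is nonvanishing almost everywhere. The index formula is deeper still: the standard route first establishes stability of the Fredholm index under norm-continuous homotopies, then uses connectedness to deform a nonvanishing continuous symbol to a monomial $z^n$ with $n$ its winding number, and finally computes $\Ind T(z^n) = -n$ directly from the unilateral shift. Since these are exactly the contents of the cited result of Böttcher and Silbermann, within the paper I would simply quote them rather than reprove them.
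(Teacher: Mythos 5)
Your argument is correct and is precisely the standard proof underlying the reference the paper relies on for this lemma (B\"ottcher--Silbermann); the paper itself supplies no proof beyond that citation, so your reduction to Coburn's lemma and the Gohberg index theorem, with the index computed for $T(z^n)$ via the shift, is the intended route. One small correction: Coburn's lemma requires only that the symbol $f_T-\lambda$ be \emph{not almost everywhere zero} (rather than nonzero almost everywhere), and this weaker hypothesis is the one you actually need, since excluding only the degenerate case $f_T\equiv\lambda$ does not rule out symbols that vanish on a set of positive measure without vanishing identically.
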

    
    From \cref{lemma: specturm of toeplitz operator} we can see that the spectrum of a Toeplitz operator potentially has non-zero measure in the complex plane. It is therefore impossible that the spectrum of a sequence of Toeplitz matrices converges to the one of a Toeplitz operator in general. The following result  \cite[Theorem 7.3]{trefethen.embree2005Spectra} shows, however, that such a convergence result exists if we consider pseudospectra.
    
    \begin{lemma}\label{lemma: convergence of speudospectrum topelitz op}
        Let $T$ be a Toeplitz operator with continuous symbol $f_T$ and let $T_N$ be the truncation of $T_N$ to the upper-left $N\times N$ submatrix. Then for any $\epsilon>0$
        \begin{align*}
        \lim_{N\to\infty} \sp_\epsilon(T_N) = \sp_\epsilon(A),
        \end{align*}
        in the Hausdorff metric, meaning 
        \begin{align*}
            \sp_\epsilon(A) = \{z\in \C: \exists z_N \in\sp_\epsilon(T_N): z_N\to z \}.
        \end{align*}
    \end{lemma}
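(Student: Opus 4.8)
The plan is to reduce the Hausdorff convergence of the sets to pointwise convergence of the resolvent-norm functions $z\mapsto \Vert(T_N-z)\inv\Vert$, exploiting the third characterisation of the pseudospectrum, namely $\sp_\epsilon(A)=\{z\in\C:\Vert(A-z)\inv\Vert>\epsilon\inv\}$ with the convention $\Vert(A-z)\inv\Vert=+\infty$ for $z\in\sp(A)$. The heart of the matter, which I would isolate as the main lemma, is the assertion that for every $z\notin\sp(T)$ one has
\begin{align*}
\lim_{N\to\infty}\Vert(T_N-z)\inv\Vert=\Vert(T-z)\inv\Vert .
\end{align*}

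To prove this I would invoke the finite-section stability theory for Toeplitz operators (Gohberg--Feldman, Böttcher--Silbermann). For a continuous symbol $f_T$, the truncations $T_N-z$ form a \emph{stable} sequence --- eventually invertible with $\sup_N\Vert(T_N-z)\inv\Vert<\infty$ --- precisely when both $T-z$ and the transpose Toeplitz operator $\tilde T-z$ (symbol $t\mapsto f_T(1/t)$) are invertible. By \cref{lemma: specturm of toeplitz operator}, $\sp(\tilde T)=\sp(T)$, since reversing the symbol only flips the sign of the winding number $w(f_T,\cdot)$ whose nonvanishing is what produces the spectrum; hence stability holds exactly for $z\notin\sp(T)$. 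Since $\Vert A^T\Vert=\Vert A\Vert$ for any operator, we also get $\Vert(\tilde T-z)\inv\Vert=\Vert(T-z)\inv\Vert$, and the norm-of-inverse theorem for stable sequences, $\lim_N\Vert(T_N-z)\inv\Vert=\max\{\Vert(T-z)\inv\Vert,\Vert(\tilde T-z)\inv\Vert\}$, collapses to $\Vert(T-z)\inv\Vert$. This establishes the claim.

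With pointwise convergence in hand, the two inclusions follow. For the lower bound, if $z\in\sp_\epsilon(T)$ choose a unit vector $u\in\ell^2(\N)$ with $\Vert(T-z)u\Vert<\epsilon$ and approximate it by a finitely supported $u_M$; for $N\geq M$ the top-left truncation satisfies $(T_N-z)u_M=P_N(T-z)u_M\to(T-z)u_M$, where $P_N$ is the coordinate projection (the only discarded corner sits at the growing bottom edge), so $\Vert(T_N-z)u_M\Vert<\epsilon\Vert u_M\Vert$ for all large $N$ and thus $z\in\sp_\epsilon(T_N)$. For the upper bound, if $z\notin\overline{\sp_\epsilon(T)}$ then $\Vert(T-z)\inv\Vert<\epsilon\inv$, and because the resolvent-norm convergence is locally uniform on compact subsets of the resolvent set, $\Vert(T_N-z)\inv\Vert<\epsilon\inv$ for all large $N$ uniformly on a neighbourhood of $z$; hence no sequence $z_N\in\sp_\epsilon(T_N)$ can accumulate at $z$. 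Together these identify the set of limits of sequences $z_N\in\sp_\epsilon(T_N)$ with $\sp_\epsilon(T)$.

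The main obstacle is the upper bound, i.e. ruling out spurious near-singular behaviour of the finite sections: generic truncations of an operator can develop small singular values with no counterpart in the limit, and it is exactly the Toeplitz structure --- through the stability theorem together with the coincidence $\sp(\tilde T)=\sp(T)$ --- that forbids this. The lower bound, by contrast, is the soft direction, requiring only approximate eigenvectors and the fact that the upper-left truncation leaves the top edge of the operator intact.
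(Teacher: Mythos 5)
The paper does not prove this lemma: it is stated as background and attributed to \cite[Theorem 7.3]{trefethen.embree2005Spectra}, so there is no in-paper argument to compare against line by line. Your proof is essentially the standard finite-section argument from that literature (Gohberg--Feldman/B\"ottcher--Silbermann stability, the norm-of-inverse formula $\lim_N\Vert(T_N-z)\inv\Vert=\max\{\Vert(T-z)\inv\Vert,\Vert(\tilde T-z)\inv\Vert\}$, and the collapse of the maximum via $\sp(\tilde T)=\sp(T)$ and $\Vert A^{T}\Vert=\Vert A\Vert$), which is exactly the machinery the cited theorem rests on. The reduction to pointwise and then locally uniform convergence of the resolvent norms, and the identification of stability of the sections with simultaneous invertibility of $T-z$ and $\tilde T-z$, are correct.

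There is one genuine gap, in the lower bound. You write: ``if $z\in\sp_\epsilon(T)$ choose a unit vector $u$ with $\Vert(T-z)u\Vert<\epsilon$''. Such a $u$ need not exist: by the very characterisation of the pseudospectrum recalled in the paper, $z\in\sp_\epsilon(T)$ means $z\in\sp(T)$ \emph{or} such a $u$ exists, and on the residual part of the spectrum --- points $z$ with $w(f_T,z)>0$, where $T-z$ is injective, Fredholm of negative index and hence bounded below --- the infimum of $\Vert(T-z)u\Vert$ over unit vectors is strictly positive (the forward shift at $z=0$ is the basic example; for the gauge capacitance symbol this regime occurs when the sign of $\gamma$ is flipped). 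The fix stays entirely within your framework: since $(T_N-z)^{T}=(\tilde T)_N-z$ and a square matrix and its transpose have the same smallest singular value, $\sp_\epsilon(T_N)=\sp_\epsilon((\tilde T)_N)$; moreover, a bounded operator is invertible if and only if both it and its adjoint are bounded below, and transposition preserves these infima, so for every $z\in\sp(T)$ at least one of $T-z$, $\tilde T-z$ admits unit vectors $u$ with $\Vert(\cdot-z)u\Vert<\epsilon$. Running your truncation argument on whichever one does closes the gap. A smaller point: $z\notin\overline{\sp_\epsilon(T)}$ only yields $\Vert(T-w)\inv\Vert\le\epsilon\inv$ near $z$ rather than a strict inequality; this boundary subtlety is shared by the lemma as stated (an open set cannot literally equal a set of limits) and is resolved in the standard way by working with closures, so I would not count it against you.
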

    
    While \cref{lemma: convergence of speudospectrum topelitz op} describes the convergence of the pseudospectra of $T_N$, we also know how the corresponding eigenvectors behave. For example, we have the following lemma, from \cite[Section 7]{trefethen.embree2005Spectra}.
        
    \begin{lemma}\label{lemma: facts on eigenvectors teopliz}
        Let $T$ be a Toeplitz operator with continuous symbol $f_T$ and let $T_N$ be the truncation of $T$ to the upper-left $N\times N$ submatrix. Let $\lambda\in\C$  be such that $w(f_T,\lambda)<0$. Then, if $f_T$ is sufficiently smooth, the corresponding eigenvector decays exponentially. Furthermore, there exists some $M>1$ which is such that $\lambda$ is a $M^{-N}$ pseudoeigenvalue of $T_N$ with corresponding eigenvector $v_N$ satisfying
        \begin{align*}
        \frac{\vert v_N^{(j)}\vert}{\max_j \vert v_N^{(j)}\vert}\leq M^{-j},
        \end{align*}
        where $v_N^{(j)}$ is the $j$-th component of $v$. 
    \end{lemma}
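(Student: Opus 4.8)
The plan is to reduce everything to the banded setting that is actually relevant to us: the gauge capacitance matrix is tridiagonal, so its symbol $f_T$ is a Laurent (trigonometric) polynomial, and for such symbols both the existence of a decaying kernel vector and its decay rate can be read off from the associated recurrence. First I would record the index--winding dictionary. Since $w(f_T,\lambda)<0$ we have in particular $\lambda\notin f_T(S^1)$, so $T-\lambda$ is Fredholm, and by \cref{lemma: specturm of toeplitz operator} (equivalently, by the Gohberg--Krein theory of Toeplitz operators) its Fredholm index equals $-w(f_T,\lambda)>0$. Coburn's lemma guarantees that at most one of $\ker(T-\lambda)$ and $\operatorname{coker}(T-\lambda)$ is non-trivial, so the strictly positive index forces $\dim\ker(T-\lambda)=-w(f_T,\lambda)>0$. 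Hence $T-\lambda$ has a genuine eigenvector $v\in\ell^2(\N)$.

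Next I would extract the exponential decay from the banded structure. Writing $f_T(z)=\sum_{|k|\le r}a_kz^k$, any $v\in\ker(T-\lambda)$ satisfies the constant-coefficient recurrence $\sum_{|k|\le r}a_k v^{(i-k)}=\lambda v^{(i)}$ for all $i>r$, whose solution space is spanned by geometric modes $\zeta^i$. Membership in $\ell^2(\N)$ forces $v$ to be a finite combination of the decaying modes, those with $|\zeta|<1$. Because $\lambda\notin f_T(S^1)$ no mode sits on the unit circle, so setting $M^{-1}$ to be the largest modulus of a decaying mode gives $M>1$ and $|v^{(i)}|\le\kappa\,M^{-i}$. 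For the gauge capacitance matrix both off-diagonals are non-zero, so these decaying modes are genuinely present and the decay is exponential rather than that of a compactly supported vector; this proves the first assertion.

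Finally, for the truncation $T_N$ I would simply restrict this eigenvector, setting $v_N^{(j)}:=v^{(j)}$ for $1\le j\le N$. Because $(T-\lambda)v=0$ holds exactly, the vector $(T_N-\lambda)v_N$ is supported only in the last $r$ rows, where it equals the dropped tail contributions built from $v^{(N+1)},\dots,v^{(N+r)}$, and these are bounded by $\kappa\,M^{-N}$. Normalising so that $v^{(1)}\neq 0$ keeps $\|v_N\|$ bounded below by a constant, whence $\|(T_N-\lambda)v_N\|/\|v_N\|\le C\,M^{-N}$. Absorbing the constant into a slightly smaller base, this is precisely the statement that $\lambda$ is an $M^{-N}$-pseudoeigenvalue of $T_N$ with pseudomode $v_N$, and the pointwise bound $|v_N^{(j)}|/\max_j|v_N^{(j)}|\le M^{-j}$ is inherited from the geometric decay of $v$.

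I expect the main obstacle to be the passage from this exactly banded argument to a merely \emph{sufficiently smooth} symbol. There the characteristic equation is no longer a finite polynomial, so the clean mode analysis must be replaced either by a quantitative approximation of $f_T$ by Laurent polynomials --- controlling how both the winding number and the decay rate are preserved --- or by a direct symbol/WKB construction of a localised pseudomode, as in \cite{trefethen.embree2005Spectra}. One must additionally ensure that the approximation error does not destroy the lower bound on $\|v_N\|$ that converts the small residual into a genuine pseudospectral certificate.
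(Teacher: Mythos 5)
The paper does not actually prove this lemma: it is quoted verbatim from Trefethen and Embree (Section~7 of the cited monograph), so there is no internal argument to compare against. Your proposal is a correct, essentially self-contained proof in the banded case, which is the only case the paper uses (the gauge capacitance matrix is tridiagonal, so $f_T$ is a Laurent polynomial). The index bookkeeping is right --- note that a strictly positive Fredholm index already forces $\dim\ker(T-\lambda)>0$ without Coburn's lemma, which you only need if you want the exact equality $\dim\ker=-w(f_T,\lambda)$ --- and the recurrence/mode analysis correctly identifies the decay rate as the largest modulus of a root of $f_T(z)=\lambda$ lying in the relevant component, with $\lambda\notin f_T(S^1)$ ruling out unimodular modes. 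Two small repairs: repeated characteristic roots contribute modes of the form $j^m\zeta^j$, so $M$ must be taken slightly smaller than $1/\max|\zeta|$ to absorb the polynomial factor (and the constant $\kappa$, as you note); and "normalising so that $v^{(1)}\neq 0$" is not always possible --- what you actually need is that $\Vert v_N\Vert\to\Vert v\Vert>0$, which holds for $N$ large since $v$ is a fixed non-zero $\ell^2$ vector. You are also right that the jump from banded to merely ``sufficiently smooth'' symbols is the genuinely non-elementary part; that is precisely where the paper leans on the reference, which handles it via a wedge/WKB construction of localised pseudomodes rather than polynomial approximation. For the purposes of this paper your banded argument suffices, and it has the advantage of making the decay rate explicit in terms of the roots of the symbol.
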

    
    \addtocontents{toc}{\protect\setcounter{tocdepth}{1}}
    \subsection*{Tridiagonal Toeplitz matrices}\label{sec: tridig teoplitz theory}
    As the capacitance matrices we handle are tridiagonal, we focus now on the well understood theory of tridiagonal Toeplitz matrices. We will denote $N\times N$ tridiagonal Toeplitz matrices by their diagonal entries
    \begin{align*}
    T_N(a,b,c) \coloneqq \begin{pmatrix}
    b & c & & & &\\
    a & b & c &&&\\
    & \ddots & \ddots & \ddots &&\\
    & & a & b & c\\
    &&& a & b
    \end{pmatrix}.
    \end{align*}
    The eigendecomposition of tridiagonal Toeplitz matrices is well understood, as the following result \cite[Section 2]{noschese.pasquini.ea2013Tridiagonal} shows. 
    \begin{lemma}[Eigenvalues and eigenvectors of tridiagonal Toepliz matrices]\label{lemma: eigenpairs of tridiag toeplitz}
        The eigenvalues of $T_N(a,b,c)$ are given by
        \begin{align*}
            \lambda_k = b + 2\sqrt{ac}\cos\left(\frac{k}{N+1}\pi\right),\quad 1\leq k\leq N,
        \end{align*}
        and the corresponding eigenvectors
        \begin{align*}
        u_k^{(i)} = \left(\frac{a}{c}\right)^{\frac{i}{2}}\sin\left(\frac{ik}{N+1}\pi\right), \quad 1\leq k\leq n \text{ and } 1\leq i\leq N,
        \end{align*}
        where $u_k^{(i)}$ is the $i$-th coefficient of the $k$-th eigenvector.
    \end{lemma}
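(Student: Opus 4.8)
The plan is to solve the eigenvalue equation $T_N(a,b,c)\bm u = \lambda \bm u$ directly, reading it off row by row as a second-order linear recurrence. Writing $u^{(i)}$ for the components and adopting the convention $u^{(0)} = u^{(N+1)} = 0$ forced by the first and last rows, the interior equations read
\[
a\, u^{(i-1)} + (b-\lambda)\, u^{(i)} + c\, u^{(i+1)} = 0, \qquad 1 \leq i \leq N.
\]
First I would treat this as a constant-coefficient recurrence and substitute the ansatz $u^{(i)} = r^i$, producing the characteristic equation $c r^2 + (b-\lambda) r + a = 0$. Its two roots $r_1, r_2$ obey $r_1 r_2 = a/c$ and $r_1 + r_2 = (\lambda - b)/c$; assuming $ac \neq 0$ so that both roots are nonzero, the general solution is $u^{(i)} = A r_1^i + B r_2^i$.

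Next I would impose the boundary conditions. The condition $u^{(0)} = 0$ forces $B = -A$, giving $u^{(i)} = A(r_1^i - r_2^i)$, and then $u^{(N+1)} = 0$ requires $(r_1/r_2)^{N+1} = 1$. Since the product $r_1 r_2 = a/c$ is fixed, I would parametrise $r_{1,2} = \sqrt{a/c}\, e^{\pm \i \theta}$ with a fixed branch of the square root, so that the quantisation condition becomes $e^{2\i(N+1)\theta} = 1$, i.e.\ $\theta = \theta_k := k\pi/(N+1)$. The values $k = 1, \dots, N$ furnish the $N$ distinct admissible angles, while $k=0$ and $k=N+1$ are discarded as they yield the trivial solution.

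With $\theta_k$ fixed, the eigenvalue follows from $\lambda = b + c(r_1 + r_2) = b + 2c\sqrt{a/c}\,\cos\theta_k = b + 2\sqrt{ac}\,\cos\!\big(k\pi/(N+1)\big)$, and the eigenvector from $u^{(i)} = A(r_1^i - r_2^i) = 2\i A\, (a/c)^{i/2}\sin(i\theta_k)$; absorbing the constant $2\i A$ into the normalisation recovers the stated formula $u_k^{(i)} = (a/c)^{i/2}\sin\!\big(ik\pi/(N+1)\big)$.

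The main obstacle is the degenerate regime in which the two characteristic roots coincide, i.e.\ $(b-\lambda)^2 = 4ac$: there the ansatz spans only a one-dimensional solution space and the argument above breaks down. One must either check that such $\lambda$ do not occur for $k$ in the admissible range---equivalently, that the $N$ computed values exhaust the spectrum and are simple when $ac\neq 0$---or treat the defective case separately. A secondary point requiring care is the branch choice for $\sqrt{ac}$ and $(a/c)^{1/2}$ when $a$ and $c$ are complex or of opposite sign: the eigenvalues are invariant under the swap $r_1 \leftrightarrow r_2$ and hence branch-independent, but the eigenvector formula depends on a consistent choice, which I would fix once at the outset.
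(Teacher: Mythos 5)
Your argument is correct and is essentially the standard recurrence-plus-boundary-conditions derivation that the cited reference (Noschese--Pasquini--Reichel) uses; the paper itself states this lemma without proof, deferring entirely to that citation. The degenerate-root worry you flag resolves immediately: for $k=1,\dots,N$ the angles $\theta_k$ lie in $(0,\pi)$ so $r_1\neq r_2$, and the $N$ values $\cos(k\pi/(N+1))$ are distinct, so (for $ac\neq 0$) your list already exhausts the spectrum with simple eigenvalues.
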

    The spectral theory of tridiagonal matrices that are almost Toeplitz has been object of many studies. In this work we are interested in $N\times N$ matrices with the modified Toeplitz structure 
    \begin{align*}
        \tilde{T}_N(a,b,c) \coloneqq \begin{pmatrix}
        b+a & c & & & &\\
        a & b & c &&&\\
        & \ddots & \ddots & \ddots &&\\
        & & a & b & c\\
        &&& a & b+c
        \end{pmatrix}.
        \end{align*}
    For matrices of the type $\tilde{T}_N(a,b,c)$ the spectral theory is fully understood as well, as the following lemma \cite[Theorem 3.1]{yueh.cheng2008Explicit} shows.
    \begin{lemma}\label{lemma: eigenpairs tilde T n}
        Suppose that $ac\neq 0$, $a+b+c=0$ and let $\mu$ be an eigenvalue of $\tilde{T}_N(a,b,c)$ then either $\mu= \mu_1:=0$ and the corresponding eigenvector is $v_1=\bm 1$ or
        \begin{align*}
        \mu= \mu_k := b+2\sqrt{ac}\cos\left(\frac{\pi}{N} (k-1)\right), \quad 2\leq k\leq N,
        \end{align*}
        and the corresponding eigenvector 
        \begin{align*}
            v_k^{(j)} = \left(\frac{a}{c}\right)^{\frac{j-1}{2}}\left(a\sin\left(\frac{i (k-1) \pi}{N}\right) - a\sqrt{\frac{a}{c}}\sin\left(\frac{(j-1) (k-1) \pi}{N}\right)\right), 
            \end{align*}
            for $2\leq k\leq N \text{ and } 1\leq j\leq N.$
    \end{lemma}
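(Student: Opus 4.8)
The plan is to treat the eigenvalue equation $\tilde{T}_N(a,b,c)\bm v = \mu \bm v$ as a linear three-term recurrence supplemented by two boundary rows, and to solve it explicitly. First I would dispose of the trivial eigenpair: the hypothesis $a+b+c=0$ forces every row of $\tilde{T}_N(a,b,c)$ to sum to zero (the interior rows sum to $a+b+c$, the first to $(b+a)+c$, the last to $a+(b+c)$), so $\tilde{T}_N(a,b,c)\bm 1 = \bm 0$ and $(\mu_1,\bm v_1)=(0,\bm 1)$ is an eigenpair.

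For the remaining spectrum, I would write the $j$-th component of $\tilde{T}_N(a,b,c)\bm v=\mu\bm v$ as $a v^{(j-1)}+b v^{(j)}+c v^{(j+1)}=\mu v^{(j)}$ for the interior indices $2\le j\le N-1$. The two anomalous corner entries $b+a$ and $b+c$ are then absorbed by introducing ghost unknowns $v^{(0)},v^{(N+1)}$ and observing that the first and last rows become instances of the same interior recurrence precisely under the reflecting (Neumann-type) conditions $v^{(0)}=v^{(1)}$ and $v^{(N+1)}=v^{(N)}$. The interior recurrence has characteristic equation $c z^2+(b-\mu)z+a=0$, whose roots satisfy $z_1 z_2 = a/c$; writing $z_{1,2}=\sqrt{a/c}\,e^{\pm\i\theta}$ gives the dispersion relation $\mu=b+2\sqrt{ac}\cos\theta$ together with the general solution $v^{(j)}=A z_1^{\,j}+B z_2^{\,j}$.

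Next I would impose the two boundary conditions on $v^{(j)}=A z_1^{\,j}+B z_2^{\,j}$, obtaining a homogeneous $2\times 2$ system in $(A,B)$. Eliminating $A$ using the first condition and substituting into the second, the solvability determinant factors as a nonzero multiple of $(z_2-1)(z_1^{\,N}-z_2^{\,N})$. This produces exactly the two families in the statement: the factor $z_2=1$ forces $z_1=a/c$ and hence $\mu=b+a+c=0$, recovering the constant eigenvector already found; the factor $z_1^{\,N}=z_2^{\,N}$, i.e. $(z_1/z_2)^N=e^{2\i N\theta}=1$, quantises $\theta=(k-1)\pi/N$ and yields $\mu_k=b+2\sqrt{ac}\cos\big((k-1)\pi/N\big)$ for $2\le k\le N$. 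Back-substituting $A/B$ from the first boundary relation into $v^{(j)}$, pulling out the factor $(a/c)^{(j-1)/2}$ and converting the exponentials into sines gives the stated eigenvector $\bm v_k$.

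I expect the main obstacle to be the careful bookkeeping of the degenerate cases and the counting argument. The parametrisation $\theta=(k-1)\pi/N$ nominally allows $k-1\in\{0,1,\dots,N\}$, but the endpoints $\theta=0,\pi$ make the characteristic roots coincide, so the ansatz $A z_1^{\,j}+B z_2^{\,j}$ degenerates and must be examined separately; one must check that $\theta=0$ contributes no genuine eigenvector and that the constant mode $\mu_1=0$ is not duplicated among the $\mu_k$. The latter follows from $a+b+c=0$, since $-b/(2\sqrt{ac})=(a+c)/(2\sqrt{ac})\ge 1>\cos\big((k-1)\pi/N\big)$ for $2\le k\le N$, so that $0\notin\{\mu_k\}_{k\ge 2}$; together with the $N-1$ distinct values from the quantisation this accounts for all $N$ eigenvalues. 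The remaining work is the purely algebraic simplification of $A z_1^{\,j}+B z_2^{\,j}$ into the closed form for $\bm v_k$, which is routine once the identity $z_1 z_2=a/c$ is used throughout.
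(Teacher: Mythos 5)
Your argument is correct, and it is worth noting that the paper offers no proof of this lemma at all: the statement is imported verbatim from \cite[Theorem 3.1]{yueh.cheng2008Explicit}, where it is established via a symbolic (shift-operator) calculus for difference equations. Your derivation is therefore a genuinely self-contained alternative, in the classical transfer-matrix style: absorbing the perturbed corners into the ghost conditions $v^{(0)}=v^{(1)}$, $v^{(N+1)}=v^{(N)}$ makes every row an instance of the same three-term recurrence, and the boundary system's determinant factors exactly as $(1-z_1)(1-z_2)\bigl(z_1^N-z_2^N\bigr)$ --- so your ``nonzero multiple of $(z_2-1)(z_1^N-z_2^N)$'' tacitly assumes $z_1\neq 1$, which is harmless since either unit root forces, via $z_1z_2=a/c$ and $\mu=b+c(z_1+z_2)$, the same conclusion $\mu=a+b+c=0$. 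The quantisation $\theta=(k-1)\pi/N$ and the back-substituted eigenvector $v^{(j)}\propto (a/c)^{(j-1)/2}\bigl(\sin(j\theta)-\sqrt{a/c}\,\sin((j-1)\theta)\bigr)$ reproduce the stated formulas up to the irrelevant nonzero factor $a$; your computation incidentally confirms that the stray index $i$ in the lemma's eigenvector formula is a typo for $j$. Your counting step is also the right way to close the argument, and in fact it renders a separate analysis of the degenerate endpoints $\theta\in\{0,\pi\}$ unnecessary: once $\mu_1=0$ and the $N-1$ mutually distinct $\mu_k$ are exhibited with nonzero eigenvectors (note $v^{(1)}=\sqrt{a/c}\,\sin\theta\neq 0$), an $N\times N$ matrix has no room for further spectrum. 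One caveat: the inequality $(a+c)/(2\sqrt{ac})\geq 1$ that you use to separate $0$ from the $\mu_k$ is AM--GM and requires $a,c>0$; in the paper's actual application to $\capmatg$ with $\gamma>0$ one has $a,c<0$, where the parallel estimate $(a+c)/(2\sqrt{ac})\leq -1$ gives the separation just as well, but for genuinely complex $a,c$ (for instance $a/c=e^{2\i m\pi/N}$) the value $0$ does coincide with some $\mu_k$. This non-generic degeneracy is silently excluded by the cited statement too, so you should either restrict to $a,c$ real of one sign (as in the application) or state the genericity hypothesis explicitly.
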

    It should be noted that the condition $a+b+c=0$ guarantees that $\mu_1=0$.
    \begin{proposition}
        Let $N\in \N$ and $(\lambda_{i,N},u_{i,N})$ for $1\leq i\leq N$ and $(\mu_{i,N},v_{i,N})$ for $2\leq i \leq N$ be the eigenpairs of $T_N(a,b,c)$ and $\tilde{T}_N(a,b,c)$ respectively so that the conditions of \cref{lemma: eigenpairs tilde T n} are satisfied. Then there exists an injective map $\sigma_N:\{2,\dots,N\}\to\{1,\dots,N\}$ such that for any $i=2,\dots,N,$
        \begin{align*}
            \abs{\mu_{\sigma_N(i),N}-\lambda_{i,N}}\to 0\quad \text{as}\quad N\to \infty
        \end{align*}
        and 
        \begin{align*}
            \Vert v_{\sigma_N(i),N}-u_{i,N}\Vert \to 0\quad \text{as}\quad N\to \infty.
        \end{align*}
    \end{proposition}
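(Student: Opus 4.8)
The plan is to argue directly from the closed-form eigenpairs in \cref{lemma: eigenpairs of tridiag toeplitz} and \cref{lemma: eigenpairs tilde T n}, using the fact that the eigenvalues of both $T_N(a,b,c)$ and $\tilde T_N(a,b,c)$ lie on the single curve $\theta\mapsto b+2\sqrt{ac}\cos\theta$, merely sampled on two slightly different angle grids, and that the corresponding eigenvectors share the same geometric envelope $(a/c)^{(\cdot)/2}$ (up to a component-independent constant) modulating a discrete sine. Accordingly I would take the near-identity matching $\sigma_N(i)=i$. This is trivially injective from $\{2,\dots,N\}$ into $\{1,\dots,N\}$; it leaves unmatched precisely the edge eigenvalue $\lambda_{1,N}$ of $T_N$ and the special eigenvalue $\mu_{1,N}=0$ of $\tilde T_N$, which are the two eigenvalues with no natural partner.

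First I would dispatch the eigenvalue statement. Writing $\lambda_{i,N}=b+2\sqrt{ac}\cos\!\bigl(\tfrac{i\pi}{N+1}\bigr)$ and $\mu_{i,N}=b+2\sqrt{ac}\cos\!\bigl(\tfrac{(i-1)\pi}{N}\bigr)$, the two angles differ by
\[
\frac{i\pi}{N+1}-\frac{(i-1)\pi}{N}=\frac{(N+1-i)\,\pi}{N(N+1)},
\]
which is at most $\pi/N$ for every $i\in\{2,\dots,N\}$. Since $\cos$ is $1$-Lipschitz on $\R$, this gives the uniform bound $\abs{\mu_{i,N}-\lambda_{i,N}}\le 2\abs{\sqrt{ac}}\,\pi/N\to0$. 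This is essentially a one-line computation once $\sigma_N$ is fixed.

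The real content is the eigenvector convergence, which I would attack after normalising both eigenvectors and fixing their signs consistently. Factoring out the common envelope $(a/c)^{(\cdot)/2}$---identical for $u_{i,N}$ and $v_{\sigma_N(i),N}$ up to a $j$-independent constant, hence immaterial after normalisation---reduces the task to comparing two $\ell^2$-normalised weighted sinusoids whose spatial frequencies $\tfrac{i\pi}{N+1}$ and $\tfrac{(i-1)\pi}{N}$ again differ by $O(1/N)$. Equivalently, one may note that $u_{i,N}$ and $v_{\sigma_N(i),N}$ solve the same three-term recurrence $a w_{j-1}+bw_j+cw_{j+1}=\mu w_j$ in the bulk and differ only through the boundary conditions encoded in the corner entries of $\tilde T_N$; I would then estimate the componentwise difference using the $O(1/N)$ frequency gap together with a uniform control of the envelope-weighted partial sums.

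I expect this last step to be the main obstacle, for two intertwined reasons. When $\abs{a}\neq\abs{c}$ the envelope $(a/c)^{j/2}$ concentrates each eigenvector's mass exponentially in a boundary layer at one endpoint, so the $\ell^2$-norm is dominated by $O(1)$ components and crude componentwise bounds are far too lossy; one must track carefully how the normalisation interacts with this weighting. Moreover, the extra sine term in the $\tilde T_N$ eigenvector---produced by the modified corners $b+a$ and $b+c$---introduces an $N$- and $i$-dependent phase relative to the clean $T_N$ profile, and one has to show that, restricted to the thin region carrying the mass, this phase is asymptotically negligible after normalisation. Establishing these boundary-layer and phase estimates uniformly in $i$ is where the genuine work lies, whereas the construction of $\sigma_N$ and the eigenvalue bound are immediate.
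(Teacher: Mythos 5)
Your overall route coincides with the paper's: the paper's entire proof is the remark that the result is ``now trivial using the explicit formulations'' of \cref{lemma: eigenpairs of tridiag toeplitz,lemma: eigenpairs tilde T n}, and you likewise work directly from those formulas with the identity matching $\sigma_N(i)=i$. The eigenvalue half of your argument is complete and correct: the angle gap $\frac{i\pi}{N+1}-\frac{(i-1)\pi}{N}=\frac{(N+1-i)\pi}{N(N+1)}\leq\frac{\pi}{N}$ together with the Lipschitz bound for the cosine gives $\abs{\mu_{i,N}-\lambda_{i,N}}\leq 2\abs{\sqrt{ac}}\,\pi/N$ uniformly in $i$, which is more detail than the paper records.

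The gap is the eigenvector half: you reduce it to ``boundary-layer and phase estimates'' and then stop, so the convergence $\Vert v_{\sigma_N(i),N}-u_{i,N}\Vert\to0$ is never established. Moreover, the obstacle you flag is genuine and is not removable by the $O(1/N)$ frequency-gap heuristic. Writing $r=\sqrt{a/c}$ and taking, say, $r<1$ (for $\capmatg$ one has $r=e^{-\gamma\ell/2}$), both normalised eigenvectors concentrate on an $O(1)$ boundary layer at $j=1,2,\dots$; for fixed $i$ the explicit formulas give the limiting componentwise profiles $j\mapsto jr^{j}$ for $u_{i,N}$ and $j\mapsto r^{j}\bigl(j-r(j-1)\bigr)$ for $v_{\sigma_N(i),N}$, whose ratio $1-r+\frac{r}{j}$ is not constant in $j$. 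Hence the two unit vectors do not become collinear: the extra sine term produced by the modified corners of $\tilde T_N$ contributes at leading order exactly where the $\ell^2$ mass sits, so no choice of normalisation or sign makes the difference tend to zero componentwise in the boundary layer. This step therefore cannot be completed as you outline it (nor is it addressed by the paper's one-line proof); at a minimum one must fix a normalisation convention and measure the discrepancy in a way that is insensitive to the boundary layer, for instance by comparing the vectors only in the bulk or in a suitably weighted norm.
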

    \begin{proof}
        This is now trivial using the explicit formulations from \cref{lemma: eigenpairs of tridiag toeplitz,lemma: eigenpairs tilde T n}.
    \end{proof}
    Despite not having a surjective mapping, there is only one eigenvalue of the perturbed matrix that cannot be approximated via the Toeplitz matrix and this eigenpair is known explicitly ($\mu_1=0$, $v_1=\bm 1$).

\section{Dimer systems of subwavelength resonators with imaginary gauge potentials }\label{sec: dimers}
In this appendix, we briefly present some numerical illustrations of the skin effect in a system of finitely many dimers. \Cref{fig:skin modes dimers} shows the eigenvectors in a system of subwavelength resonator dimers with imaginary gauge potential, which are clearly strongly localised at one edge. This is a skin effect analogue to the simpler case studied in \cref{sec: skin effect}, with the corresponding modes plotted in \cref{fig:skin modes}. Then, \Cref{fig:skin pseudospectrum dimers} is the analogue of \cref{fig: winding and pseudospecturm}, which shows that the spectrum of repeated dimers presents a bipartition and remains purely real.

\begin{figure}[htb]
    \centering
    \begin{subfigure}[t]{0.48\textwidth}
    \includegraphics[width=\textwidth]{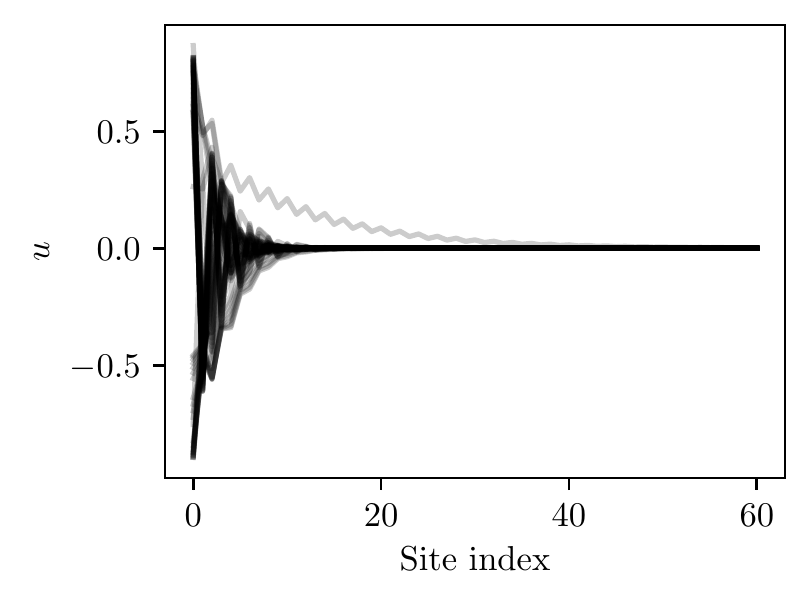}
    \caption{Eigenvector localisation showing skin effect. All modes are exponentially decaying and compensated on the left-hand side because of $\gamma>0$.}
    \label{fig:skin modes dimers}
    \end{subfigure}
    \hfill
    \begin{subfigure}[t]{0.48\textwidth}
        \includegraphics[width=\textwidth]{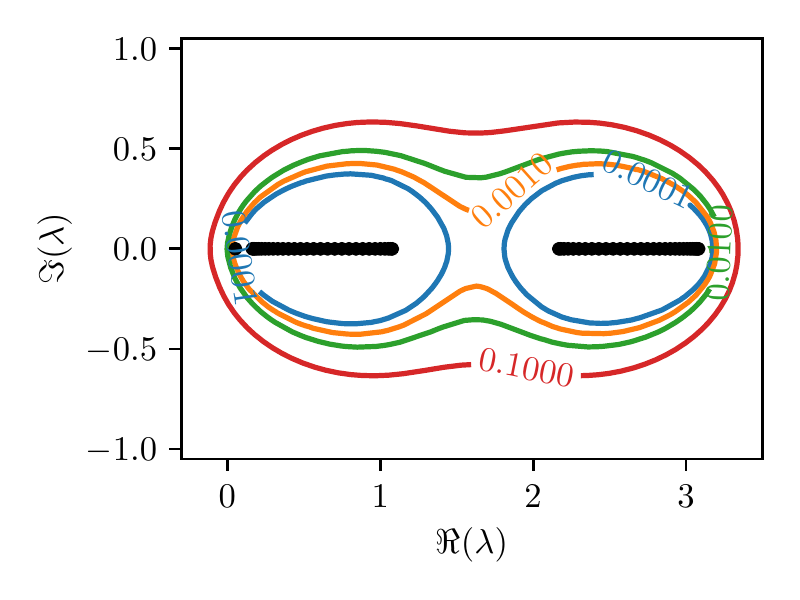}
        \caption{Eigenvalues $\lambda_i$ (bullets, purely real) and $\epsilon$-pseudospectra (boundary lines) for $\epsilon=10^{-k}$ for $k=0,\dots,4$.}
        \label{fig:skin pseudospectrum dimers}
    \end{subfigure}
    \caption{Eigenmodes and spectrum for a system of $N=61$ dimers with $\ell=1$, $s_1=1$ and $s_2=2$ and $\gamma=1$.}
\end{figure}

\section{Generalised discrete Brillouin zone for dimer systems}\label{sec: approx band}

In \cref{sec: complex bands}, we have shown that in order to describe an infinite array of subwavelength resonators with imaginary gauge potential, the correct quasiperiodicity space to use lies in a region of the complex plane. This generalised Brillouin zone is needed in order to take into account decaying eigenmodes. In this appendix, we first show that this is indeed a generalisation of the standard (real) Brillouin zone. In \cref{fig: computed alphas herm 1 res,fig: Computed bands hermitian 1 res} we consider a large system of equally spaced identical Hermitian resonators ($\gamma=0$). \cref{fig: computed alphas herm 1 res} shows the real and imaginary part of quasiperiodicities computed as described in \cref{sec: complex bands}. We notice that despite the optimisation region is a region of the complex plane, the resulting quasifrequencies are real as expected. \cref{fig: Computed bands hermitian 1 res} compares the computed discrete band structure and the actual band structure for the periodic case. There is very close agreement between them.

Then we compute the same for a large system of Hermitian ($\gamma=0$ and real material parameters) dimers. In this case, due to the presence of multiple bands, we slightly modify the procedure of \cref{sec: complex bands}. In this case we consider the following minimisation:
\begin{align*}
    \alpha_j \coloneqq \argmin_{\alpha\in \C} \vert \omega_j - \upomega^\alpha \vert,
\end{align*}
where $\upomega^\alpha$ is one of the two eigenvalues of the corresponding quasiperiodic capacitance matrix. We notice again that, as expected, the quasiperiodicities are real despite an optimisation occurring over $\C$ (\cref{fig: computed alphas herm 2 res}) and that the band functions are recovered accurately (\cref{fig: Computed bands hermitian 2 res}). 

To exemplify the approach, we take a system of dimers with complex material parameters $\kappa_b$ and $\rho_b$ (and $\gamma=0$). In this case, we optimise again with respect to one of the two eigenvalues of the corresponding quasiperiodic capacitance matrix. Also, as shown by \cref{fig: computed alphas non herm 2 res,fig: Computed bands non hermitian 2 res}, the quasiperiodicities are real despite our optimisation procedure is occurring over $\C$ and the band functions are recovered accurately. 


\begin{figure}
    \centering
    \begin{subfigure}[t]{0.48\textwidth}
        \centering
        \includegraphics[width=\textwidth]{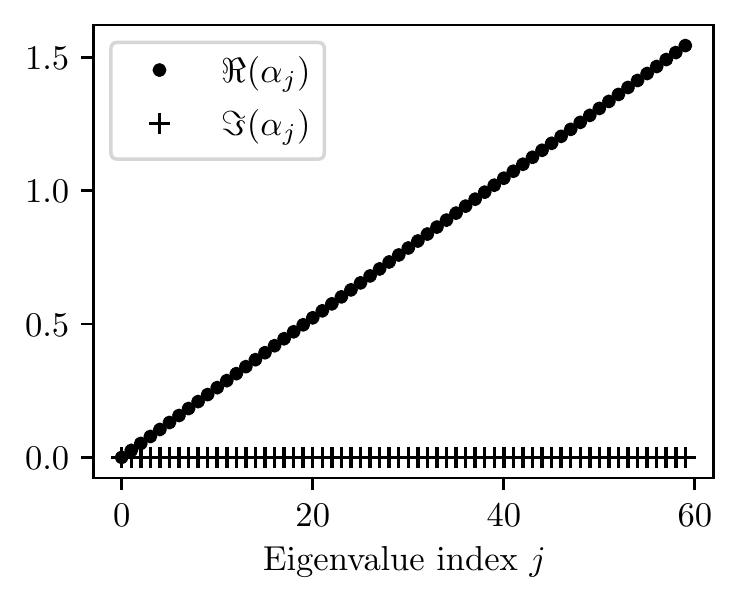}
        \caption{}
        \label{fig: computed alphas herm 1 res}
    \end{subfigure}
    \hfill
    \begin{subfigure}[t]{0.48\textwidth}
        \centering
        \includegraphics[width=\textwidth]{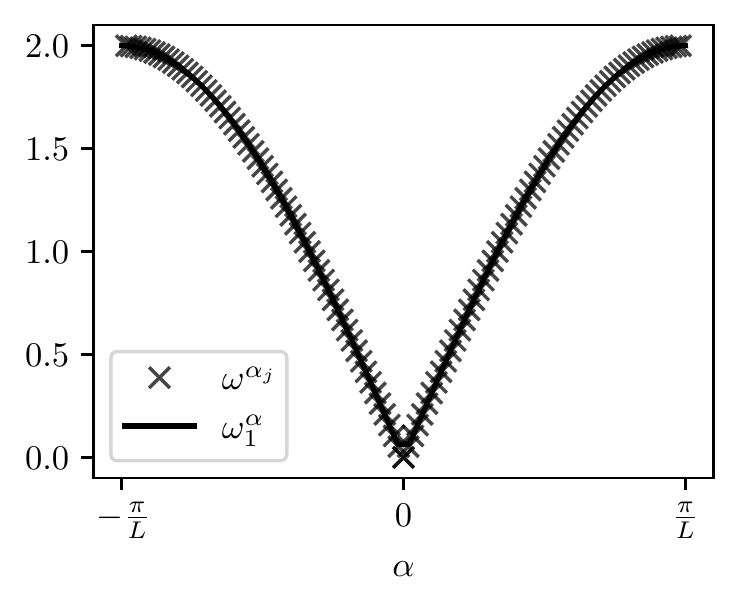}
        \caption{
        }
        \label{fig: Computed bands hermitian 1 res}
    \end{subfigure}\\

    \begin{subfigure}[t]{0.48\textwidth}
        \centering
        \includegraphics[width=\textwidth]{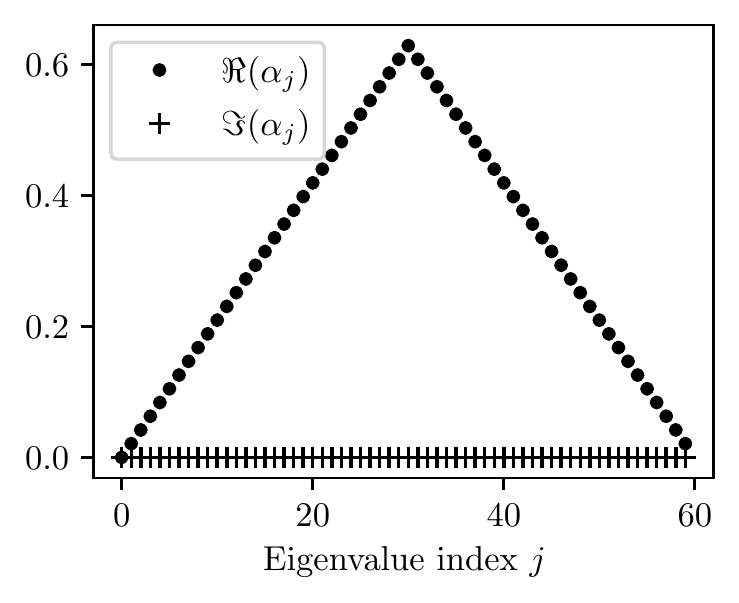}
        \caption{
        }
        \label{fig: computed alphas herm 2 res}
    \end{subfigure}
    \hfill
    \begin{subfigure}[t]{0.48\textwidth}
        \centering
        \includegraphics[width=\textwidth]{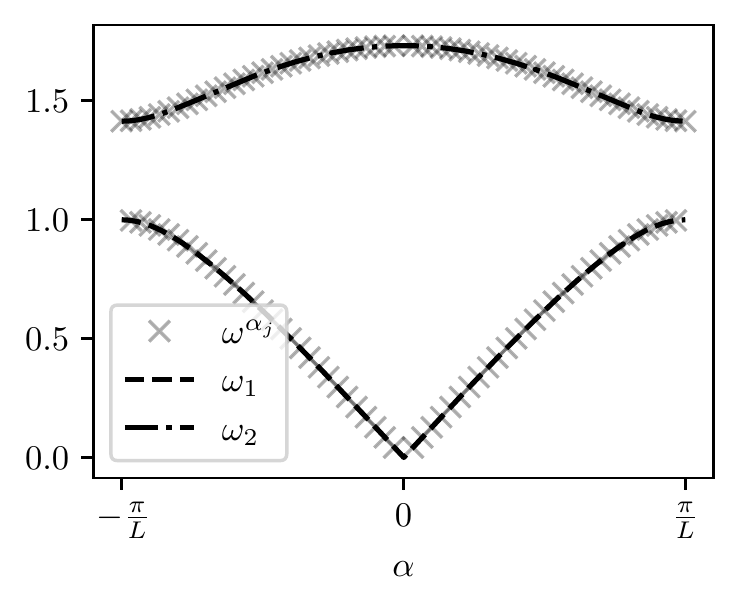}
        \caption{
        }
        \label{fig: Computed bands hermitian 2 res}
    \end{subfigure}\\
    \begin{subfigure}[t]{0.48\textwidth}
        \centering
        \includegraphics[width=\textwidth]{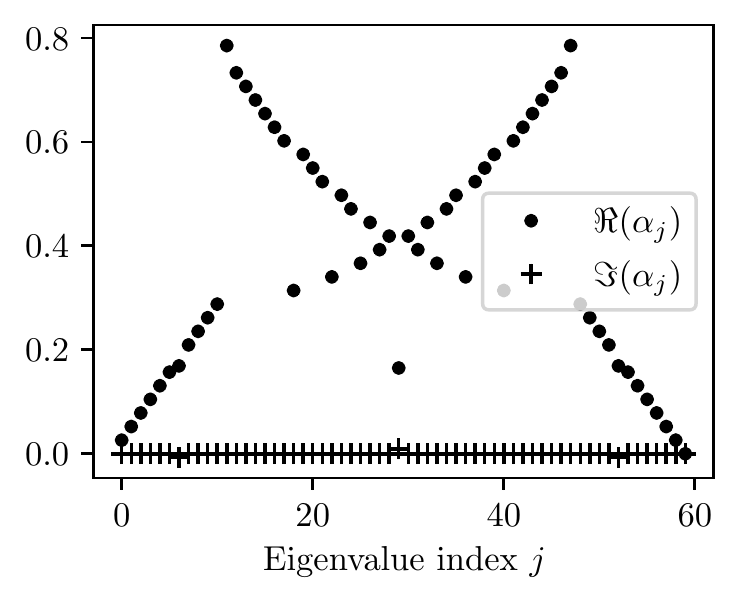}
        \caption{
        }
        \label{fig: computed alphas non herm 2 res}
    \end{subfigure}
    \hfill
    \begin{subfigure}[t]{0.48\textwidth}
        \centering
        \includegraphics[width=\textwidth]{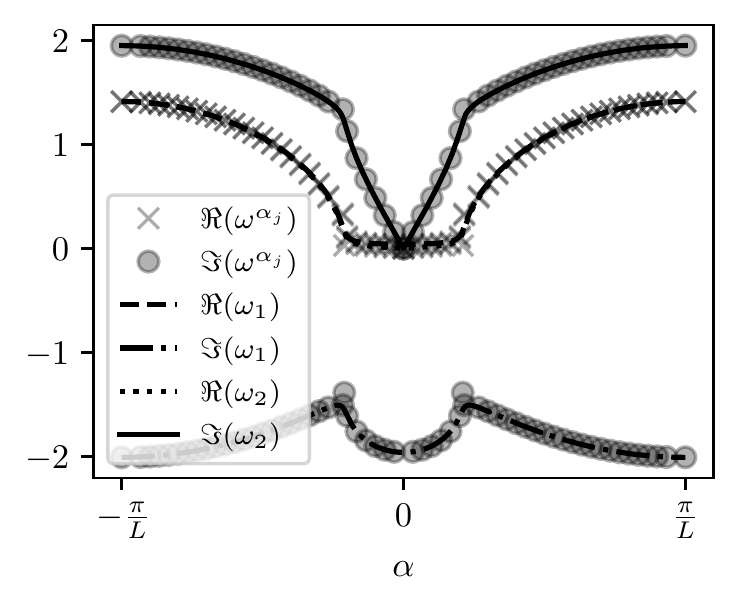}
        \caption{
        }
        \label{fig: Computed bands non hermitian 2 res}
    \end{subfigure}
    \caption{Discrete band approximation for various systems. \cref{fig: computed alphas herm 1 res,fig: Computed bands hermitian 1 res} show a system of $N=60$ Hermitian equally spaced identical resonators with $\ell=s=1$ and $\gamma=0$. \cref{fig: computed alphas herm 2 res,fig: Computed bands hermitian 2 res} show a system of $N=60$ Hermitian dimers with $\ell=1$, $s_1=1$, $s_2=2$ and $\gamma=0$. \cref{fig: computed alphas non herm 2 res,fig: Computed bands non hermitian 2 res} show a system of $N=60$ non-Hermitian dimers with complex material parameters $v_1=1+1.38\i$, $v_2=1-1.42\i$, $\ell=1$, $s=1$ and $\gamma=0$.}
    \label{fig: 6 figs generalised Brillouin zone}
\end{figure}

\section{Systems with complex material parameters and imaginary gauge transformations}
\label{sec: PT}

In this appendix, we highlight the fundamental difference between the current work and previous work on subwavelength non-Hermitian systems \cite{ammari.hiltunen2020Edge}. We show that systems with non-Hermiticity arising from complex material parameters are indeed, how \cref{fig: 6 figs generalised Brillouin zone} hits to, reducible to Hermitian systems away from
exceptional points. This implies that the  vorticity $\nu$ defined by the same formula as in (\ref{defnu})  is trivial, see \cite{ammari.hiltunen2020Edge}.

We consider the following ordinary differential equation (ODE):
\begin{align} \label{ODE}
    \psi\prii + VC \psi = 0,
\end{align}
where $V$ is a diagonal matrix encoding the (complex) material parameters and $C$ the (Hermitian) capacitance matrix. We assume that $VC$ is diagonalisable and invertible, i.e., we are away from
exceptional points. Equation (\ref{ODE}) describes the set of subwavelength eigenfrequencies. After a change of basis we can thus assume $VC = D = \diag(\lambda_1,\lambda_2)$. Introduce the transformation
\begin{align*}
G: \R &\to \R^2\\
t&\mapsto (\lambda_1^{-\frac{1}{2}}t,\lambda_2^{-\frac{1}{2}}t)
\end{align*}
and then consider
\begin{align*}
\phi(t) = \psi \circ G(t),
\end{align*}
which satisfies
\begin{align*}
    \phi(t)\prii = D\inv\cdot(\psi\prii \circ G(t)).
\end{align*}
Therefore, $\phi(t)$ satisfies the Hermitian ODE
\begin{align*}
    \phi\prii + \phi = 0
\end{align*}
as
\begin{align*}
    D(\phi\prii(t) + \phi(t)) = D D\inv(\psi\prii \circ G(t)) + D (\psi \circ G(t)) = (\psi\prii + D\psi)\circ G(t) = 0.
\end{align*}

\printbibliography

\end{document}
